\newif\ifpersonal
\theoremstyle{plain}
\newtheorem{proposition}{Proposition}[section]
\newtheorem{theorem}[proposition]{Theorem}
\newtheorem*{theorem*}{Theorem}
\newtheorem{corollary}[proposition]{Corollary}
\newtheorem*{corollary*}{Corollary}
\newtheorem{conjecture}[proposition]{Conjecture}
\newtheorem{question}[proposition]{Question}
\theoremstyle{definition}
\newtheorem{definition}[proposition]{Definition}
\newtheorem{remark}[proposition]{Remark}
\newtheorem{example}[proposition]{Example}
\newtheorem{notation}[proposition]{Notation}
\newcommand*{\personal}[1]{\textcolor[rgb]{.2,.2,1}{\tiny{(Personal: #1)}}}
\newcommand*{\todo}[1]{{\tiny\textcolor{red}{(Todo: #1)}}}
\newcommand*{\personal}[1]{\ignorespaces}
\newcommand*{\todo}[1]{\ignorespaces}
				\string\usetikzlibrary{decorations.markings} to use arrows with markings}{}}{}%
\newcommand{\rH}{\mathrm H}
\newcommand{\fc}{\mathfrak{c}}
\newcommand{\cC}{\mathcal C}
\newcommand{\cD}{\mathcal D}
\newcommand{\cE}{\mathcal E}
\newcommand{\cF}{\mathcal F}
\newcommand{\cH}{\mathcal H}
\newcommand{\cM}{\mathcal M}
\newcommand{\cO}{\mathcal O}
\newcommand{\cP}{\mathcal P}
\newcommand{\cT}{\mathcal T}
\newcommand{\bZ}{\mathbb{Z}}							  				
\newcommand{\bN}{\mathbb{N}}
\newcommand{\bA}{\mathbb{A}}						
\newcommand{\bF}{\mathbb{F}}
\newcommand{\bQ}{\mathbb{Q}}
\newcommand{\bQb}{{\overline{\bQ}}}
\newcommand{\bFb}{{\overline{\bF}}}
\newcommand{\bC}{\mathbb{C}}
\newcommand{\bR}{\mathbb{R}}
\newcommand{\bG}{\mathbb{G}}
\newcommand{\Rep}{\operatorname{Rep}}
\newcommand{\rHB}{\rH_{\mathrm{Betti}}}
\newcommand{\one}{\mathbbm{1}}
\newcommand{\Mot}{{\cM ot}}
\newcommand{\Mat}{{\cM at}}
\newcommand{\Motgr}{{\Mot(\bQb)^\textrm{gr}}}
\newcommand{\MotgrQ}{{\Mot(\bQb)_{\bQ}^\textrm{gr}}}
\newcommand{\Vect}{\mathrm{Vect}}
\newcommand{\obF}{{\overline{\bF}}}
\newcommand{\bQl}{{\bQ_\ell}}
\newcommand{\obQp}{{\overline{\bQ}_p}}
\newcommand{\obQ}{{\overline{\bQ}}}
\newcommand{\cl}{\mathsf{cl}}
\newcommand{\DM}{\mathrm{DM}}
\newcommand{\dR}{\mathrm{dR}} 
\newcommand{\crys}{\mathrm{crys}}
\newcommand{\num}{\mathrm{num}}
\newcommand{\ho}{\mathrm{hom}}
\newcommand{\eval}{\mathrm{eval}}
\newcommand{\fcB}{\mathfrak{c}_{\mathrm{Bert}}}
\newcommand{\cAp}{\mathcal{P}^{\mathrm{mot}}_p}
\newcommand{\Forg}{\mathrm{Forg}}
\newcommand{\alg}{\mathrm{alg}}
\newcommand{\et}{_\mathrm{\acute{e}t}}
\newcommand{\oset}[3][0.2ex]{%
	\mathrel{\mathop{#3}\limits^{
			\vbox to#1{\kern-2\ex@
				\hbox{$\scriptstyle#2$}\vss}}}}
\newcommand{\<}{\langle}
\renewcommand{\>}{\rangle}
\newcommand{\gr}{\mathsf{gr}}
\newcommand{\ad}{\mathsf{ad}}
\newcommand{\inv}{^{-1}}
\tikzset{
  closed/.style = {decoration = {markings, mark = at position 0.5 with { \node[transform shape, xscale = .8, yscale=.4] {/}; } }, postaction = {decorate} },
  open/.style = {decoration = {markings, mark = at position 0.5 with { \node[transform shape, scale = .7] {$\circ$}; } }, postaction = {decorate} }
}
\newcommand{\PGL}{\operatorname{PGL}}
\newcommand{\GL}{\operatorname{GL}}
\newcommand{\SL}{\operatorname{SL}}
\newcommand{\Frob}{\operatorname{Frob}}
\providecommand{\fgl}{\mathfrak{gl}}
\newcommand{\fp}{\mathfrak{p}}
\newcommand{\fh}{\mathfrak{h}}
\DeclareMathOperator{\Gal}{Gal}
\DeclareMathOperator{\Aut}{Aut}
\DeclareMathOperator{\Hom}{Hom}
\DeclareMathOperator{\End}{End}
\DeclareMathOperator{\cEnd}{\cE nd}
\DeclareMathOperator{\cHom}{\cH om}
\DeclareMathOperator{\Image}{Im}
\DeclareMathOperator{\Isom}{Isom}
\DeclareMathOperator{\Emb}{Emb} 
\DeclareMathOperator{\Nat}{Nat}
\DeclareMathOperator{\Spec}{Spec}
\DeclareMathOperator{\Sym}{Sym}
\DeclareMathOperator{\id}{\mathsf{id}}
\DeclareMathOperator{\spe}{sp}
\DeclareMathOperator{\MT}{MT}
\newcommand{\Betti}{\mathrm{Betti}}
\newcommand{\CH}{\operatorname{CH}}
\DeclareMathOperator{\trdeg}{trdeg}
\newcounter{dragos}
\newcounter{giuseppe}
\newcommand{\Addresses}{{
		\bigskip
		\footnotesize
		G.~Ancona, \textsc{IRMA, Strasbourg, France}\\ \nopagebreak
		\texttt{ancona@math.unistra.fr}
		
		\medskip
		
		D. Fr\u{a}\c{t}il\u{a}, \textsc{IRMA, Strasbourg, France}\\\nopagebreak
		\texttt{fratila@math.unistra.fr}
}}
\numberwithin{equation}{section}
\title{Algebraic classes in mixed characteristic and André's $p$-adic periods}
\author{Giuseppe Ancona, Drago\c s Fr\u a\c til\u a}
\date{\today}
\begin{document}

	\maketitle
	\begin{abstract}
		Motivated by the study of algebraic classes in mixed characteristic, we define a countable subalgebra of $\bQb_p$ which we call the algebra of Andr\'e's $p$-adic periods. The classical tannakian formalism cannot be used to study these new periods. Instead, inspired by ideas of Drinfel'd on the Plücker embedding and further developed by Haines, we produce an adapted tannakian setting which allows us to bound the transcendence degree of Andr\'e's $p$-adic periods  and to formulate the $p$-adic analog of the Grothendieck period conjecture.
		We exhibit several examples where special values of classical $p$-adic functions appear as Andr\'e's $p$-adic periods and we relate these new conjectures to some classical problems on algebraic classes.
		\end{abstract}
		
			\tableofcontents

\section{Introduction}
\subsection{Motivation} As Tate himself pointed out,  the Tate conjecture predicts the existence of algebraic classes but does not predict which cohomological classes should  be algebraic \cite[Aside 6.5]{MilneTate}. 
More precisely, let $X$ be a smooth projective variety over a field of finite type. Denote by $\CH(X)$ the Chow group of $X$ and by $\rH_\ell(X)$ the $\ell$-adic cohomology of $X$. 
Then we have the cycle class   map
\[\cl_X\colon \CH(X) \longrightarrow \rH_{\ell}(X),\]
and the prediction is that the  
 $\bQ_\ell$-vector  space $\Image (\cl_X\otimes  \bQ_\ell )$  should coincide with the Galois invariant part of the $\ell$-adic cohomology $\rH_\ell(X)$. 
However, it does not describe the $\bQ$-vector space $\Image (\cl_X\otimes  \bQ )\subset \rH_\ell(X)$ and actually the inclusion \[\Image (\cl_X\otimes  \bQ ) \subset \Image (\cl_X\otimes  \bQ_\ell )\] 
is very mysterious.

For example, in the proof of the fact that when $A$ is an abelian variety over a finite field, the natural injection
$\End(A)\otimes \bQ_\ell \hookrightarrow \End_{\Gal}(\rH^1_\ell(A))$
is also surjective \cite{Tate}, the crucial  elements in   $\End(A)\otimes \bQ_\ell$ that   Tate constructed  are actually $\ell$-adic limits of endomorphisms that do not belong  to $\End(A)\otimes \bQ$. In fact, even though Tate's theorem says that abelian varieties over finite fields have plenty of endomorphisms, it is in general very hard to construct explicit ones besides the Frobenius.

Another example of the  subtle difference between    $\Image (\cl_X\otimes  \bQ )$ and $\Image (\cl_X\otimes  \bQ_\ell )$ appears also in the study of the Kuga--Satake construction for the proof of the Tate conjecture for K3 surfaces over finite fields. There, one would like to construct sufficiently many divisors on the K3 surface $S$ by reducing modulo $p$ the divisors on several lifitings  $\tilde{S}$. To do so, the key point is to understand the position, inside the cohomology of $S$, of the Hodge filtration induced by a given lifting with respect to $\Image (\cl_X\otimes  \bQ )$, where $X$ is the Kuga--Satake variety associated with  $S$.

As a very concrete instance of this subtlety, notice that one can find in $\ell$-adic (or crystalline) cohomology plenty of Galois-invariant lines without fundamental classes of subvarieties. For example, consider a  surface $S$ over a finite field which admits two line bundles $L_1$ and $L_2$ on $S$ such that their classes $\cl_S(L_1)$ and $\cl_S(L_2)$ in $\ell$-adic cohomology  are linearly independent and whose intersection product is non zero (for example, $S$ is an abelian surface). Take a non rational scalar $\alpha \in \bQ_\ell $. Then the $\bQ_\ell$-line generated by $\cl_S(L_1)+\alpha \cdot \cl_S(L_2)$ is Galois-invariant but it cannot contain the class of a line bundle as the intersection product between classes of  line bundles is a rational number.

In characteristic zero there is a clear candidate for the $\bQ$-vector subspace $\Image (\cl_X\otimes  \bQ )$.
Namely, the Artin comparison theorem gives an isomorphism between $\ell$-adic cohomology and singular cohomology, and $\Image (\cl_X\otimes  \bQ )$  is described by the Hodge conjecture. Again in \cite[Aside 6.5]{MilneTate}, Tate  suggests that the absence of a conjectural description of $\Image (\cl_X\otimes  \bQ )$ in positive characteristic is one  reason for which algebraic classes are less understood there, even in codimension one (in contrast to the characteristic zero situation where we have the Lefschetz $(1,1)$ theorem). This paper is intended as a first step towards such a description.
The setup that we study is the case where the variety has a lifting to characteristic zero and the cohomology is the crystalline one  (the $\ell$-adic setting is discussed in   \cref{section:ellGPC} but our investigation   is less satisfying).

\subsection{From algebraic classes to periods}
For simplicity in this introduction, let $X_0$ be a smooth projective variety over $\bQ$ and let $p$ be a prime number   of good reduction.
Fix a model $X$ of $X_0$ over some localization of $\Spec(\bZ)$  and let $X_p$ be the fiber of $X$ over the prime $p$ which we assume moreover to be smooth. 
In order to describe the $\bQ$-vector subspace  $\Image (\cl_{X_p}\otimes  \bQ )\subset  \rH^*_\crys(X_p,\bQ_p)$, one can use the 
Berthelot comparison theorem \cite[Theorem V.2.3.2]{BerthelotCrys}    between the de Rham cohomology of $X_0$ and the crystalline cohomology of $X_p$:
\begin{align}\label{BO intro}
	  \rH^*_{\dR}(X_0/\bQ)\otimes\bQ_p  \simeq \rH^*_\crys(X_p,\bQ_p)
\end{align}
and deduce a natural map 
\begin{align}\label{BO point intro}
	\Image (\cl_{X_p}\otimes  \bQ ) \hookrightarrow \rH^*_{\dR}(X_0/\bQ)\otimes\bQ_p .
\end{align}
This map relates two different $\bQ$-vector spaces: algebraic classes in characteristic $p$ with rational coefficients  and de Rham cohomology in characteristic zero. Our original task to describe the position of  $\Image (\cl_{X_p}\otimes  \bQ )$ inside $  \rH^*_\crys(X_p,\bQ_p)$ is equivalent, in this mixed characteristic setting, to describing the position of  $\Image (\cl_{X_p}\otimes  \bQ )$ with respect to $\rH^*_{\dR}(X_0/\bQ)$.
For example, one can fix a $\bQ$-basis of  $\rH^*_{\dR}(X_0/\bQ)$ and try to write the coordinates of the elements of $\Image (\cl_{X_p}\otimes  \bQ )$ with respect to this basis.
Note that these coordinates are numbers in $\bQ_p$ and if we change the fixed basis of $\rH^*_{\dR}(X_0/\bQ)$, the $\bQ$-span of these coordinates in $\bQ_p$ does not change.

Our investigations led us to think that these coordinates behave like periods, both for their transcendental properties and for their relations to special values.
Our evidence is the following: 
\begin{enumerate}
\item[a)] Special values of the $p$-adic logarithm, products of special values of the $p$-adic Gamma function and the $p$\nobreakdash-adic hypergeometric functions appear as such coordinates (see \cref{ex log,Eg:H_M for E=CM,Eg:two ell curves}).
\item[b)] The transcendence degree of these coordinates can be bounded in terms of motivic Galois groups  (\cref{SS:homogeneous})  and it is possible to state  an analog of the Grothendieck period conjecture (\cref{SS:pGPC}), which for elliptic curves appears to be related to a conjecture of Andr\'e (\cref{construction andre}).
\end{enumerate}

These coordinates for all possible $X$ span a countable $\bQ$-subalgebra of $\bQ_p$ which, in light of the above properties, seems to be a  $p$-adic analogue of classical periods.
This algebra is unrelated to Fontaine's ring of $p$-adic periods, which has several spectacular cohomological properties but for which the naive Grothendieck period conjecture \cite{AndreBetti} does not hold. More precisely, it is shown in loc. cit. that its veracity depends on the choice of an embedding $\obQ\hookrightarrow\obQ_p$.
However, more recently André has proved (see \cite[Theorem 3.5]{Andobserv}) that for a generic such embedding, the surjectivity in the Tate conjecture implies the $p$-adic Grothendieck conjecture (for Fontaine's periods). 
As we will explain in \S \ref{related work}, other attempts to construct  such a subalgebra of  $\bQ_p$ of $p$-adic periods appeared in the literature. 
Our approach not only includes and generalizes the previous ones but, more importantly, it is the first one providing a sharp bound of the transcendence degree and which allows a formulation of a $p$-adic analog of  the Grothendieck period conjecture.

\subsection{Classical periods}\label{classical setting}
In the classical setting, periods are complex numbers which can be written as integrals of algebraic differential forms on simplices of algebraic varieties defined over a number field. The cohomological interpretation is the following. Following de Rham and Grothendieck, for a variety $X$ over $\bQ$ we have that integration gives a functorial isomorphism 
\begin{align}\label{Groth point}
	 \rH^*_{\dR}(X/\bQ)\otimes\bC \simeq \rH^*_{\Betti}(X,\bQ)\otimes\bC
\end{align}
and the coordinates of one $\bQ$-structure with respect to the other are precisely the periods $\mathcal{P}_\bC(X)$ associated with $X$. 

Many interesting complex numbers appear as periods, such as $2\pi i$, special values of the logarithm, products of special values of the Gamma function, and one would like to be able to control their transcendence degree. To do so, the idea of Grothendieck was to consider all possible functorial isomorphisms 
\[\rH^*_{\dR}(X/\bQ)\otimes R\simeq \rH^*_{\Betti}(X,\bQ)\otimes R,\] 
for all commutative $\bQ$-algebras $R$, which are moreover compatible with the K\"unneth formula. 
The general tannakian formalism will ensure that there is a variety $\mathcal{T}_X$ over $\bQ$ representing all such comparison isomorphisms. 
Notice that the isomorphism \eqref{Groth point} corresponds to a  $\bC$-point of $\mathcal{T}_X$ with residue field equal to the field generated by $\cP_\bC(X)$, hence the transcendence degree of the algebra generated by  $\mathcal{P}_\bC(X)$ is at most the dimension of $\mathcal{T}_X$. Grothendieck has conjectured that they should be equal, i.e., that such a bound is optimal.
It turns out that this bound, together with all the tannakian interpretation that we sketched above, is at the heart of several recent advances in the arithmetic study of multizeta-values, as for example in \cite{brown}.

In order to make the transcendence bound useful, one needs to compute the dimension of $\mathcal{T}_X$. To do so, Grothendieck has considered all possible functorial automorphisms 
\[\rH^*_{\dR}(X/\bQ)\otimes R \simeq \rH^*_{\dR}(X/\bQ)\otimes R,\]
 for all commutative $\bQ$-algebras $R$, which are compatible with the K\"unneth formula. 
This turns out to be representable by an algebraic group $G(X)$, called  the motivic Galois group of $X$, whose natural action on $\mathcal{T}_X$ by right composition makes $\mathcal{T}_X$ a~$G(X)$-torsor. 
In particular, $\mathcal{T}_X$ and $G(X)$ have the same dimension. Moreover, the group $G(X)$ can be explicitly calculated in numerous cases providing thus explicit bounds for the transcendence degree of the algebra generated by the classical periods~$\cP_\bC(X)$.

\subsection{Main results}
 
In our setting   we would like to study
\begin{align}\label{BO point intro 2}
	\Image (\cl_{X_p}\otimes  \bQ )\otimes\bQ_p  \hookrightarrow \rH^*_{\dR}(X_0/\bQ)\otimes\bQ_p 
\end{align}
induced by the Berthelot comparison isomorphism \eqref{BO point intro}. Our $p$-adic periods  $\mathcal{P}_p(X)$ are defined as the coordinates of the left $\bQ$-structure with respect to the right one. In order to study their transcendence properties one would like to imitate  Grothendieck's construction we sketched above, with the isomorphism   \eqref{Groth point} replaced by the injection \eqref{BO point intro 2}. The fact that this injection   is not in general an isomorphism does not allow one to use the same tannakian constructions and let us think for a longtime that such a tannakian framework adapted to $p$-adic periods could not exist. This is probably also the reason why all the previous approaches to $p$-adic periods in the literature have considered only  settings  where \eqref{BO point intro 2} is an isomorphism (see \S \ref{related work}).

The tannakian interpretation of the Plücker embedding and a generalization of it due to Drinfel'd (see \cite{Hai} for a detailed account),  inspired us to think that such a construction might also apply in our setting, namely considering injections instead of isomorphisms. 
We believe that this psychological step, as natural as it can appear afterwards, is a real contribution of our work.  
More precisely, consider all possible functorial injections 
\[\Image (\cl_{X_p}\otimes  \bQ )\otimes R \hookrightarrow \rH^*_{\dR}(X_0/\bQ)\otimes R\] 
for all commutative $\bQ$-algebras $R$, which are compatible with the tensor structures (see \cref{D:of H_M} for a precise definition) and show that there is a variety $\mathcal{H}_X$ over~$\bQ$ representing all such injections (\cref{T:cH_M is repr}).  
As in the classical case, the map \eqref{BO point intro 2} is a $\bQ_p$-point of $\mathcal{H}_X$ and one deduces the following result:
\begin{theorem}\label{thm intro}
	The transcendence degree of the algebra generated by $p$-adic periods  satisfies the bound
	\begin{align}\label{ineq intro}
	\trdeg_\bQ( \mathcal{P}_p(X) )\le 	   \dim \mathcal{H}_X.
	\end{align}
\end{theorem}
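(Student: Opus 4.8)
The plan is to mimic, with the injection \eqref{BO point intro 2} playing the role of Grothendieck's comparison isomorphism, the classical argument recalled in \cref{classical setting}, using the variety $\mathcal{H}_X$ produced by \cref{T:cH_M is repr} in place of the torsor $\mathcal{T}_X$. Fix once and for all a $\bQ$-basis of $\Image(\cl_{X_p}\otimes\bQ)$ and one of $\rH^*_{\dR}(X_0/\bQ)$, of sizes $r$ and $n$. By construction an $R$-point of $\mathcal{H}_X$ is in particular an $R$-linear injection $\Image(\cl_{X_p}\otimes\bQ)\otimes R\hookrightarrow \rH^*_{\dR}(X_0/\bQ)\otimes R$, hence is recorded by an $n\times r$ matrix over $R$; this realises $\mathcal{H}_X$ as a locally closed subscheme of the affine space $\bA^{nr}_\bQ$ of such matrices, the tensor-compatibility conditions of \cref{D:of H_M} being closed conditions and injectivity an open one. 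In particular $\mathcal{H}_X$ is a $\bQ$-scheme of finite type, so $\dim\mathcal{H}_X<\infty$.

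Next I would observe that the Berthelot comparison isomorphism \eqref{BO intro}, composed with the crystalline cycle class map, gives the injection \eqref{BO point intro 2}; it is compatible with the K\"unneth decompositions and multiplicative for the ring structures (intersection of cycles on the source, cup product on the target), hence it satisfies the tensor constraints defining $\mathcal{H}_X$ and therefore defines a point $\pi\in\mathcal{H}_X(\bQp)$. Writing $x\in\mathcal{H}_X$ for its image and $\kappa(x)$ for the residue field there, the morphism $\pi$ gives a field embedding $\kappa(x)\hookrightarrow\bQp$ under which the $nr$ affine coordinates of the point $x$ map to the $nr$ entries of the matrix of \eqref{BO point intro 2}, that is, to the numbers $\mathcal{P}_p(X)$. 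Since those coordinates generate $\kappa(x)$ over $\bQ$, this gives $\bQ(\mathcal{P}_p(X))=\kappa(x)$ as subfields of $\bQp$; and, as already noted in the introduction, replacing the chosen bases by others defined over $\bQ$ replaces these coordinates by $\bQ$-linear combinations of them, so the identification does not depend on the choices.

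It then remains to invoke the standard fact that for a scheme $Y$ of finite type over a field $k$ and a point $x\in Y$ one has $\trdeg_k\kappa(x)=\dim\overline{\{x\}}\le\dim Y$. Applying it with $Y=\mathcal{H}_X$ and $k=\bQ$ yields
\[\trdeg_\bQ\bigl(\mathcal{P}_p(X)\bigr)=\trdeg_\bQ\kappa(x)\le\dim\mathcal{H}_X,\]
which is exactly \eqref{ineq intro}.

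Granting \cref{T:cH_M is repr}, this argument is essentially formal; the only points requiring care are (i) that \eqref{BO point intro 2} genuinely defines a functorial point of $\mathcal{H}_X$, i.e. that it respects all the tensor data built into \cref{D:of H_M}, which comes down to the multiplicativity of the cycle class map and of Berthelot's isomorphism, and (ii) the elementary bookkeeping identifying the field generated by $\mathcal{P}_p(X)$ with a residue field of $\mathcal{H}_X$, for which it matters that we presented $\mathcal{H}_X$ inside the matrix space $\bA^{nr}_\bQ$. The substantial work — constructing $\mathcal{H}_X$ and, later, estimating $\dim\mathcal{H}_X$ in terms of motivic Galois groups, the analogue of Grothendieck's computation of $G(X)$ — lies elsewhere and does not enter this proof.
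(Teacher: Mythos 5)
Your overall strategy is the paper's: the Berthelot comparison defines a $\bQp$-point $\fcB$ of $\cH_X$, and one bounds $\trdeg_\bQ(\cP_p(X))$ by the transcendence degree of the residue field of the image of that point, which is at most $\dim\cH_X$ (this is the second proof of \cref{T:trdeg <= dim H_M}, combined with \cref{C:trdegM<=dimSpecA_p}). However, there is a genuine error in your step (ii). You claim that an $R$-point of $\cH_X$ \emph{is} an $n\times r$ matrix subject to closed tensor conditions and an open injectivity condition, so that $\cH_X$ sits inside $\bA^{nr}_\bQ$ and its residue field at $x$ is generated by the $nr$ entries of \eqref{BO point intro 2}, giving $\bQ(\cP_p(X))=\kappa(x)$. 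This is false in general: by \cref{D:of H_M} a point of $\cH_X$ is a \emph{natural transformation on the whole tannakian category} $\<X\>$, i.e.\ a compatible family of injections $Z_p(\spe(N))\otimes R\hookrightarrow R_\dR(N)\otimes R$ for every $N\in\<X\>$, and because $Z_p$ is only lax-monoidal (the K\"unneth formula fails for algebraic classes, \cref{only lax-monoidal}, \cref{tensor periods}) this family is \emph{not} determined by its component on the single object: powers of $X$ carry cycles that are not products of cycles on $X$, and the value of the natural transformation on those cycles is extra data. So the restriction map $\cH_X\to\bA^{nr}$ is not an immersion, and $\kappa(x)$ is the field generated by $\cP_p(\<X\>)$, which in general strictly contains $\bQ(\cP_p(X))$; \cref{eg:sym4 no Kunnet for periods} gives an explicit instance where the matrix coefficients of one object fail to generate $\cO(\cH_X)$.

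The good news is that your conclusion only needs the inclusion $\bQ(\cP_p(X))\subseteq\kappa(x)$, which does hold: the entries of the matrix of \eqref{BO point intro 2} are the values at $\fcB$ of the regular functions $[X,v,\xi]^{\mathrm{mot}}\in\cO(\cH_X)$ of \cref{motivic periods}, hence lie in $\kappa(x)$, and then $\trdeg_\bQ(\cP_p(X))\le\trdeg_\bQ\kappa(x)\le\dim\cH_X$. This is exactly how the paper argues: $\cP_p(X)\subset\cP_p(\<X\>)$ and $\eval_B\colon\cO(\cH_X)\twoheadrightarrow\cP_p(\<X\>)$ (\cref{T:trdeg <= dim group}, \cref{r:surj}). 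You should also not derive finiteness of $\dim\cH_X$ from the (nonexistent) embedding into matrix space; it comes from the representability statement \cref{T:cH_M is repr}, i.e.\ from Haines' theorem identifying $\cH_X$ with $G_\dR/G_\crys$. Finally, note that the correct way to exploit only the matrix coefficients of $X$ itself is the subalgebra $\cAp(X)\subset\cO(\cH_X)$ of \cref{motivic periods}, which yields the sharper bound $\trdeg_\bQ(\cP_p(X))\le\dim\Spec(\cAp(X))$ of \cref{C:trdegM<=dimSpecA_p}.
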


A second difference with the classical case comes from the fact that  the algebra  generated by the $p$-adic periods of $X$ does not contain, in general, all the $p$-adic periods of all powers of $X$. 
This comes from the fact that the K\"unneth formula holds for cohomology groups but does not hold for algebraic classes (as in general powers of $X$ have more algebraic classes than just products of algebraic classes on~$X$). 
For this reason one should also consider the larger algebra $\cP_p(\<X\>)$ generated by the $p$-adic periods of all powers $X^n$ and the tannakian constructions we described above actually give the following inequalities
	\[\trdeg_\bQ ( \mathcal{P}_p(X) )\le 	\trdeg_\bQ (\cP_p(\<X\>))\le    \dim \mathcal{H}_X.\]
	
We also construct a finer tannakian object which gives a sharper bound for the $p$-adic periods of $X$; see
\cref{motivic periods} for details. 
All these bounds   improve the previous ones and give conceptual explanations to several known relations on $p$-adic special values; see \cref{related work}.

As for classical periods \cref{classical setting}, one needs to be able to compute the dimension of $\cH_X$ in order for the bound to be useful.
To do so we consider the motivic Galois group  $G(X_0)$ introduced in \S \ref{classical setting}. It has again a natural action on $\mathcal{H}_X$ by right composition. Now there is the third difference with respect to the classical case:   $\mathcal{H}_X$ is not in the general a $G(X_0)$-torsor but we show that it is always a~$G(X_0)$-homogeneous space. 

In order to compute $\dim \mathcal{H}_X$, one has to describe a stabilizer. We compute the stabilizer precisely of  the $\bQ_p$-point induced by  \eqref{BO point intro 2}. It turns out that such a $\bQ_p$-algebraic subgroup is the motivic Galois group  $G(X_p)$ associated with crystalline cohomology\footnote{Note that  $G(X_p)$ is indeed a subgroup of $G(X_0)\otimes \bQ_p$, through the isomorphism \eqref{BO intro}. }, namely  the $\bQ_p$-algebraic  group representing all functorial, tensor automorphisms $\rH^*_\crys(X_p,\bQ_p)\otimes R \simeq \rH^*_\crys(X_p,\bQ_p)\otimes R$, for all $\bQ_p$-algebras $R$.
To sum up we obtain
	\begin{align}\label{sabato mattina}
\dim \mathcal{H}_X  = \dim G (X_0)  -  \dim G (X_p).
	\end{align}

 The $p$-adic analog of the Grothendieck period conjecture  predicts that the inequality $	\trdeg (\cP_p(\<X\>))\le    \dim \mathcal{H}_X$ should actually be an equality (\cref{Conj:p adic analog GPC tensor}). 
An analogous conjecture is formulated for the subalgebra   $\cP_p(X) \subset \cP_p(\<X\>)$ (see \cref{C:trdegM<=dimSpecA_p} and \cref{Conj:p adic analog GPC}). To be precise, these conjectures are formulated only under a minor condition, called ramification condition, which is briefly discussed in \S \ref{related work}   and detailed in  \cref{SS:ramification}.
As already mentioned, our setting is the first where the Grothendieck period conjecture has a $p$-adic analog; see  \S \ref{related work}.

As particular cases, these conjectures predict the precise transcendence degree of some special values (logarithm, hypergeometric, Gamma function, \ldots) which will hopefully be useful to number theorists. As pointed out in \S \ref{classical setting}, tannakian techniques have been at the heart of several recent advances on rationality questions  on special values (for example multiple zeta values as in \cite{brown}) and we hope that the basis built in this paper might play the same role in the study of  $p$-adic special values.

%

\subsection{Related work}\label{related work}This paper suggests that there should be a bridge  between 
  algebraic classes in  characteristic $p$
and a $p$-adic analog of classical periods.

During our work we discovered that a similar bridge was crossed by André in the opposite direction:  the motivation and the conclusion are reversed \cite{Andre1}.
His aim was to find a canonical $\bQ$-structure of $\rH^*_\crys(X_p,\bQ_p)$ and he realized that, when $X_p$ is a supersingular abelian variety, this is possible using algebraic classes. 
He made a conjecture in this setting which is implied by our analog of the Grothendieck period conjecture (it is a special case of the weak version of our conjecture, as presented in \cref{section:question}).
He realized afterwards that in order for this conjecture to be reasonable one needs to impose some ramification condition above $p$ \cite[I.5.3]{Andre-p}. 
We propose a generalization of it and show that for a given variety $X_0/\bQ$ this condition is satisfied for all but finitely many primes $p$ (\cref{SS:ramification}).
As it appears clear, Andr\'e's work became a great source of inspiration for us and for this reason we would like to call the periods we introduce in this paper Andr\'e's  $p$-adic periods.

In the setting of mixed Tate motives analogous constructions already exist \cite{Furusho,Brownp}.  These works considered the action of the Frobenius on the crystalline realization and defined the $p$-adic periods of the motive as the coefficients of this action with respect to a rational basis induced by \eqref{BO intro}.
It turns out that  Andr\'e's  $p$-adic periods for mixed Tate motives come indeed from those coefficients. However, for general motives, the coefficients of the Frobenius matrix are only special examples of Andr\'e's  $p$-adic periods; see  \cref{more than frob,weight}. Moreover, even just for the coefficients of the Frobenius matrix our definition of $p$-adic periods seems to be the right one for several reasons; see  \cref{remark ultima} for details.
Probably the most important one is the following: the coefficients of the Frobenius matrix  satisfy some algebraic relations due to the fact that the characteristic polynomial of the Frobenius has integral coefficients.
 Those relations have a motivic interpretation in our setting (\cref{pol char}) whereas they do not have one in the setting of  \cite{Furusho,Brownp}; see \cref{remark ultima 2} for more details. In particular, there is no reasonable $p$-adic Grothendieck period conjecture in their setting.

	 
	After a conversation between Joseph Ayoub and the authors, the three of us think that the algebra of motivic $p$-adic periods we construct  in this paper should be related to the algebra of abstract $p$-adic periods constructed in \cite{ayoubp}. In that article Ayoub conjectured that his algebra should be the algebra of functions on a homogeneous motivic space but not on a motivic torsor. It is exactly what happens in our constructions. A precise relation between these two algebras will be studied in the future. This would be the $p$-adic analogue of \cite[Fait 1.4]{ayoubannals}, namely a comparison between the algebra of motivic periods coming from the tannakian setting and the algebra of motivic periods constructed in the triangulated setting.

\subsection{Organization of the paper}
The weak version of the classical Grothendieck period conjecture allows one to formulate concrete conditions predicting the existence of algebraic classes. We start the paper with its $p$-adic analog and discuss the relation between this new conjecture and some classical ones (\cref{section:question}).

For the strong version of the conjecture we need to introduce the motivic framework  (\cref{SS:important hom=num or section of NUM}).
In this setting we can construct the algebra of Andr\'e's $p$-adic periods associated to a motive (\cref{SS:periods}).
 In  \cref{SS:homogeneous} we construct the homogeneous space $\cH_X$ and show that it is representable.

As already mentioned, a ramification condition is needed in order to have a reasonable Grothendieck period conjecture. 
This is discussed in \cref{SS:ramification}.
The ramification condition and the construction of the torsor space allow us to define the $p$-adic analog of the Grothendieck period conjecture  (\cref{SS:pGPC}).

 The paper is written in the language of pure motives but most of the work applies also to the mixed setting. In \cref{S:MTM}, we briefly outline which changes are needed in this context.
 Finally, in  \cref{section:ellGPC} we discuss the $\ell$-adic version of these questions, where \eqref{BO intro} is replaced by Artin's comparison theorem between singular and $\ell$-adic cohomology. 
 In this case, despite some positive results, the general picture is less satisfying.
	
\subsection*{Acknowledgments}	
We thank Emiliano Ambrosi, Yves Andr\'e, Joseph Ayoub, Francis Brown, Ishai Dan Cohen, Pierre Colmez, David Corwin, Clément Dupont, Benjamin Enriquez, Javier Fres\'an, Marco Maculan and Alberto Vezzani for their interest in our work and for numerous fruitful discussions.

We warmly thank the referee for a careful reading which spotted tons of typos and for  all the insightful comments which highly improved the mathematical quality and the exposition of this paper.

 This research was partly supported by the grant ANR--18--CE40--0017 of Agence National de la Recherche. 
\section{Conventions}\label{S:conventions}

Throughout the paper we will work with the following notation and conventions.

\begin{enumerate}

\item \textbf{Base fields.} We fix a prime number $p$ and an algebraic closure $\overline{\bQ}_p$ of ${\bQ}_p$. Let $\overline{\bZ}_p \subset \overline{\bQ}_p$ be the ring of algebraic $p$-adic integers. 
Define  $ \overline{\bQ} \subset \overline{\bQ}_p$ to be the algebraic closure of $\bQ$ in $\overline{\bQ}_p$.
We write $\cO$ for the ring of algebraic integers. 
The inclusion  $ \cO \subset \overline{\bZ}_p$ induces 
 a maximal ideal $\fp$ of  $\cO$ above the ideal  $(p)$ of $\bZ$.
 We denote by $\cO_{\fp}$ the localization of $\cO$ at $\fp$ and by $\overline{\bF}_p$ the residue field.
 
\item\label{crystalline} \textbf{Crystalline cohomology.} Fix a smooth projective variety $X_p$ defined over $\overline{\bF}_p$. 
The usual crystalline cohomology of $X_p$ has coefficients in (the ring of integers of) $\hat{\bQ}^{ur}_p$ the completion of  $ \bQ^{ur}_p$, the maximal unramified extension of $\bQ_p$ in $\bQb_p$. 
We will work instead with crystalline cohomology with $\bQb_p$ coefficients, which is defined as follows. First, consider   the limit of the crystalline cohomology of all models of $X_p$ defined over finite fields.
Such a  limit defines a Weil cohomology with coefficients in $ \bQ^{ur}_p$. Then, one extends  the coefficients from $ \bQ^{ur}_p$ to $\bQb_p$ and obtains a 
  Weil cohomology with coefficients in $\bQb_p$.
  (The construction of such a Weil cohomology is analogous to \cite[\S 2.2, \S 3]{addezio} and is implicit in \cite[I.5.3.5]{Andre-p}.)

Assume moreover that $X_p$ is the reduction modulo $p$ of a smooth projective scheme $X$ over $\cO_{\fp}$ with generic fiber $X_0$. Then  one has a canonical comparison
\begin{align} 
	 \rH^*_{\dR}(X_0/\bQb)\otimes\obQ_p \simeq \rH^*_\crys(X_p,\obQ_p)
\end{align}
which is obtained from   \cite[Theorem V.2.3.2]{BerthelotCrys}  
through the analogous limit process.

\item \textbf{Motives.} We will work with homological motives defined over $\cO_{\fp}$ and their restriction to  $ \overline{\bQ}$ and to $\overline{\bF}_p$. 
More precisely, for a base $S=\cO_{\fp}, \overline{\bQ}$, or $\overline{\bF}_p$ and a field of coefficients of characteristic zero $L$, we let $\Mot(S)_L$ denote the category of homological motives over $S$ with coefficients in $L$ with respect to a Weil cohomology with coefficients containing $L$. For generalities see \cite[IV]{Andmot}.

The category $\Mot(S)_L$ depends a priori on the chosen Weil cohomology. 
In characteristic zero all classical Weil cohomologies (singular, de Rham, $\ell$-adic) give rise to the same category because of the existence of comparison theorems.
In positive characteristic we will always work with one given cohomology, which will be crystalline (except in \cref{section:ellGPC}).

In order to have realization functors with values in usual vector spaces (not super-vector spaces with the Koszul sign commutativity) we  will work only with motives verifying the sign Künneth conjecture (see \cite[Section 6]{Andmot} for details).
We abuse notation and still denote by $\Mot(S)_L$ the category of motives with the commutativity constraint modified.

In this paper we will   work with $L=\bQb$ up to  \cref{SS:homogeneous} and with  
$L=\bQ$ from  \cref{SS:ramification} on.  
When  $L=\bQb$  we drop it from the notation and we simply write \[\Mot(S)=\Mot(S)_{\bQb}.\]

\item\label{rem:cycleclass} \textbf{Cycle class map.}
For a motive $M$, we think of the vector space 
\[\Hom_{\Mot(S)}(\one, M)\] 
as the space of  algebraic classes on $M$. 
Indeed, if $F$ are the coefficients of the cohomology and $R$ is the associated realization functor any map $f\in\Hom_{\Mot(S)}(\one, M)$ is characterized by the image of $1\in R(\one)=F$ in $R(M)$. 
When $M$ is the motive of a variety, this element is an algebraic class in the usual sense. 
The map \[\cl_M \colon \Hom_{\Mot(S)}(\one, M)\longrightarrow R(M) \hspace{1cm} f \mapsto R(f)(1)\] is in this case the usual cycle class map.

\item\label{specialization} \textbf{Specialization functor.} There are two restriction functors
\[ \begin{tikzcd}
	& \Mot(\cO_{\fp})_L \ar[rd,"M\mapsto M_0"] \ar[ld,"M\mapsto M_p",swap]& \\
	\Mot(\bFb_p)_L & & \Mot(\bQb)_L
\end{tikzcd}
 \]
 which are faithful (again by comparison theorems).
 The right hand side functor is moreover full (because any algebraic cycle is the restriction of its closure).
 This allows us to identify $\Mot(\cO_{\fp})_L$ with a full subcategory of $ \Mot(\bQb)_L$ which we denote by
 \[ \Mot(\bQb)^\textrm{gr}_L \subset \Mot(\bQb)_L\]
 and call it the category of motives with good reduction. Again, when  $L=\bQb$  we drop the subscript from the notation.
 
  The left hand functor above induces then a  specialization functor
\[\spe\colon \Mot(\bQb)^\textrm{gr}_L \longrightarrow \Mot(\bFb_p)_L. \]
 	
\item\label{hom=num} \textbf{hom=num.}
Since it is not known that  homological motives form an abelian category, we will work only with motives satisfying  the
	standard conjecture $\sim_\ho = \sim_\num$ both in characteristic $0$ and $p$; see \cref{rem:homnum}. For instance, motives generated by products of elliptic curves do verify all these conjectures \cite{Lieb,Spiess}.

	This hypothesis guarantees that the categories of motives that we consider are abelian and hence tannakian. 
It  implies as well the Lefschetz standard conjecture for those motives  \cite{Smirnov}, hence   it implies the K\"unneth standard conjecture.

For a variety $X$ we will write $\mathfrak{h}(X)$ for its motive and
\[\mathfrak{h}(X)=\bigoplus_i \mathfrak{h}^{i}(X)  \]
for its K\"unneth decomposition. It is intended that the realization of $\mathfrak{h}^{i}(X) $ is concentrated in cohomological degree $i$.

We refer to \cite[Section 6]{Andmot} for details on these conjectures and their implications.

\item\label{not:generation}\textbf{Tannakian subcategories.}
Given an object $X$ in a tannakian category we denote by $\<X\> $ the full tensor subcategory  generated by $X$, closed under direct sums, subquotients and duals. We call it the tannakian category generated by $X$.
If $X_1,\dots,X_n$ are objects in a tannakian category then we put $\<X_1,\dots,X_n\>:=\<X_1\oplus\dots\oplus X_n\>$ and call it the tannakian category generated by the objects $X_1,\dots,X_n$.

\item\label{extending scalars}\textbf{Extending scalars.}
The extension of scalars for tannakian categories $\cC\mapsto \cC\otimes_K L$ is a little technical and we will not give the definition here since it is not essential to the main body of our work. 
The interested reader could consult the following references for details \cite[\S 4]{Delignelog}.

For example, if $G$ is an affine algebraic group over $K$ and $L$ is a field extension of $K$, then we have $\Rep(G)\otimes_K L\simeq\Rep(G_L)$ (\cite[4.6]{Delignelog}).
Under the hypothesis we work on, i.e,  \eqref{hom=num}, the categories of motives we consider are of the above type.

The only properties that we will use are the following. 
If $\cC$ is a tannakian category over $K$ and $L$ is a field extension of $K$, then the scalar extension $\cC\otimes_KL$ comes with a functor
\[ \iota\colon \cC\to \cC\otimes_ KL \text{ verifying}\]
\[ \Hom_\cC(X,Y)\otimes_K L \simeq \Hom_{\cC\otimes_KL}(\iota(X),\iota(Y)).\]
Moreover, for any $X\in \cC$, we have an equivalence $\<X\>\otimes_KL\simeq\<\iota(X)\>$.
\end{enumerate}

\section{Some questions on algebraic classes in mixed characteristic}\label{section:question}

In this section we would like to raise some new questions on algebraic classes in mixed characteristic and relate them to classical conjectures. 

We let $X$ be a smooth projective scheme over an open subset of the ring of integers of some number field.
We denote the generic fiber by $X_0$ (it is a smooth projective variety over a number field $K$) and  by $X_p$ the fiber above a fixed place $\fp$ (it is a smooth projective variety over a finite field).

Berthelot \cite[Theorem V.2.3.2]{BerthelotCrys}  has defined a natural isomorphism between the de Rham cohomology of $X_0$ and the crystalline cohomology of $X_p$:
\begin{align}\label{Eq:the Berthelot compar}
	 \rH^*_{\dR}(X_0/K)\otimes K_{\fp} \simeq \rH^*_\crys(X_p,K_{\fp})
\end{align}
where $K_{\fp}$ is the $\fp$-adic completion of $K$.
The left hand side has a $K$-structure, namely $\rH^*_{\dR}(X_0/K)$.
The right hand side contains a natural $\bQ$-vector space $\rH^*_\alg(X_p)$ the algebraic classes (with rational coefficients) on $X_p$.
These do not, in general, span the whole crystalline cohomology.

\begin{question}(pGPCw)\label{Conj:liftp}
	With the above assumptions, let $\gamma\in \rH^*_\alg(X_p)$ be an algebraic class in the crystalline cohomology of $X_p$ living in the subspace $\rH^*_\dR(X_0/K)\subset \rH^*_\dR(X_0/K)\otimes K_{\fp}$ under Berthelot's isomorphism \eqref{Eq:the Berthelot compar}.
	Does then $\gamma$ lift to an algebraic class in characteristic zero? Namely, is it true that $\gamma$ is in the image of the specialization map $\rH^*_\alg(X_0)\to \rH^*_\alg(X_p)$?
\end{question}

\begin{remark}\label{rem:bcconj}
Let us compare now \cref{Conj:liftp}, which will be denoted  ($p$GPCw), with three classical conjectures which we recall  here in an informal way (see \cite[VII]{Andmot} for more details).
These three conjectures are the Hodge conjecture (HC), the weak form of the Grothendieck period conjecture (GPCw) and the Fontaine--Messing $p$-adic variational Hodge conjecture ($p$HC).    
	
\begin{itemize}
	\item[(HC)] A rational class in singular cohomology is algebraic if and only if it is in the right step of the de Rham filtration.
	\item[(GPCw)] A rational class in de Rham cohomology is algebraic if and only if it is  rational for singular cohomology.
	\item[($p$HC)] An algebraic class in crystalline  cohomology lifts to characteristic zero if and only if it is in the right step of the de Rham filtration.
\end{itemize}
The above three conjectures together with \cref{Conj:liftp} naturally fit into a table
\begin{center}
\begin{tabular}{c|c}
($p${GPCw}) & ({GPCw})\\
\\
\hline
\\
($p${HC}) & ({HC})
\end{tabular}
\end{center}
 The right part of the table is the characteristic zero part, the left part is the mixed characteristic one, the bottom part is the filtration part and the top part is the rationality part. 
 
 The conjecture (GPCw) is implied by the strong form of the Grothendieck period conjecture (concerning transcendence of periods); see \cite[Proposition 7.5.2.2]{Andmot}. (Notice that \textit{loc. cit.} applies under our hypothesis; see \S \ref{S:conventions}\eqref{hom=num}.)
 Its $p$-adic  counterpart is discussed in \cref{SS:pGPC}.
\end{remark}

Using the motivic language 
one can reformulate \cref{Conj:liftp} as follows:
\begin{question}\label{Conj:correspondancep}
Does an algebraic correspondence defined over a finite field lift to characteristic zero as soon as its action on de Rham cohomology is rational?
\end{question}
The following is a special case.
\begin{question}\label{Conj:abelvarp}
Does an endomorphism of an abelian variety over a finite field lift (up to isogeny) to characteristic zero as soon as its action on de Rham cohomology is rational?
\end{question}
Here is a little evidence.
\begin{proposition}
Let $E$ be a non CM elliptic curve defined over  a quadratic number field $K$. Consider a place of good reduction over an inert prime $\fp$ and suppose that the reduction is a supersingular elliptic curve $E_p$ over $\mathbb{F}_{p^2}$. 
Then there is at least one endomorphism of $E_p$  whose action on the de Rham cohomology is not rational.
\end{proposition}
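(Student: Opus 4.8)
The plan is to argue by contradiction: assume that \emph{every} endomorphism of $E_p$ acts rationally on de Rham cohomology, and deduce that $E$ has complex multiplication. As in the conventions of \S\ref{S:conventions}, $E_p$ and its endomorphisms are taken over $\bFb_p$. Set $V:=\rH^1_{\dR}(E_0/K)$, a $K$-vector space of dimension $2$. Through Berthelot's comparison $\rH^1_\crys(E_p,\obQp)\simeq V\otimes_K\obQp$, the assumption says precisely that the natural action of $B:=\End(E_p)\otimes\bQ$ on $\rH^1_\crys(E_p)$ stabilises $V$; this action is faithful (as for any Weil cohomology) and $B$ is simple, so one obtains a $\bQ$-algebra embedding $\iota\colon B\hookrightarrow \End_K(V)$. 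Since $E_p$ is supersingular, Deuring's theorem identifies $B$ with the quaternion algebra over $\bQ$ ramified exactly at $p$ and $\infty$; in particular $B$ is a division algebra with $\dim_\bQ B=4$, and it is definite.

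The crux is a module-theoretic observation, and this is where the rationality hypothesis is used. As $B$ is a finite-dimensional division algebra, its unique (up to isomorphism) simple left module is $B$ itself; since $\dim_\bQ V=4=\dim_\bQ B$, the left $B$-module $V$ is thus free of rank one, $V\simeq B$. The $K$-action on $V$ commutes with that of $B$ (the latter being $K$-linear), hence embeds $K$ into $\End_B(V)\simeq B^{\op}\simeq B$; so $K$ is a quadratic subfield of the definite algebra $B$, which already forces $K$ to be imaginary quadratic — if $K$ were real quadratic we would be done immediately. Transporting the Hodge line $L:=F^1\rH^1_{\dR}(E_0/K)\subset V$ across the isomorphism $V\simeq B$ turns it into a one-dimensional $K$-subspace of $B$, say $w_0K$; its stabiliser in $B$ is $\{b\in B : bw_0K\subseteq w_0K\}=w_0Kw_0^{-1}$, a conjugate of $K$ inside $B$, hence an imaginary quadratic subfield $F\subset B$ of degree $2$. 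Unwinding $\iota$, $F$ is precisely the algebra of endomorphisms of $E_p$ whose action on $\rH^1_{\dR}(E_0/K)$ preserves the Hodge filtration.

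It remains to note that such endomorphisms must lift to characteristic zero — which is exactly the ($p$HC)-type statement recalled in \cref{rem:bcconj}, here available as a theorem. Applying Grothendieck--Messing deformation theory to the smooth model of $E_0$ over $\cO_{\fp}$, base-changed to $W(\bFb_p)$, and using Berthelot's isomorphism to identify the crystalline realisation of an endomorphism of $E_p$ with the de Rham realisation entering the notion of ``rational'', one gets that an endomorphism of $E_p$ whose de Rham realisation preserves $L$ lifts to characteristic zero; hence $F\subseteq\End(E_0\otimes_K\overline K)\otimes\bQ$. But $E$ is non-CM, i.e. $\End(E_0\otimes_K\overline K)=\bZ$, contradicting $\dim_\bQ F=2$. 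Therefore some endomorphism of $E_p$ acts non-rationally, which is the assertion. The delicate point is this last step: one must keep track of integral structures so that the Grothendieck--Messing criterion — stated for honest endomorphisms and the integral Hodge filtration, which is a direct summand — genuinely applies to the rational line $L$. The deformation theory itself is classical; it is because only this ``easy half'' of the mixed-characteristic picture is needed that the statement is merely ``a little'' evidence.
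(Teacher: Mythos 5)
Your proposal is correct, and its overall architecture coincides with the paper's: argue by contradiction, produce a non-scalar element of $\End(E_p)\otimes\bQ$ that preserves the Hodge filtration on $\rH^1_{\dR}(E/K)$, lift it to characteristic zero, and contradict the non-CM hypothesis. The final lifting step is the same input in both arguments; the paper simply cites \cite[Theorem 3.15]{BO-crys}, which is stated for (iso)crystals and rational Hodge filtrations, so the integrality worry you flag at the end is already absorbed by that reference and you could cite it directly instead of re-deriving it from Grothendieck--Messing.

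Where you genuinely diverge is in how the filtration-preserving endomorphism is produced. The paper uses a bare codimension count: the filtration-preservers form a $K$-subalgebra $A$ of codimension one in $\End(E_p)\otimes K\simeq M_2(K)$, so cutting $A$ out of the $4$-dimensional $\bQ$-form $\End(E_p)\otimes\bQ$ imposes at most two $\bQ$-linear conditions, leaving a subalgebra of $\bQ$-dimension at least $4-2=2$, hence containing a non-scalar element. You instead exploit that $B=\End(E_p)\otimes\bQ$ is a division algebra: $V=\rH^1_{\dR}(E/K)$ is free of rank one over $B$ (here is where $[K:\bQ]=2$ enters for you, via $\dim_\bQ V=4=\dim_\bQ B$), the Hodge line becomes a coset $w_0\tilde K$, and its stabiliser is exactly the conjugate $w_0\tilde Kw_0^{-1}$. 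Your route is less elementary but yields more: the filtration-preserving subalgebra is identified as \emph{exactly} a quadratic field conjugate to $K$ (the paper only gets dimension $\ge 2$), and you obtain as a byproduct that the contradiction hypothesis already forces $K$ to be imaginary quadratic, disposing of the real quadratic case for free. The paper's count, by contrast, needs no structure theory of quaternion algebras and no discussion of where $K$ sits inside $B$. Both uses of the quadratic hypothesis are essential and play the same numerical role ($4-2=2$ versus $2\cdot 2=4$).
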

\begin{proof}
Let us argue by contradiction and suppose that the algebra $\End{E_p}$ acts on $\rH^1_{\dR}(E/K)$. We will construct a non trivial endomorphism of $E$ and thus contradict the non CM property.

First notice that since $E_p$ is supersingular, the algebra $\End{E_p}\otimes K$ has dimension four over $K$ and this forces it to be equal to $\End(H^1_\dR(E/K))\simeq M_2(K)$.
In this matrix algebra the elements that respect the de Rham filtration form a $K$-subalgebra $A$ of codimension one.
Let $\alpha$ be a $K$-linear form on $\End E_p\otimes K$ such that $\ker(\alpha)=A$. 

Second, consider the $\bQ$-algebra $B:=A \cap \End{E_p}\otimes \bQ$ and let us show that it is of dimension at least two. 
Indeed, as $K$ is quadratic, the restriction of $\alpha$ to $\End{E_p}\otimes \bQ$   is equivalent to the datum of two $\bQ$-linear forms on $\End{E_p}\otimes \bQ$. Their common kernel is precisely $B$ and therefore the dimension of $B$ is at least $4-2=2$.

In conclusion there is an endomorphism $f\in \End{E_p}\otimes \bQ$ which is not a multiple of the identity and whose action respects the de Rham filtration. 
By \cite[Theorem 3.15]{BO-crys} such an $f$ lifts to characteristic zero. 
This gives a contradiction as we supposed that $E$ was not CM.
\end{proof}
\begin{remark}
\cref{Conj:abelvarp} already appeared in some form and is due to Andr\'e  \cite[Remark in \S 5]{Andre1}.
 As he observed later  \cite[I.4.6.4 and Conjecture I.5.3.10]{Andre-p}, a ramification condition on $p$ is necessary to avoid some counterexamples (we briefly outline his argument in \cref{Eg:Andre CM ell curve}).
	In particular, a similar condition is needed for \cref{Conj:liftp} to be reasonable; we discuss it in \cref{SS:ramification}. 
	Let us just mention here that a variety over $\bQb$ verifies this condition for all but finitely many primes.
	The corrected version of \cref{Conj:abelvarp} is \cref{Conj:p adic analog GPC weak}.
\end{remark}

\section{Tannakian categories of motives}\label{SS:important hom=num or section of NUM}
	We recall here generalities on the tannakian formalism for categories of motives and their fiber functors. 
	We keep notation from \cref{S:conventions}; in particular we work with homological motives with coefficients in $\bQb$ (unless otherwise specified) and the fiber functors take values in vector spaces.
	
	We  work only with motives satisfying  the
	standard conjecture $\sim_\ho = \sim_\num$ in order to have tannakian categories (see \cref{S:conventions}\eqref{hom=num} and \cref{rem:homnum}).

 \begin{notation}\label{def:realization} Consider the following functors and natural transformations (see also \cref{S:conventions}\eqref{rem:cycleclass}).
\begin{align}\label{Eq:realization functors}
\begin{array}{rll}
	&R_\dR \colon \Mot(\bQb)\to \Vect_\bQb			& M\mapsto \rH^*_\dR(M,\bQb)\\
	&R_\crys \colon \Mot(\obF_p)\to \Vect_{\obQp}	& M\mapsto \rH^*_\crys(M,\obQp)\\	
	& R_B \colon \Mot(\bQb)_\bQ\to \Vect_\bQ		& M\mapsto \rH^*_{Betti}(M,\bQ)\\
	&Z_0  \colon  \Mot(\bQb)\to \Vect_\bQb			& M\mapsto \Hom_{\Mot(\bQb)}(\one,M)\\
	&Z_p  \colon  \Mot(\obF_p)\to \Vect_\bQb		& M\mapsto \Hom_{\Mot(\obF_p)}(\one,M)\\
	& \cl_0   \colon  Z_0 \to R_\dR							& f \mapsto R_\dR(f)(1) \\
	& \cl_p   \colon  Z_p \to R_\crys					& f \mapsto R_\crys(f)(1) \\
	\end{array}
\end{align}
where the functor $R_B$ depends on the choice of an embedding $\sigma \colon \bQb \hookrightarrow \bC.$
\end{notation}

\begin{definition} A lax-monoidal functor between two monoidal categories is the data of a functor $\cF\colon (\cC,\otimes)\to (\cD,\otimes)$ together with morphisms
\begin{align*}
	\varepsilon & \colon \one_\cD\to \cF(\one_\cC)\\
	\mu_{X,Y} & \colon \cF(X)\otimes \cF(Y) \to \cF(X\otimes Y), \text{ for all objects } X,Y \text{ of } \cC
\end{align*}
satisfying the usual conditions of associativity and unity.
If moreover $\varepsilon$ and $\mu_{X,Y}$ are isomorphisms for all $X,Y\in\cC$, then we call $\cF$ a strong-monoidal functor (or simply \emph{monoidal}).

A natural transformation between two lax-monoidal functors is required to be compatible with the extra structure. 
\end{definition}

\begin{remark}\label{only lax-monoidal}
The functors $R_B, R_\dR,R_\crys$ are monoidal, whereas $Z_0$ and $Z_p$ are only lax-monoidal because in general, for $X$ and $Y$ varieties, the product $X\times Y$ has more cycles than the product of cycles from $X$ and $Y$.
Moreover the functors $R_B$, $R_\dR$ and $R_\crys$ are exact and faithful, hence they define realization functors (or fiber functors) of their respective source categories turning them into tannakian categories. 
Notice that $R_\dR$ and $R_B$ make the tannakian category neutral whereas $R_\crys$ does not.
On the other hand, $Z_0$ and $Z_p$ are not faithful as there are non zero motives without cycles (e.g. $\fh^1(X)$).
\end{remark}

Recall the notation  $\<\cdot\> $ from \cref{S:conventions}\eqref{not:generation}.
The following is a special case of \cite[Théorème 1.12]{Del-tannak}.

\begin{theorem}\label{T:tann for <M>}
Let $M\in \Mot(\bQb)^\textrm{gr}$ be a motive with good reduction (see  \cref{S:conventions}\eqref{specialization}) and
consider the specialization functor
$\spe\colon \Mot(\bQb)^\textrm{gr} \longrightarrow \Mot(\bFb_p).$
Then the following holds true.
\begin{enumerate}
		\item\label{item:tanna G_dR} There is a reductive algebraic group $G_\dR(M)$ over $\bQb$ and an equivalence of tensor categories induced by $R_\dR$
		\[ \<M\> \simeq \Rep_\bQb(G_\dR(M)). \]
		\item\label{item:tann for <M>:crys factorizes} The functor $R_\crys$ factorizes (non uniquely) through $\Vect_\bQb$ and there exists a reductive algebraic group $G_\crys(M)= G_\crys(\spe(M))$ over $\bQb$ such that $R_\crys$ induces an equivalence of tensor categories:
		\[ \<\spe(M)\> \simeq \Rep_{\bQb}(G_\crys(M)).\]
		\item\label{inclusion groups} The specialization functor together with a choice of factorization of $R_\crys$ as in \eqref{item:tann for <M>:crys factorizes} induce a morphism of algebraic groups $G_\crys(M)\to G_\dR(M)$ exhibiting $G_\crys(M)$ as a closed subgroup of $G_\dR(M)$.
	\end{enumerate}
\end{theorem}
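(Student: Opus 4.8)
The plan is to deduce all three items from the general tannakian reconstruction theorem \cite[Théorème 1.12]{Del-tannak}, checking in each case that the relevant subquotient-closed tensor subcategory of motives is an abelian, rigid, $\otimes$-category with a fibre functor, and that the fibre functor lands in finite-dimensional vector spaces. Throughout I will use the standing hypothesis $\sim_\ho=\sim_\num$ from \cref{S:conventions}\eqref{hom=num}, which is exactly what guarantees abelianness and semisimplicity (hence reductivity of the Tannakian groups below). I would organize the proof in three steps, one per item.

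\textbf{Step 1 (item \eqref{item:tanna G_dR}).} The category $\<M\>$ is a $\bQb$-linear rigid tensor category; by $\sim_\ho=\sim_\num$ it is abelian and semisimple. The functor $R_\dR$ restricted to $\<M\>$ is exact, faithful (\cref{only lax-monoidal}), monoidal, and takes values in finite-dimensional $\bQb$-vector spaces, so it is a neutral fibre functor and \cite[Théorème 1.12]{Del-tannak} yields an equivalence $\<M\>\simeq\Rep_\bQb(G_\dR(M))$ with $G_\dR(M)=\underline{\Aut}^\otimes(R_\dR|_{\<M\>})$. Reductivity follows from semisimplicity of $\Rep_\bQb(G_\dR(M))$ (a group over a field of characteristic zero whose finite-dimensional representations are semisimple is reductive).

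\textbf{Step 2 (item \eqref{item:tann for <M>:crys factorizes}).} Apply the same argument to $\<\spe(M)\>\subset\Mot(\obF_p)$, which is again abelian semisimple rigid by $\sim_\ho=\sim_\num$ in characteristic $p$. The subtlety is that $R_\crys$ takes values in $\Vect_{\obQ_p}$, not $\Vect_\bQb$, so it is \emph{not} a priori a $\bQb$-linear fibre functor, and $\<\spe(M)\>$ need not be neutral over $\bQb$. Here I would invoke the general fact (Deligne–Milne) that any Tannakian category over a field $k$ admits a fibre functor over some extension; concretely, since $\<\spe(M)\>$ is Tannakian over $\bQb$ (it has the $\obQ_p$-valued fibre functor $R_\crys$ after extending scalars to $\obQ_p$, which shows it is Tannakian), it admits a fibre functor with values in $\Vect_L$ for some field $L$, and one checks that one may in fact factor $R_\crys$ through an honest $\bQb$-linear fibre functor $\omega\colon\<\spe(M)\>\to\Vect_\bQb$ followed by $-\otimes_\bQb\obQ_p$ (this is the "non uniquely" in the statement — the set of such factorizations is a torsor under the automorphisms, i.e.\ under $G_\crys(M)(\obQ_p)$, once one is fixed). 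Then $G_\crys(M):=\underline{\Aut}^\otimes(\omega)$ gives the equivalence $\<\spe(M)\>\simeq\Rep_\bQb(G_\crys(M))$, again reductive by semisimplicity. The independence $G_\crys(M)=G_\crys(\spe(M))$ is tautological since the construction only uses $\spe(M)$.

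\textbf{Step 3 (item \eqref{inclusion groups}).} The specialization functor $\spe$ restricts to a tensor functor $\<M\>\to\<\spe(M)\>$ (it sends $M$ to $\spe(M)$, hence subquotients of $M^{\otimes a}\otimes (M^\vee)^{\otimes b}$ to subquotients of $\spe(M)^{\otimes a}\otimes(\spe(M)^\vee)^{\otimes b}$). Composing with $\omega$ gives a $\bQb$-linear fibre functor on $\<M\>$; I then need to compare it with $R_\dR|_{\<M\>}$. The compatibility of $\spe$ with the de Rham and crystalline realizations — precisely the Berthelot comparison isomorphism of \cref{S:conventions}\eqref{crystalline}, $\rH^*_\dR(M_0/\bQb)\otimes\obQ_p\simeq\rH^*_\crys(M_p,\obQ_p)$, functorial in $M$ — shows that $\omega\circ\spe$ and $R_\dR|_{\<M\>}$ become isomorphic after $-\otimes_\bQb\obQ_p$, hence (both being $\bQb$-valued) are isomorphic fibre functors over $\bQb$ up to the torsor ambiguity; fixing the factorization of $R_\crys$ pins down an isomorphism. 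By the functoriality of $\underline{\Aut}^\otimes$, the tensor functor $\spe\colon\<M\>\to\<\spe(M)\>$ together with this identification induces a homomorphism $G_\crys(M)\to G_\dR(M)$. Finally, that this homomorphism is a \emph{closed immersion} follows from the standard Tannakian criterion (\cite[Proposition 2.21]{Del-tannak} / Deligne–Milne): a tensor functor between Tannakian categories induces a faithfully flat map of groups iff it is fully faithful and closed under subobjects on the image, and induces a closed immersion iff every object of the source is a subquotient of an object pulled back from the target. Here the second condition holds because $\spe$ is \emph{full} on $\<M\>$ with values in $\<\spe(M)\>$: indeed $\spe$ is full on all of $\Mot(\bQb)^\textrm{gr}$ by \cref{S:conventions}\eqref{specialization} (algebraic cycles on $X_p$ need not lift, but that is about $Z$, not about the $\Hom$-groups of the full subcategory — wait, fullness of $\spe$ is exactly the point and is \emph{not} automatic). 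The honest input is rather: $\<\spe(M)\>$ is generated as a Tannakian category by $\spe(M)$, which is the image of $M$, so every object of $\<\spe(M)\>$ is a subquotient of $\spe$ applied to an object of $\<M\>$ — this is the required condition, and it holds by construction of the generated subcategory. Hence $G_\crys(M)\hookrightarrow G_\dR(M)$ is closed.

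\textbf{Main obstacle.} The delicate point is Step 2: making precise that $R_\crys$ factors through a $\bQb$-linear fibre functor $\omega$, i.e.\ that $\<\spe(M)\>$ though not neutralized by $R_\crys$ over $\bQb$ is nonetheless neutral over $\bQb$ via some $\omega$ with $\omega\otimes_\bQb\obQ_p\simeq R_\crys$. This uses that $\obQ_p/\bQb$ has no obstruction (any gerbe over $\Spec\bQb$ bound by a group with a point over $\obQ_p$... ) — more cleanly, it follows because $\<\spe(M)\>$ is generated by a single object whose realization is defined over $\bQb$ combined with descent along $\obQ_p/\bQb$; one should cite \cite[\S 4]{Delignelog} or the Deligne–Milne treatment of fibre functors. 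Everything else is a routine application of \cite[Théorème 1.12]{Del-tannak} and \cite[Proposition 2.21]{Del-tannak}.
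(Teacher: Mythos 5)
Your proposal is correct and follows essentially the same route as the paper: Deligne's Théorème 1.12 for the equivalences, Jannsen's semisimplicity (via $\sim_\ho=\sim_\num$) for reductivity, and the Deligne--Milne subquotient criterion (their Proposition 2.21) applied to the generator $\spe(M)$ for the closed immersion. The only difference is in Step 2, where the paper pins down the factorization of $R_\crys$ through $\Vect_\bQb$ by citing Deligne's Corollaire 6.20 (factorization through a finite — hence trivial, as $\bQb$ is algebraically closed — extension, corresponding to a closed point of the scheme of factorizations) rather than your descent/gerbe sketch, but this is the same underlying fact.
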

\begin{proof}
The functors $R_\dR$ and $R_\crys$ restrict to fiber functors on $\<M\>$, respectively on $\<\spe (M)\>$. 
Then \eqref{item:tanna G_dR} follows from tannakian reconstruction \cite[Théorème 1.12]{Del-tannak}.
For the second point, we also have that $R_\crys$ is a realization functor but the rings of coefficients do not match. 
In this situation, Deligne shows in \cite[Corollaire 6.20]{Del-tannak} that the realization functor, in our case $R_\crys$, can be factorized through a finite extension of the field of coefficients of the source, hence over $\obQ$. This is highly non canonical since it corresponds to the choice of a closed point in a scheme but it will be of no importance to our purpose.

The reductivity of the groups $G_\dR(M)$ and $G_\crys(M)$ comes from the fact that, since we work with motives satisfying the standard conjecture $\sim_{num}=\sim_{hom}$, Janssen's semisimplicity theorem \cite{Jann} implies that the tannakian category is semisimple.

To prove \eqref{inclusion groups} we will use the criterion from \cite[Proposition 2.21]{Del-Mil-tannak}. We need to check that any object of $\<\spe(M)\>$ is a  subquotient of an object coming from $\<M\>$. This is clear because the functor $\spe$ sends the generator $M$ of the latter to the generator $\spe(M)$ of the former.
\end{proof}
\begin{remark}
As in the classical situation, we could define $\cT_M$, the torsor of $\dR$-$\crys$ <<\emph{periods}>> of $M$, as the functor of points associated to tensor isomorphisms:
\[ \cT_M:=\Isom^\otimes (R_\crys\circ \spe |_{\<M\>}, R_\dR|_{\<M\>}\otimes_{\obQ}\obQ_p).\]
Using  \cref{T:tann for <M>}\eqref{item:tann for <M>:crys factorizes} this variety has a (non canonical) $\bQb$-structure.

The Berthelot isomorphism \eqref{Eq:the Berthelot compar} gives an element $c\in \cT_M(\bQb_p)$. 
However, the position of the point $c\in\cT_M(\bQb_p)$ will depend on this non canonical $\bQb$-structure and therefore it will not have any special properties.

In order to have correct periods, one has to find a second canonical $\bQb$-structure. 
We will see in the sequel that algebraic classes furnish it although several subtleties appear related to the fact that $Z_p$  is not, in general, a fiber functor (see \cref{only lax-monoidal}).
\end{remark}

\begin{remark}\label{rem:homnum} (Technical remark, not needed in the sequel.)
		There are traditionally three techniques to define tannakian categories of motives which allow one to work with varieties for which $\sim_\ho = \sim_\num$ is not known. 
		All of them are perfectly fine for the study of classical complex periods but none of them is suitable to our context. 
		
		The first is the construction of motives avoiding algebraic cycles due to Nori. 
		The problem here, among other things, is that we do not know yet how to express algebraic classes in Nori's category of motives. This presents an impediment to finding a $\bQb$-structure to crystalline cohomology.
		
		The second is Andr\'e's category of motivated cycles. 
		Here the field of coefficients of the category, which is conjecturally $\bQ$, may a priori be  too large (for example, it might  contain  transcendental numbers).
		
		 Finally, one could work with numerical motives and use André--Kahn's section  \cite[Section 9]{Andmot}).
		 In this case it is not clear if one can define the specialization map $\spe\colon \Motgr \rightarrow
	\Mot(\bFb_p)$.
		 Also, the analogous   of \cref{Conj:liftp} would not be reasonable. For example, let $X$ be a hypothetical variety which verifies  $\sim_\ho = \sim_\num$ in positive characteristic  but not in characteristic zero. We have the following diagram
		 \[ \begin{tikzcd}
		 	Z_0^{\hom}(X_0) \ar[r,"\alpha_0", twoheadrightarrow]\ar[d,"\spe", hook] & Z_0^\num(X_0) \arrow[l, bend right =40, "s_0"]\\
		 	Z_p^{\hom}(X_p) \ar[r, "\alpha_p", "\sim"'] & Z_p^\num(X_p) \arrow[l,bend left=40, "s_p"']
		 	\end{tikzcd}\]		 
		 where $s_0$ and $s_p$ are the André--Kahn sections.
		 The inclusion 
		\[s_0(Z_0^\num(X_0)) \subset s_p(Z_p^\num(X_p))\cap H_\dR(X/\obQ)\] 
		is strict as the latter contains at least  $Z_0^{\hom}(X_0)$.
		 \end{remark}

\section{Andr\'e's $p$-adic periods}\label{SS:periods}
We define in this section a class of $p$-adic periods (not related to   Fontaine's ring  $B_\crys$) 
 which form a countable subring of $\obQ_p$. 
We state a bound (whose proof is given in \cref{T:trdeg <= dim H_M}) on the transcendence degree of the periods of a given motive and spell out some examples.

Throughout the section $M\in \Motgr$ is a motive with good reduction (see \cref{S:conventions}\eqref{specialization}), $G_\crys(M)$ and $G_\dR(M)$ are the tannakian groups constructed in 
\cref{T:tann for <M>} and  \[Z_p(\spe(M))=\Hom_{\Mot(\obF_p)}(\one,\spe(M))\]
is the $\bQb$-vector space of algebraic classes modulo $p$ (\cref{def:realization}).

\begin{definition}\label{D:matrix of periods of M}
Fix a basis $\mathcal{B}$ of the $\bQb$-vector space $Z_p(\spe(M))$ and a basis $\mathcal{B}'$  of the $\bQb$-vector space $R_\dR(M)$.

Identify $\mathcal{B}$ with a list of linearly independent vectors in  $R_\crys(\spe(M))$ (via $\cl_p(\spe(M))$; see \cref{def:realization}) and $\mathcal{B}'$ with a basis of the  $\bQb_p$-vector space $R_\crys(\spe(M))$ (through Berthelot's comparison isomorphism \eqref{Eq:the Berthelot compar}).

Define $\Mat(M)= \Mat_{\mathcal{B},\mathcal{B}'}(M)$, the matrix of	Andr\'e's $p$-adic periods of $M$ (with respect to the bases $\mathcal{B}$ and $\mathcal{B}'$), as the (vertical rectangular) matrix with coefficients in $\bQb_p$ containing the coordinates of $\mathcal{B}$ with respect to $\mathcal{B}'$ (written in columns).\end{definition}

\begin{remark}\label{rem:rectangular}
The elements of $\mathcal{B}$   are linearly independent in $R_\crys(\spe(M))$  because homological and numerical equivalence coincide on $M$ by hypothesis (see \cref{SS:important hom=num or section of NUM}).
In general, they do not span the whole space, this is why the matrix is only rectangular (contrary to the classical case of complex periods where it is a square matrix).

The matrix  $\Mat_{\mathcal{B},\mathcal{B}'}(M)$ depends on the chosen bases $\mathcal{B}$ and $\mathcal{B}'$ but its $\bQb$-span inside $\bQb_p$ does not.
\end{remark}

\begin{definition}\label{period triples}
Fix a motive $M\in \Motgr$, a vector $v \in Z_p(\spe(M))$ and a covector $\xi \in R_\dR(M)^{\vee}$. To such a triple we associate a number 
\[[M,v,\xi] \in \bQb_p,\] 
called the $p$-adic period associated with the triple, defined as
$[M,v,\xi]= \xi(v)$, where we see $v$ as a vector in $R_\crys(\spe(M))$ via $\cl_p(\spe(M))$ and $\xi$  as a vector in $R_\crys(\spe(M))^{\vee}$ through Berthelot's comparison isomorphism.
 \end{definition}

\begin{remark} 
All coordinates of the matrix $\Mat_{\mathcal{B},\mathcal{B}'}(M)$  are examples of such $[M,v,\xi]$. Conversely, as soon as $v$ and $\xi$ are non zero, one can find bases $\mathcal{B}$ and $\mathcal{B}' $  such that $[M,v,\xi] $  appears as an entry in $\Mat_{\mathcal{B},\mathcal{B}'}(M)$.
\end{remark}

\begin{definition}\label{D:algebra of periods of M}
Consider $\Mat(M)$ as in \cref{D:matrix of periods of M} and define $\cP_p(M)\subset \bQb_p$ as the smallest $\bQb$-algebra in $\bQb_p$ containing the coordinates of the matrix $\Mat(M)$. 

Equivalently, $\cP_p(M) $ is the   $\bQb$-subalgebra of $\bQb_p$ generated by the numbers $[M,v,\xi] \in \bQb_p$ for all possible choices of $v$ and $\xi$. 

The algebra $\cP_p(M)$ is called the algebra of Andr\'e's $p$-adic periods of $M$. 
Any element in this algebra will be called a  $p$-adic period of $M$.
\end{definition}
 
\begin{remark}\label{r:p-adic contains Frobenius periods}($p$-adic periods contain coefficients of Frobenius) 
	Some authors usually call $p$-adic periods the coefficients of the Frobenius matrix \cite{Furusho,Brownp} acting on the de Rham cohomology.
	Let us explain that André's $p$-adic periods include these ones. 
	
	Consider a variety $X$ over $\bQ$ (or a number field) with good reduction $X_p$ at $p$.
	Through Berthelot's comparison isomorphism \eqref{BO intro}, one can consider the Frobenius of $X_p$ acting on $\rH^i_\dR(X/\bQ)\otimes\bQ_p$ and take its matrix with respect to a basis coming from $\rH^i_\dR(X/\bQ)$ for various $i\ge 1$.
	The coefficients of this matrix, or the $\bQ$-algebra they generate, are called the $p$-adic periods of $X$ in \cite{Furusho,Brownp}. Let us call them \emph{Frobenius periods}.
	
	In order to see them as special cases of André's $p$-adic periods, consider for various $i\ge1$ the internal End motive
	$M:=\cEnd(\fh^i(X),\fh^i(X))$. 
	Then the Frobenius of $X_p$ gives a natural element $\gamma_{\Frob}$ in $Z_p(M)$. Notice that the realization $R_\dR(M)$ coincides with $ \End(\rH^i_{\dR}(X/\bQ))$ and the coefficients of $\gamma_{\Frob}$ in a basis of $R_\dR(M)$ are precisely the Frobenius periods that we introduced above.
	
	In general, as we point out in \cref{more than frob}, the motive $M_p$ could have many more cycles than the Frobenius endomorphism and all of them give $p$-adic periods as we defined them. 
\end{remark}

\begin{remark}\label{tensor periods}
Contrary to the classical situation of complex periods, it is not true that the $p$-adic periods of $M$ contain the $p$-adic periods of every object $N\in\<M\>$. 

For singular and de Rham cohomology, all classes on $N$ are polynomials on classes on $M$ due to the Künneth formula. A slightly different way of saying this is that taking singular cohomology is a fiber functor.
This is false for algebraic classes and intimately related to \cref{only lax-monoidal}. Similarly, one can see this as the failure of algebraic classes to provide a fiber functor. 

For instance, it could happen that $M$ has no algebraic cycles whereas the motive $M\otimes M^\vee \in\<M\>$ always has at least one algebraic class, namely the identity of $M$.
But even if a motive has algebraic classes, it is not true in general that  they generate all the algebraic classes of objects in $\<M\>$ (see \cref{eg:sym4 no Kunnet for periods}).
\end{remark}

\begin{definition}\label{D:field of p-adic periods of <M>}
We define $\cP_p(\<M\>)\subset \obQ_p$ to be the $\bQ$-subalgebra generated by the $p$-adic periods of all objects $N\in\<M\>$.
It is called the algebra of Andr\'e's  $p$-adic periods of $\<M\>$ or simply the algebra of    $p$-adic periods of $\<M\>$.
\end{definition}
\begin{remark}
	Clearly the algebra $\cP_p(M)$ is finitely generated by construction. 
	It is shown in \cref{r:surj} that the bigger algebra $\cP_p(\<M\>)$ is also finitely generated.
	It follows that the $\obQ$-subalgebra of $\obQ_p$ generated by all $\cP_p(\<M\>)$, with $M$ varying in $\Mot(\obQ)$, is countable. Hence not all $p$-adic numbers are periods.
	\end{remark}

\begin{theorem}\label{T:trdeg <= dim group}
	We have the inequalities
	\begin{align}\label{Eq:trdeg<=dim group}
	\trdeg_\bQb(\cP_p(M))\le 	 \trdeg_\bQb(\cP_p(\<M\>))\le  \dim G_\dR(M)  -  \dim G_\crys(M).
	\end{align}
\end{theorem}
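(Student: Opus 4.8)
The plan is to build a variety $\cH_M$ over $\bQb$ whose $\bQb_p$-points parametrize the functorial injections $Z_p(\spe(-))\hookrightarrow R_\dR(-)$ on $\<M\>$ that are compatible with the tensor structure (in the sense appropriate for the lax-monoidal $Z_p$), show that the Berthelot comparison isomorphism composed with the cycle class map supplies a canonical point $c\in\cH_M(\bQb_p)$, and prove that the residue field of $c$ is exactly the field generated by $\cP_p(\<M\>)$. This gives $\trdeg_\bQb(\cP_p(\<M\>))\le\dim\cH_M$. The inequality $\trdeg_\bQb(\cP_p(M))\le\trdeg_\bQb(\cP_p(\<M\>))$ is then immediate since $\cP_p(M)\subset\cP_p(\<M\>)$ by \cref{D:field of p-adic periods of <M>}. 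Finally I would identify $\dim\cH_M=\dim G_\dR(M)-\dim G_\crys(M)$: the group $G_\dR(M)$ acts on $\cH_M$ by right composition (precomposing the injection with a $\dR$-automorphism of $R_\dR$), this action is transitive — here one uses that $\<M\>\simeq\Rep_\bQb(G_\dR(M))$ from \cref{T:tann for <M>}\eqref{item:tanna G_dR} so that every functorial injection is controlled by $G_\dR(M)$-equivariance — and the stabilizer of the point $c$ is precisely $G_\crys(M)$, which by \cref{T:tann for <M>}\eqref{inclusion groups} is a closed subgroup of $G_\dR(M)$. Then $\dim\cH_M=\dim G_\dR(M)-\dim G_\crys(M)$ by the orbit–stabilizer dimension count.

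Concretely the steps, in order, are: (1) make precise the functor $R\mapsto\{\text{tensor-compatible injections }Z_p(\spe(-))\otimes R\hookrightarrow R_\dR(-)\otimes R\}$ on commutative $\bQb$-algebras $R$, restricted to the finitely generated tannakian category $\<M\>$, and check it is represented by an affine $\bQb$-scheme $\cH_M$ of finite type (using that $\<M\>$ is generated by the single object $M$ and the standard Plücker-type / Drinfeld–Haines argument referenced in the introduction); (2) verify that $\cl_p\circ(\text{Berthelot})$ is a point $c\in\cH_M(\bQb_p)$ and that its coordinate ring maps onto the $\bQb$-algebra generated by all $[N,v,\xi]$ with $N\in\<M\>$, i.e. onto $\cP_p(\<M\>)$, giving $\kappa(c)=\Frac\,\cP_p(\<M\>)$ and hence $\trdeg_\bQb\cP_p(\<M\>)=\trdeg_\bQb\kappa(c)\le\dim\cH_M$; (3) exhibit the $G_\dR(M)$-action and prove transitivity over $\bQb$ (equivalently over $\bQb_p$, since a homogeneous space stays homogeneous after base change) by a tannakian rigidity argument: any two such injections differ by a tensor-automorphism of $R_\dR|_{\<M\>}$, which is a $\bQb$-point of $G_\dR(M)$; (4) compute $\Stab_{G_\dR(M)_{\bQb_p}}(c)$: an automorphism $g$ fixes $c$ iff $g$ preserves the image of $Z_p(\spe(-))$ inside $R_\crys(\spe(-))$ compatibly with the tensor structure, which by tannakian reconstruction for $\<\spe(M)\>\simeq\Rep_{\bQb}(G_\crys(M))$ is exactly the condition $g\in G_\crys(M)$; (5) assemble $\dim\cH_M=\dim G_\dR(M)-\dim G_\crys(M)$ and conclude.

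I expect the main obstacle to be step (3)–(4): transitivity of the $G_\dR(M)$-action and the precise identification of the stabilizer. In the classical period situation one has a torsor because $Z_0$ (singular cohomology) is itself a fiber functor, so $\cT_M$ is an $\Isom$-scheme; here $Z_p$ is only lax-monoidal and its image in cohomology need not span, so $\cH_M$ parametrizes injections rather than isomorphisms and need not be a torsor. The delicate point is to show the action is still transitive — that the ``extra'' data of which subspace $Z_p(\spe(N))$ lands in is already determined, up to the $G_\dR(M)$-action, by the tensor-functoriality constraint applied across all $N\in\<M\>$ simultaneously — and then that the stabilizer of $c$ sees exactly the relations defining $G_\crys(M)$ and nothing more (no spurious extra automorphisms and no missing ones). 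This is where the Drinfeld–Haines reinterpretation of the Plücker embedding does the real work: it guarantees that the scheme of tensor-compatible injections is a homogeneous space under the automorphism group of the target functor, with stabilizer the automorphism group of the induced sub-``functor''. Everything else — representability (step 1), the coordinate-ring computation (step 2), and the dimension count (step 5) — should be routine given \cref{T:tann for <M>}.
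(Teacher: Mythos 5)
Your proposal is correct and follows essentially the same route as the paper: representability and homogeneity of $\cH_M$ via the Drinfeld--Haines theorem (\cref{T:Haines}, yielding \cref{T:cH_M is repr}), the Berthelot point whose residue field is $\Frac\,\cP_p(\<M\>)$ (\cref{T:trdeg <= dim H_M}), and the dimension count $\dim\cH_M=\dim G_\dR(M)-\dim G_\crys(M)$. The only cosmetic difference is that the paper identifies $\cH_M\simeq G_\dR(M)/G_\crys(M)$ over $\bQb$ using a chosen factorization of $R_\crys$ through $\bQb$ (and reductivity of $G_\crys(M)$ to equate $G_\dR(M)/G_\crys(M)$ with its affine closure), rather than computing the stabilizer of the Berthelot $\bQb_p$-point as you do; both give the same dimension.
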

\begin{proof}
	The first inequality follows from the inclusion $\cP_p(M)\subset \cP_p(\<M\>)$. For the second use \cref{T:cH_M is repr} and  \cref{T:trdeg <= dim H_M}.
\end{proof}
\begin{remark}\label{rem:bound rectangular}
In the classical case of complex periods  $\cP_\bC(M)$ one has the inequality       $\trdeg_\bQb(\cP_\bC(M))\le  \dim G_\dR(M) $. 
(This inequality is expected to be an equality by the Grothendieck period conjecture.) 
In \cref{T:trdeg <= dim group}, the upper bound  is instead $\dim G_\dR(M)  -  \dim G_\crys(M)$.
 One may interpret this difference as related to the fact that the $p$-adic period matrix is only rectangular (\cref{rem:rectangular}).
 \end{remark}
 \begin{remark}\label{rem:bound rectangular}
As in the classical case, the dimensions of $G_\dR(M)$ and $ G_\crys(M)$ are sometimes computable because they are (expected to be) related to tannakian groups of purely cohomological nature.

Indeed, if $M$ comes from a motive with coefficients in $\bQ$, then the Hodge conjecture predicts the equality $\dim G_\dR(M)  = \dim \MT(M) $, where $\MT(M)$ is the Mumford--Tate\footnote{The Tannakian group of the Hodge structure $R_B(M)$.} group of $M$.
Similarly, the Tate conjecture predicts the equality $ \dim G_\crys(M)=  \dim (\varphi \vert R_\crys(M))$, where   $(\varphi \vert R_\crys(M))\le \GL(R_\crys(M)$ is the Zariski closure of the subgroup generated by the Frobenius morphism $\varphi$ acting on $R_\crys(M))$, for a suitable model of $M$ defined over a finite field.
(Note that the groups $ \MT(M) $ and $(\varphi \vert R_\crys(M))$ are often computable.)

These conjectures are widely open, but known in some already interesting cases, for example products of elliptic curves (see \cite{Imai} over $\bC$ and \cite{Spiess} over finite fields). 
 \end{remark}

Let us give some examples. 

In all but one case, $E$ is an elliptic curve over $\obQ$ with good reduction $E_p$ at $p$ and $M = \cEnd(\fh^1(E))$.
Notice that in this situation a cycle $\gamma\in Z_p(M_p)$ is the same thing as an endomorphism $f$ of $E_p$ and its $p$-adic periods are precisely the coefficients of $f$ with respect to a basis of $\rH^1_\dR(E/\bQb)$.

\begin{example}\label{Eg:H_M for E=CM}
	Let $E/\bQb$ be a CM elliptic curve and denote by $E_p$ its reduction modulo $p$.
	We denote by $L$ its CM field. The motive $\fh(E)\in \Mot(\obQ)$ of $E$ decomposes as $\fh(E) = \fh^0(E)\oplus\fh^1(E)\oplus\fh^2(E)$.

	It is known that $G_\dR(\mathfrak{h}^1(E))$ is isomorphic to $\bG_m^2$.  
	Essentially, this amounts to checking that the commutator of the CM field $L$ inside $Mat_2(\bQb)$ is $L\otimes_\bQ \obQ\simeq \obQ\times \obQ$.
	Choosing a basis of $\rH^1_\dR(E)$ provides us with an equivalence $\<\mathfrak{h}^1(E)\>\simeq \Rep(\bG_m^2)$ and this can be done in such a way that $\mathfrak{h}^1(E)$ corresponds to the representation $\obQ(1,0)\oplus \obQ(0,1)$.
	
	Under the chosen equivalence, the motive $M:=\cEnd(\fh(E))=\cEnd(\mathfrak{h}^1(E))$ corresponds to the representation $\obQ(1,-1)\oplus \obQ(-1,1)\oplus \obQ(0,0)^{\oplus 2}$ of $\bG_m^2$.
	In particular, it is not a faithful representation and the subcategory of motives that it generates $\<M\>\subset \< \mathfrak{h}^1(E) \>$ is equivalent to $\Rep(\bG_m^2/\Delta\bG_m)$, i.e., to $\Rep(\bG_m)$.
	This shows that $G_\dR(M)\simeq\bG_m$.

	Now, if $E_p$ is not supersingular (i.e., when $p$ splits into two distinct prime ideals in the CM field $L$), the same argument as above gives $G_\crys(M)=\bG_m$.  Hence the bound in  \cref{T:trdeg <= dim group} implies that all the $p$-adic periods are algebraic. One can also see this directly from the fact that all algebraic classes lift to characteristic zero. Moreover, if the elliptic curve is defined over $\bQ$, then these algebraic numbers are contained in any field extension of $\bQ$ over which $E$ acquires CM.
		
	 The interesting case is when $E_p$ is supersingular. Then we have $\spe(M)\simeq \one^{\oplus 4}$, hence $G_\crys(M)$ is the trivial group. 
	By \cref{T:trdeg <= dim group} we have $\trdeg(\cP_p(\<M\>))\le 1$.

	Let us see this in a more classical way.
	The CM field $L$ is generated by $f\in\End(E)$ which can be chosen such that $f^2$ is a rational multiple of the identity.
	The division algebra $D:=\End(E_p)$ contains $L$ and becomes a vector space of dimension $2$ over it. Denote by $\ad_f\colon D\to D, a\mapsto faf\inv$ the conjugation by $f$.
	We have $\ad_f^2=\id$ and so the eigenspace decomposition of $D$ is of the form  $D=L\id\oplus Ld$ where $d$ satisfies $fd=-df$.

	The $p$-adic periods of $M$ are generated by the matrix of the action of $d$ on $\rH^1_\crys(E_p,\obQ_p)$ written in a basis of $\rH^1_\dR(E/\obQ)$. 
	This is a $2\times 2$ matrix and we need to find $3$ relations between the $4$ coefficients.
	These are given by the determinant and the trace of $d$, which are rational numbers and the relation $fd=-df$.
	Altogether this shows that $\trdeg(\cP(\<M\>))\le 4-3=1$.
 
	The $p$-adic periods appearing in this example are products of special values of the $p$-adic Gamma function.
	As we explained above, only one entry matters in the $p$-adic period matrix.
	Let us describe such an entry following 
	\cite[I.4.6]{Andre-p} which is based on \cite{Ogus,Colem}. 

	We start by introducing some notation. Let $d\in\bN$ be the fundamental discriminant  such that the elliptic curve $E$ has CM by $\bQ(\sqrt{-d})$.
	The embedding $\bQ(\sqrt{-d})\subset \bQ(\zeta_d)$ induces a map of Galois groups 	$(\bZ/d)^\times\to\bZ/2$ which we denote by $\varepsilon$.
	Let $h$ be the class number of $\bQ(\sqrt{-d})$ and $w$ the number of roots of unity in $\bQ(\sqrt{-d})$.
	Finally, we consider the function
	\[ \left\langle\frac{.}{d}\right\rangle\colon (\bZ/d)^\times \to (0,1]\cap \bQ \text{ characterized by } d\left\langle\frac{u}{d}\right\rangle \equiv u.\]
	
	\textbf{Unramified case.} We assume  that $p$ does not ramify in $\bQ(\sqrt{-d})$. Under this hypothesis, the significant entry of the $p$-adic periods matrix is 
	\[ \prod_{u\in (\bZ/d)^\times} \Gamma_p\left(\left\langle p\frac ud\right\rangle\right)^{-\varepsilon(u)w/4h}.\]
	
	As we explained above, in the ordinary case the $p$-adic periods are algebraic numbers. This can also be seen in the above explicit description using the Gross--Koblitz formula.
	In the supersingular case we expect this number to be transcendental (see \cref{Conj:p adic analog GPC}).
	
	\medskip
	\textbf{Ramified case.} If $p$ ramifies in $\bQ(\sqrt{-d})$, the curve $E_p$ is still supersingular and there is a similar, but more complicated, formula worked out by Coleman \cite[6.5]{Colem}. 
	We give here some simplification of it in a special case following André (\cite[I.4.6.4]{Andre-p}).
	
	Let $p=3$ and $d=3n$ for $n\in \bN$ coprime with $p$.  Then the relevant $p$-adic period is of the form
	\[ \kappa:=\prod_{u\in (\bZ/n)^\times} \Gamma_p\left(\left\langle\frac un\right\rangle\right)^{\left(\frac nu\right)w/2h} \]
	where $ \left(\frac n{u}\right)$ is the Jacobi symbol.
	
	If $(\frac np)=1$ André shows that $\kappa\in\obQ$ as a consequence of the Gross--Koblitz formula.
	He furthermore points out that for $n=8$ one has $(\frac np)=-1$ and $\kappa$ is still algebraic. This time the argument is based on the functional equation of $\Gamma_p$.
\end{example}

\begin{example}\label{Eg:H_M for E=ordinary}
	Let $E/\bQb$ be a non CM elliptic curve and consider the endomorphism motive $M:=\cEnd(\mathfrak{h}^1(E))$.
	It is known that the tannakian group $G_\dR(\mathfrak{h}^1(E))$ is $\GL(\rH^1_\dR(M))$ and so isomorphic to $\GL(2)$. We can fix an equivalence of categories $\<\mathfrak{h}^1(E) \>\simeq \Rep(\GL(2))$ such that the motive $\fh^1(E)$ corresponds to the natural representation.
	
	The motive $M$  corresponds to the adjoint representation of $\GL(2)$ on the Lie algebra $\fgl(2) = \End(\obQ^2)$.
	Since the action of $\GL(2)$ on $\fgl(2)$ factors through a faithful representation of $\PGL(2)$, we deduce that the subcategory generated by $M$ is equivalent to $\Rep(\PGL(2))$.
	(Here we have used that for an algebraic group, any faithful representation tensor-generates the whole category of representations of this group; see for example \cite[\S3.5 Theorem]{Wat}.)
	
	As a consequence, we get $G_\dR(M)\simeq \PGL(2)$.
	Now we consider two cases: 
	\begin{itemize}
		\item $E_p$ supersingular. By definition, we have $\spe(M)\simeq \one^{\oplus 4}$
		and we therefore get $G_\crys(M)=1$.
		On the other hand we have $\cP_p(M)=\cP_p(\<M\>)$, the reason being that matrix coefficients for a faithful representation of a reductive group generate the matrix coefficients of all representations.
		In conclusion
		\cref{T:trdeg <= dim group} gives us the bound $\trdeg(\cP_p(M))\le 3$.
		
		The same bound can be deduced in an elementary way similarly to the argument from  \cref{Eg:H_M for E=CM}.
		The division algebra $D:=\End(E_p)$ is generated by two elements, say $f$ and $g$, satisfying the relations $fg=-gf$. Each of them has four matrix coefficients. The trace and the determinant give two conditions per endomorphism. Moreover, the above relation $fg=-gf$ gives a fifth, independent condition. 
		In total we therefore have that the transcendence degree is bounded by $4+4-2-2-1=3$. 
		We conjecture that there are no further algebraic relations (see \cref{Conj:p adic analog GPC}).		
		
		\item $E_p$ not supersingular. Then the same argument as in \cref{Eg:H_M for E=CM} proves that $G_\crys(M)\simeq \bG_m$.
		We can see it as the maximal torus $T$ in $\PGL(2)$. 
		\cref{T:trdeg <= dim group} gives the bound $\trdeg(\cP_p(\<M\>))\le 2$.
		We conjecture that this should be an equality (see \cref{Conj:p adic analog GPC}).
		 
		\noindent Here as well, we  have the equality $\cP_p(M) = \cP_p(\<M\>)$; see \cref{Eg:H_M for E=ordinary new} for more details.
	\end{itemize}
\end{example}

\begin{example}
Let $X/\bFb_p$ be  a smooth projective  variety and consider two lifts $\tilde{X}/\bQb$  and $\tilde{X'}/\bQb$.
The Berthelot  comparison theorem \eqref{Eq:the Berthelot compar} gives
\[\rH^*_{\dR}(\tilde{X}/\bQb)\otimes\obQ_p \simeq \rH^*_\crys(X,\obQ_p) \simeq \rH^*_{\dR}(\tilde{X'}/\bQb)\otimes\obQ_p. \]
Through this identification one can write the coordinates of vectors in $\rH^*_{\dR}(\tilde{X}/\bQb)$ with respect to a  $\bQb$-basis of $\rH^*_{\dR}(\tilde{X'}/\bQb)$.

These coordinates are special cases of the $p$-adic periods we defined. In order to see this consider the motive $M:=\cHom(\mathfrak{h}(\tilde{X}),\mathfrak{h}(\tilde{X'}))$ and notice that $\spe(M) = \cHom(\fh(X),\fh(X))$ and hence it contains the identity morphism. In \cref{D:matrix of periods of M} one can choose a basis $\mathcal{B}$ of $Z_p(\spe(M))$ containing, among its elements, the identity morphism. The coefficients of this vector in a basis of $R_\dR(M)$ are the aforementioned coordinates.
Below we make this more explicit for elliptic curves.
\end{example}
	
\begin{example}\label{Eg:two ell curves}
	
As a special case of the above example, we will consider the Legendre family of elliptic curves where the $p$-adic periods are computed by Katz in \cite[IV]{Katz}. 

Fix a prime $p$ and let $\cE\colon y^2 = x(x-1)(x-\lambda)$ be the Legendre family over $\bA^1$. 
Consider $\lambda_0$ such that $E:=\cE_{\lambda_0}$ has complex multiplication and its reduction $E_p$ modulo $p$ is ordinary. 
For $\lambda$ sufficiently close $p$-adically to $\lambda_0$, we denote by $E(\lambda)$ the fiber of $\cE$ at $\lambda$. 
Katz describes the $p$-adic periods appearing  in the identification (through Berthelot's comparison isomorphism) 
\[\rH_\dR^1(E)\otimes\bQ_p\simeq \rH_\dR^1(E(\lambda))\otimes\bQ_p\]
by interpreting it as the identification induced by the horizontal solutions of the Gauss--Manin connection.

To do so, consider $\omega = dx/2y\in \rH^1_{\dR}(E(\lambda))$ and $\nabla$ the connection on the de Rham vector bundle.
We have $\nabla\omega = (x-\lambda)dx/2y$ and $(\omega$, $\nabla\omega)$ form a basis of $\rH^1_\dR(E(\lambda))$ for all $\lambda$.
For $\lambda = \lambda_0$ we choose a basis of $\rH^1_\dR(E)$ formed by eigenvectors for the CM action.
This can be chosen to be of the form $u = \omega|_{\lambda_0}$ and $v=(\nabla\omega-e\omega)|_{\lambda_0}$ for some algebraic constant $e$.

Consider the hypergeometric equation
\begin{equation}\label{eq:hypergeom_equat}
	 D^2f = -\lambda(\lambda-1)f 
\end{equation}
where $D=\lambda(\lambda-1)d/d\lambda$. 
Let $\alpha$, $\beta$ be the solutions normalized by
\[ \begin{array}{ll}
	\alpha(\lambda_0) = 1, & D(\alpha)(\lambda_0) = e\\
	\beta(\lambda_0) = 0, & D(\beta)(\lambda_0) = 1
\end{array}.\]
Then the horizontal translate of the basis $(u,v)$ of $\rH_\dR^1(E)$ to $\rH_\dR^1(E(\lambda))$ is
\[ \begin{array}{l}
	U = D(\beta)\omega-\beta\nabla\omega\\
	V = -D(\alpha)\omega+\alpha\nabla\omega.
\end{array} \]
which gives us the sought for $p$-adic periods as the coefficients of the matrix
\begin{equation}\label{eq:matrix_hyperg}
 \begin{pmatrix}
	D(\beta)(\lambda) & -D(\alpha)(\lambda)\\
	-\beta(\lambda) & \alpha(\lambda)
\end{pmatrix}
\end{equation}

 Let us compute the tannakian groups for the motive $M(\lambda) = \cHom(E, E(\lambda))$.
 First, we have   $G_\crys(M(\lambda)) = \bG_m$ as $\spe(M(\lambda))$ is simply $\cEnd(E_p)$ whose tannakian group has been computed in \cref{Eg:H_M for E=ordinary}.
 
Second, let us show that $G_\dR(M(\lambda)) = \GL(2)$. 
Let $G(\lambda)$ be the tannakian group of the category $\<\fh^1(E),\fh^1(E(\lambda))\>$.
Since the determinants of $\fh^1(E)$ and $\fh^1(E(\lambda))$ are isomorphic, we deduce that $G(\lambda)$ fits in the pullback diagram:
\[ \begin{tikzcd}
G(\lambda) \arrow[d,twoheadrightarrow] \arrow[r,twoheadrightarrow]& \bG_m^2 \ar[d,"\det"]\\
\GL(2) \ar[r,"\det"] & \bG_m.
\end{tikzcd}
 \]	
Explicitly, we have 
\[ G(\lambda) = \{(a,b,A)\mid ab=\det(A)\} < \bG_m^2\times \GL(2).\]

The tannakian group $G_\dR(M(\lambda))$ is the quotient of $G(\lambda)$ by the diagonal subgroup $\bG_m=\{(a, a, aI_2)\} \le  \bG_m^2\times \GL(2)$ and as such it is isomorphic to $\GL(2)$.
One can also check that the image of $G_\crys(M(\lambda))$ in $G_\dR(M(\lambda))\simeq \GL(2)$ is given by $t\mapsto \mathrm{diag}(1,t)$.

Notice that $Z_p(M(\lambda))$ is generated by $\id$ and $\Frob$ and $\dim\rH^1_\dR(M(\lambda))=4$.
Hence the $p$-adic period matrix is of size $4\times 2$.
The first column is given by the four entries in the matrix \eqref{eq:matrix_hyperg}.
As $\Frob$ lifts to an endomorphism of $E$, we have that the coefficients of the second column are a $\obQ$-linear combination of the above four entries.

In conclusion, \cref{T:trdeg <= dim group} gives us the bound $\trdeg(\cP_p(M(\lambda)))\le 3$.
It follows that the coefficients of the matrix \eqref{eq:matrix_hyperg} satisfy one algebraic relation.
An explicit one can be given using the Wronskian of the solutions of the hypergeometric equation \eqref{eq:hypergeom_equat}. 
This is a reflection of the tannakian constraints defining $G(\lambda)$.
We conjecture that there are no further algebraic relations (see \cref{Conj:p adic analog GPC}).
	\end{example}

\begin{remark} (Frobenius matrix vs $p$-adic periods)\label{more than frob} 
	We showed in \cref{r:p-adic contains Frobenius periods} that André's $p$-adic periods contain the coefficients of Frobenius. Let us further argue that this inclusion should be strict.
	
	Consider $M = \cEnd(\fh^1(E))$ where $E$ is a non CM elliptic curve with supersingular reduction $E_p$ as in the above example. 
	Recall that a cycle $\gamma\in Z_p(M_p)$ is the same thing as an endomorphism $f$ of $E_p$ and its $p$-adic periods are precisely the coefficients of $f$ with respect to a basis of $\rH^1_\dR(E/\bQb)$. 
	Arguing as in \cref{Eg:H_M for E=CM}, we have that the transcendence degree of its periods is at most $2$. In particular, the coefficients of the Frobenius endomorphism have transcendence degree at most 2.
	On the other hand, \cref{Conj:p adic analog GPC} predicts that the inequality $\trdeg(\cP_p(M))\le 3$ is actually an equality. 
	In particular, there should be more $p$-adic periods than just the coefficients of the Frobenius matrix.
	
	See also \cref{remark ultima} for a detailed comparison between Frobenius coefficients and André's $p$-adic periods.
\end{remark}

\section{The homogeneous space of periods}\label{SS:homogeneous}
In this section we establish the tannakian framework for studying $p$-adic periods. As a consequence, we will prove \cref{T:trdeg <= dim group}.

Recall that in the classical situation of comparing de Rham and singular cohomology of a smooth projective variety $X$ defined over a number field, one defines  a torsor $\cP_X$ called the torsor of periods of $X$. 
It is a torsor over the de Rham tannakian group as well as over the Betti tannakian group.
The comparison isomorphism given by integration defines a $\bC$-point of this torsor.

In the mixed characteristic situation, for a smooth projective scheme $X$ over $\cO_{\fp}$, we cannot 
define a torsor but we can define a homogeneous space $\cH_X$ for the de Rham tannakian group
 that we will call the homogeneous space of $p$-adic periods of $X$.
The Berthelot comparison isomorphism between de Rham and crystalline cohomologies provides a $\obQp$-point of $\cH_X$.

Throughout the section $M\in \Motgr$ is a motive with good reduction (see \cref{S:conventions}\eqref{specialization}) and  $\<M\>$ is  the category it generates (see Conventions in  \cref{S:conventions} \eqref{not:generation}).
We will make use of the reductive  tannakian groups $G_\crys(M)$, $G_\dR(M)$  constructed in 
\cref{T:tann for <M>} and of the functors  $Z_p$, $\cl_p,R_\crys$ and $R_\dR$ from \cref{def:realization}.

\begin{definition}\label{D:of H_M} 
	We define the following set-valued functors on the category of $\obQ$-algebras 
\begin{align}
	 \cH_M&:=\Emb^\otimes(Z_p\circ \spe|_{\<M\>}, R_\dR|_{\<M\> })  \subseteq	\cH'_M:= \Nat^\otimes (Z_p\circ \spe|_{\<M\>}, R_\dR|_{\<M\> })
\end{align}
where $\Nat^\otimes$ stands for lax-monoidal natural transformations and
$\Emb^\otimes$ stands for those that are injective.

We call $\cH_M$ the \emph{space of $p$-adic periods of the category $\<M\>$}. 
\end{definition}
\begin{remark}\leavevmode
	\begin{enumerate}
		\item  In the above definition we did not use any $\bQb$-structure of crystalline cohomology, for example the one coming from 		\cref{T:tann for <M>}\eqref{item:tann for <M>:crys factorizes}, hence this object is canonically associated to $M$.
		\item The spaces $\cH_M$ and $\cH'_M$ are endowed with a natural action of $G_\dR(M)$.  	\end{enumerate}	
\end{remark}

\begin{theorem}\label{T:cH_M is repr}
The inclusion $\cH_M\subseteq	\cH'_M$
is actually an equality 
\[\cH_M =	\cH'_M.\]
Moreover, the space $\cH_M $ is representable by an affine variety of finite type over $\bQb$ (still denoted $ \cH_M$) which is a homogeneous space under $G_\dR(M)$.

Finally, we have 
  \[\dim \cH_M =  \dim G_\dR(M)-\dim G_\crys(M).\] 
\end{theorem}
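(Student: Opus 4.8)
The plan is to transport the whole picture into the Tannakian category $\Rep_{\bQb}(G_\dR(M))$ via the equivalence of \cref{T:tann for <M>}\eqref{item:tanna G_dR}, and then to recognize $\cH_M$ and $\cH'_M$ as instances of the homogeneous space studied by Drinfel'd and Haines. Write $G:=G_\dR(M)$ and $H:=G_\crys(M)$, a closed reductive subgroup of the reductive group $G$ by \cref{T:tann for <M>}\eqref{inclusion groups}. Under $R_\dR$ one identifies $R_\dR|_{\<M\>}$ with the forgetful fiber functor $\Forg\colon\Rep_{\bQb}(G)\to\Vect_{\bQb}$ and, using \cref{T:tann for <M>}\eqref{item:tann for <M>:crys factorizes} together with the fact that $\bQb$ is algebraically closed (which trivializes the relevant comparison $G$-torsor), one identifies $Z_p\circ\spe|_{\<M\>}$ with the functor $V\mapsto V^{H}$ of $H$-invariants. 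Hence $\cH_M\cong\Emb^\otimes((-)^{H},\Forg)$ and $\cH'_M\cong\Nat^\otimes((-)^{H},\Forg)$ as fppf sheaves on $\bQb$-algebras; the tautological inclusions $V^{H}\hookrightarrow V$ give a base point $x_0\in\cH_M(\bQb)$, and $G=\underline{\Aut}^\otimes(\Forg)$ acts on both sheaves by post-composition.

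Next I would dispatch the two formal points. First, \emph{$\cH_M=\cH'_M$}: for $\eta\in\cH'_M(R)$, naturality with respect to $\ev\colon V\otimes V^\vee\to\one$, lax-monoidality, and $\eta_\one=\id$ (the last from the unit axiom) yield $\eta_{V^\vee}(\xi)\big(\eta_V(v)\big)=\xi(v)$ for all $v\in(V^{H})_R$, $\xi\in((V^\vee)^{H})_R$; since $H$ is reductive one has $(V^\vee)^{H}=(V^{H})^\vee$ with perfect tautological pairing, so each $\eta_V$ is injective. Second, \emph{$\mathrm{Stab}_G(x_0)=H$}: by Chevalley's theorem there is a representation $W$ and a vector $w\in W$ with $\mathrm{Stab}_G(w)=H$ as group schemes; since $w\in W^{H}$, the subgroup scheme of $G$ fixing every vector of every $V^{H}$ lies between $H$ and $\mathrm{Stab}_G(w)=H$. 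Therefore the orbit map is a monomorphism of fppf sheaves $G/H\hookrightarrow\cH'_M$; moreover $G/H$ is representable ($H$ being closed in $G$), of finite type, affine by Matsushima's criterion ($G$ and $H$ reductive, $\operatorname{char}\bQb=0$), and $G\to G/H$ is faithfully flat with all geometric fibres isomorphic to $H$, so $\dim G/H=\dim G-\dim H$.

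The crux is then to show that $G/H\to\cH'_M$ is also an epimorphism of fppf sheaves, hence an isomorphism. For $\eta\in\cH'_M(R)$ the closed subscheme $\{g\in G_R:g\cdot x_0=\eta\}$ of $G_R$ is, by the stabilizer computation, a pseudo-$H_R$-torsor, so one only needs it to be fppf-locally non-empty; by descent this reduces to geometric points, i.e. to showing that a lax-monoidal natural embedding $(-)^{H}\hookrightarrow\Forg$ over an algebraically closed field $\Omega$ has the form $v\mapsto gv$ for some $g\in G(\Omega)$. Here the subspaces $U_V:=\eta_V(V^{H})\subseteq V$ form a functorial, tensor-compatible family with $U_\one=\one$ and $\dim U_V=\dim V^{H}$ --- a ``Pl\"ucker datum'' for the pair $(G,H)$ --- and Drinfel'd's analysis of the Pl\"ucker embedding, in the generality treated by Haines \cite{Hai}, identifies such data (together with the compatible isomorphisms $\eta_V\colon V^{H}\xrightarrow{\sim}U_V$, pinned down by $\ev$, the coevaluation and $\eta_\one=\id$) with the points of $G/H$. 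Carrying out this recognition step --- turning an abstract tensor-compatible family into an honest element of $G(\Omega)$, where the reductivity of $G$ and $H$ and the hypothesis $\Omega=\overline\Omega$ are essential --- is what I expect to be the main obstacle; everything else is formal. Granting it, $\cH_M=\cH'_M\cong G_\dR(M)/G_\crys(M)$ is an affine $\bQb$-variety of finite type, homogeneous under $G_\dR(M)$, of dimension $\dim G_\dR(M)-\dim G_\crys(M)$, as asserted.
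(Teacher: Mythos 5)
Your proposal follows essentially the same route as the paper: identify $\<M\>$ with $\Rep_{\bQb}(G_\dR(M))$, recognize $Z_p\circ\spe$ as the functor of $G_\crys(M)$-invariants, and defer the essential content to the Drinfel'd--Haines description of $\Nat^\otimes((-)^H,\Forg)$ --- the paper invokes exactly this as \cref{T:Haines} (citing \cite{Hai}), which gives representability by the affine closure $\overline{G/H}$ and identifies $\Emb^\otimes$ with the image of $G$. Where you genuinely diverge is the equality $\cH_M=\cH'_M$: the paper deduces it from Haines' theorem combined with the reductivity of $H=G_\crys(M)$ (so that $G/H$ is already affine, equals its affine closure, and the image of $G$ is everything), whereas you prove directly that every lax-monoidal natural transformation is automatically injective, via naturality with respect to evaluation and the perfectness of the pairing $V^H\times(V^\vee)^H\to\bQb$ furnished by linear reductivity of $H$. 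That argument is correct and is a nice, more self-contained treatment of this particular point, as it isolates exactly where reductivity of $G_\crys(M)$ enters; your stabilizer computation via observability of reductive subgroups is also fine. The step you flag as the main obstacle --- producing an actual element of $G(\Omega)$ from an abstract tensor-compatible family --- is indeed the content of Haines' theorem and is not reproved in the paper either. One caveat: your reduction of fppf-epimorphy ``to geometric points'' is looser than what is actually needed, since surjectivity on field-valued points of a not-yet-representable functor does not by itself yield fppf-local sections over arbitrary $R$; Haines' statement (representability of $\Nat^\otimes$ by $\Spec\cO(G/H)$ as a functor on all algebras) is what closes this, so in the end you need his result in the same strength as the paper does.
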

A more precise (but non canonical) version of this statement is the following.
\begin{theorem}\label{T:cH_M is repr precise}
Fix a (non canonical) factorisation of 
$R_\crys$ through $\bQb$ as  in \cref{T:tann for <M>}\eqref{item:tann for <M>:crys factorizes} and consider the induced inclusion $G_\crys(M) \subset G_\dR(M)$ from \cref{T:tann for <M>}\eqref{inclusion groups}. Then the spaces $\cH_M $ and $	\cH'_M$ are both representable by the affine variety
$G_\dR(M)/G_\crys(M)$. Moreover the identification 
\[\cH_M =	\cH'_M = G_\dR(M)/G_\crys(M)\]
is compatible with the action of $G_\dR(M)$.
\end{theorem}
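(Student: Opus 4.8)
The plan is to translate the statement, via tannakian reconstruction, into a purely group-theoretic assertion — a form of Drinfel'd's generalisation of the Plücker embedding, see \cite{Hai} — and then to prove that assertion by testing natural transformations against the regular representation. First I would fix the non-canonical factorisation of $R_\crys$ provided by \cref{T:tann for <M>}\eqref{item:tann for <M>:crys factorizes} and write $G:=G_\dR(M)$, $H:=G_\crys(M)$, viewing $H$ as a closed subgroup of $G$ as in \cref{T:tann for <M>}\eqref{inclusion groups}. Under $R_\dR\colon\<M\>\xrightarrow{\sim}\Rep_\bQb(G)$ the fibre functor $R_\dR|_{\<M\>}$ becomes the forgetful functor $\omega$, and under $R_\crys\colon\<\spe(M)\>\xrightarrow{\sim}\Rep_\bQb(H)$ the specialisation functor $\spe$ becomes restriction along $H\hookrightarrow G$ (this is precisely the content of \cref{T:tann for <M>}\eqref{inclusion groups}). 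Since $Z_p(\spe(N))=\Hom_{\Mot(\obF_p)}(\one,\spe(N))\cong\Hom_{\Rep_\bQb(H)}(\one,R_\dR(N))=R_\dR(N)^H$ for $N\in\<M\>$, the functor $Z_p\circ\spe|_{\<M\>}$ is carried to $V\mapsto V^H\subseteq\omega(V)$ on $\Rep_\bQb(G)$, with its tautological lax-monoidal structure $V^H\otimes W^H\hookrightarrow(V\otimes W)^H$. Thus $\cH'_M=\Nat^\otimes((-)^H,\omega)$ and $\cH_M$ is its subfunctor of injective transformations, and the problem becomes: show this functor is represented by $G/H$, that all such transformations are automatically injective, and that the $G$-actions agree.

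Next I would build the comparison morphism $G/H\to\cH'_M$: for a $\bQb$-algebra $R$ and $g\in G(R)$, let $\eta_g$ have value $V^H\otimes R\hookrightarrow V\otimes R\xrightarrow{\,g\,}V\otimes R$ on $V$. Since $g$ acts diagonally on tensor products and trivially on $\one$, $\eta_g$ is lax-monoidal; since elements of $H(R)$ act trivially on $V^H\otimes R$, one has $\eta_{gh}=\eta_g$ for $h\in H(R)$, so $g\mapsto\eta_g$ descends (fppf-locally, hence as a morphism of functors) to $G/H\to\cH'_M$. Each $\eta_g$ is injective — fppf-locally an injection followed by an isomorphism, and injectivity descends — so the morphism factors through $\cH_M$ and $(G/H)(R)\subseteq\cH_M(R)\subseteq\cH'_M(R)$.

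The crux is surjectivity of $G/H\to\cH'_M$, with unique preimages. Given $\eta\in\cH'_M(R)$, I would evaluate it on the ind-object $\mathcal{O}(G)$ of $\Rep_\bQb(G)$ carrying the right regular representation; its $H$-invariants are $\mathcal{O}(G/H)$, yielding $\eta_{\mathcal{O}(G)}\colon\mathcal{O}(G/H)\otimes R\to\mathcal{O}(G)\otimes R$. Compatibility of the lax-monoidal transformation $\eta$ with the multiplication and unit of the algebra object $\mathcal{O}(G)$ forces $\eta_{\mathcal{O}(G)}$ to be an $R$-algebra homomorphism, and naturality with respect to the left-translation operators — which are morphisms in $\Rep_\bQb(G)$ since left and right translations commute, and which preserve $\mathcal{O}(G/H)$ — forces $\eta_{\mathcal{O}(G)}$ to be equivariant for the left $G$-action; so it is the comorphism of a $G_R$-equivariant map $G_R\to(G/H)_R$, i.e. of a point $\bar g\in(G/H)(R)$. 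Conversely, every $V\in\Rep_\bQb(G)$ embeds into a finite power $\mathcal{O}(G)^{\oplus n}$, the functor $(-)^H$ is left exact, and $\eta$ is natural, so $\eta$ is determined by $\eta_{\mathcal{O}(G)}$; unwinding the definitions gives $\eta=\eta_{\bar g}$, and uniqueness of $\bar g$ is immediate. Hence $\cH_M=\cH'_M=G/H$, $G_\dR(M)$-equivariantly (left multiplication on $G/H$ matching post-composition of $\eta$ with the $G_\dR(M)$-action). To conclude, $H=G_\crys(M)$ is reductive by \cref{T:tann for <M>}, so the homogeneous space $G/H$ is an affine variety of finite type over $\bQb$, and $\dim\cH_M=\dim G/H=\dim G_\dR(M)-\dim G_\crys(M)$; since $\cH_M$ and $\cH'_M$ were defined without reference to the non-canonical factorisation, \cref{T:cH_M is repr} is a consequence of \cref{T:cH_M is repr precise}.

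The main obstacle is this last step: squeezing a single point of $G/H$ out of an arbitrary lax-monoidal natural transformation by testing it on the ind-regular representation, and checking that the resulting comorphism is simultaneously an $R$-algebra homomorphism and left-$G$-equivariant so that it is pinned down — together with the fppf-descent needed both to manufacture the morphism $G/H\to\cH'_M$ from $\bar g\in(G/H)(R)$ and to deduce injectivity. This is exactly the mechanism behind the tannakian Plücker embedding of Drinfel'd and Haines \cite{Hai}; the only genuinely new bookkeeping is identifying the lax functor $Z_p\circ\spe$ of ``algebraic classes modulo $p$'' with invariants under $G_\crys(M)$, which is what makes the homogeneous (rather than torsor) picture appear.
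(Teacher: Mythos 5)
Your proposal is correct and follows the same skeleton as the paper's argument: both begin by using \cref{T:tann for <M>} to transport the problem to $\Rep_\bQb(G_\dR(M))$, identifying $Z_p\circ\spe$ with the $H$-invariants functor for $H=G_\crys(M)\subset G=G_\dR(M)$ and $R_\dR$ with the forgetful functor, so that $\cH'_M=\Nat^\otimes((-)^H,\Forg)$. The difference is what happens next. The paper at this point simply invokes Haines's theorem (\cref{T:Haines}), which says that $\Nat^\otimes((-)^H,\Forg)$ is represented by the affine closure $\overline{G/H}=\Spec(\cO(G/H))$ and that the injective transformations correspond to the image of $G\to\overline{G/H}$; reductivity of $H$ then gives $\overline{G/H}=G/H$ and the equality $\cH_M=\cH'_M$. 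You instead reprove the representability from scratch by testing a lax-monoidal transformation against the ind-regular representation $\cO(G)$, extracting an equivariant algebra homomorphism and hence a point of the quotient. This is essentially the mechanism behind Haines's proof, so you gain self-containedness at the cost of redoing his work. One small caution: your intermediate step produces a priori only a point of $\Spec(\cO(G/H))=\overline{G/H}$, and the identification of this with $G/H$ (and hence the automatic injectivity giving $\cH_M=\cH'_M$) genuinely requires the reductivity of $G_\crys(M)$, which you invoke only at the very end; for a non-reductive $H$ the two functors and the two spaces can differ, so the order of the logic should be adjusted to make clear that affineness of $G/H$ is used before, not after, the conclusion $\bar g\in(G/H)(R)$.
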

\begin{proof}

In light of \cref{T:tann for <M>}, the functor $Z_p$ corresponds to taking invariants
\begin{align*}
		 \Rep_\bQb(G_{\dR}(M))\to \Vect_\bQb \\ 
		 V\mapsto V^{G_\crys(M)}.
\end{align*}
Moreover, the space $\cH'_M$ can now be described as
\begin{align*}
	\cH'_M= \Nat^\otimes ((-)^{G_\crys(M)},\Forg)
\end{align*}
where $\Forg\colon \Rep_\bQb(G_\dR(M))\to\Vect_\bQb$ is the forgetful functor.  

In this context we can use the following theorem of T. Haines. 
To state it, let us  recall that if $G$ is a linear algebraic group and $H$ is a subgroup of $G$, then the affine closure of $G/H$ is defined to be $\overline{G/H}:=\Spec(\cO(G/H))$. 
 It comes equipped with a natural action of $G$ and a $G$-equivariant map $G\to \overline{G/H}$.
 
\begin{theorem}[{\cite[Proposition 2.3, Claim 2.4, Lemma 5.2]{Hai}, \cite{HaiJin}}]\label{T:Haines}
	Let $G$ be a linear algebraic group over an algebraically closed field $k$ and let $H\le G$ be a closed subgroup.
	The space of lax-monoidal natural transformations $\Nat^\otimes((-)^H,\Forg)$ between the two functors $(-)^H,\Forg\colon \Rep_k(G)\to\Vect_k$ is representable by $\overline{G/H}=\Spec(\cO(G/H))$, the affine closure of $G/H$.	
	
	Moreover, the space of embeddings $\Emb^\otimes((-)^H,\Forg)$ corresponds to the image of the canonical map $G\to \overline{G/H}$. 
\end{theorem}

 This result implies that the space $\cH'_M$ is representable by   the affine closure 
	 $\overline{G_\dR(M)/G_\crys(M)} $ 
	and that the space $\cH_M$ is representable by the image of $G_\dR(M)$ in it.

On the other hand, the linear group $G_\crys(M)$ is reductive hence the quotient $G_\dR(M)/G_\crys(M)$ is already affine. In particular it coincides with its affine closure, hence   $\cH'_M= G_\dR(M)/G_\crys(M)$. 
Moreover, the image of $G_\dR(M)$ in this quotient is the whole space, so that $\cH_M= G_\dR(M)/G_\crys(M)$ as well.
\end{proof}

\begin{remark}
	In order to prove representability of $\cH_M$ 
	we needed to factorize $R_\crys$ through $\bQb$ (as in \cref{T:tann for <M>}\eqref{item:tann for <M>:crys factorizes}).
	It would be nice to bypass it and prove the representability directly.
\end{remark}
If one starts with a motive defined over a subfield $K\subset \bQb$, one might be interested to know whether also the space of periods can be defined over $K$. 
In the realm of classical periods, this is true and it has important applications (e.g., for multiple zeta values).
In the $p$-adic context we show that this is also possible. For completeness, we sketch a proof of this result here but it will not be used in the rest of the paper.

\begin{corollary}
	Let $M\in\Mot(K)^{\gr}_K$ be a motive with good reduction defined over $K\subset \bQb$ with coefficients in $K$.
	Define $\cH_M$ as in \eqref{D:of H_M}. Then $\cH_M$ is representable by a variety over $K$.
\end{corollary}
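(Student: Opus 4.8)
The statement asserts that when $M$ is defined over a subfield $K\subset\bQb$ with coefficients in $K$, the homogeneous space $\cH_M$ (which a priori is a $\bQb$-variety) admits a $K$-structure. The natural strategy is Galois descent along $\bQb/K$: produce a descent datum on $\cH_M\otimes_K\bQb$ and invoke effectivity of descent for quasi-projective (here affine) varieties along this (pro-finite, but not finite) extension. First I would record that, since $M\in\Mot(K)^{\gr}_K$, the whole tannakian package descends to $K$: by Conventions \ref{S:conventions}\eqref{extending scalars}, $\<M\>_{/K}\otimes_K\bQb \simeq \<\iota(M)\>_{/\bQb}$, and the de Rham realization $R_\dR$ is already defined on $\Mot(K)_K$, giving a reductive $K$-group $G_\dR^K(M)$ with $G_\dR^K(M)\otimes_K\bQb \simeq G_\dR(M)$. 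Likewise the specialization functor $\spe$ and the lax-monoidal functor $Z_p\circ\spe$ make sense over $K$ (algebraic classes on $X_p$ are defined over the residue field, which is contained in the analogue of $\bFb_p$, and finiteness of the $\Hom$-spaces lets one choose $K$-rational bases compatibly).

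**Key steps.** (1) Define the $K$-functor $\cH_M^K:=\Emb^\otimes(Z_p\circ\spe|_{\<M\>_K},\,R_\dR|_{\<M\>_K})$ on $K$-algebras, exactly as in \cref{D:of H_M} but one level down. (2) Show $\cH_M^K\otimes_K\bQb\simeq\cH_M$: this is a base-change compatibility of the $\Nat^\otimes$/$\Emb^\otimes$ construction, using the equivalence $\<M\>_K\otimes_K\bQb\simeq\<M\>$ and the fact that lax-monoidal natural transformations between functors represented by finitely presented data commute with the (filtered colimit) scalar extension; here one uses that $\cH_M$ is of finite type (\cref{T:cH_M is repr}) so only finitely many objects and morphisms of $\<M\>$ are involved. (3) Prove $\cH_M^K$ is representable over $K$ by a finite-type affine scheme: either repeat the Haines argument of \cref{T:Haines} with $G_\dR^K(M)$ and the $K$-subgroup $G_\crys(M)^K$ (note $G_\crys$ is reductive, so $G_\dR^K(M)/G_\crys(M)^K$ is affine), or, more cheaply, observe that $\cH_M^K$ is a closed subfunctor of a finite product of affine spaces $\prod\Hom_K(Z_p\spe(N_i),R_\dR(N_i))$ cut out by the (finitely many, $K$-rational) tensor-compatibility equations, hence represented by a finite-type affine $K$-scheme, and then apply (2).

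**The obstacle.** The only genuinely delicate point is whether $G_\crys(M)$ — which arises via a \emph{non-canonical} factorization of $R_\crys$ through $\bQb$ (\cref{T:tann for <M>}\eqref{item:tann for <M>:crys factorizes}) — can be arranged to be defined over $K$, or more precisely whether the non-canonicity affects the $K$-structure on the quotient. I expect the clean way around this is to avoid $G_\crys$ altogether and characterize $\cH_M^K$ purely via the \emph{canonical} lax-monoidal functor $Z_p\circ\spe$, exactly as in \cref{D:of H_M}: the point is that $Z_p\circ\spe$, hence the functor $\cH_M$ (and its putative $K$-form $\cH_M^K$), is canonical, and the role of \cref{T:cH_M is repr precise} is merely to \emph{compute} it, not to define it. So the actual proof is: (i) define $\cH_M^K$ by the canonical formula over $K$; (ii) establish the base-change isomorphism $\cH_M^K\otimes_K\bQb\simeq\cH_M$ by the colimit/finite-type argument; (iii) conclude representability over $K$ by faithfully flat descent from the known representability over $\bQb$, or directly by the closed-subfunctor-of-affine-space argument. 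I would present it in this order, flagging that the finite-type hypothesis from \cref{T:cH_M is repr} is what makes the scalar-extension compatibility in step (ii) work, and that since the paper only wants a sketch, the descent step (iii) can be stated with a reference to standard effectivity of descent for affine schemes.
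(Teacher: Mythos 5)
Your proposal is correct and follows essentially the same route as the paper: define the functor $\cH_{M,K}$ over $K$ by the canonical formula $\Emb^\otimes(Z_p\circ\spe|_{\<M\>},R_\dR|_{\<M\>})$, check that its base change to $\bQb$ agrees with $\cH_{\overline M}$ using the two properties of scalar extension from the Conventions, and conclude by Galois descent from the representability already established over $\bQb$. Your observation that one must avoid the non-canonical $G_\crys$ and work only with the canonical lax-monoidal functor is precisely the point the paper relies on (its proof never invokes a $K$-form of $G_\crys$), so no further comparison is needed.
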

\begin{proof}	
	In the proof of \cref{T:cH_M is repr precise} we never used the fact that $M$ was a motive defined over $\bQb$, only that the coefficients were $\bQb$.\footnote{Beware though that the groups $G_\dR$ and $G_\crys$ are sensitive to the field of definition and so is the space of periods $\cH_M$ (think of a CM elliptic curve defined over $\bQ$ where the Galois group is either a non split torus or its normalizer; see \cite[7.6.4.1]{Andmot}).} 
	In particular, denoting by $\overline M$ the motive $M$ viewed as a motive with coefficients in $\bQb$, we have a variety  	\[ \cH_{\overline M}:=\Emb^\otimes(Z_p\circ\spe |_{\<\overline M\>}, R_\dR|_{\<\overline M\>}) \]
 over $ \Spec(\bQb)$. 
	The goal is to show that this variety descends to $K$.
		
	Consider the following functor of points
	\[ \cH_{M,K}:=\Emb^\otimes(Z_p\circ\spe |_{\< M\>}, R_\dR|_{\< M\>}).\]
	There is a natural transformation
	\[\cH_{M,K}\times_K \Spec(\bQb) \to \cH_{\overline M}.\]
	It can be  checked that this is an isomorphism when evaluated at any $\bQb$-algebra using the two basic properties of the extension of scalars recalled in
	\cref{S:conventions}\eqref{extending scalars}.
	
	By Galois descent we can conclude that $\cH_{M,K}$ is also representable by a variety over $K$.
\end{proof}

\begin{definition}\label{BO point}
 We define the Berthelot point
\[ \fcB\in \cH_M(\bQb_p)\]
as the point corresponding to the following natural transformation
\[\cl_p(\spe(\cdot)) \colon Z_p(\spe(\cdot))\otimes \bQb_p \longrightarrow R_\crys(\spe(\cdot))\simeq R_\dR(\cdot)\otimes \bQb_p \] where the last identification is the Berthelot comparison isomorphism \eqref{Eq:the Berthelot compar}.

We define the algebra of motivic André's $p$-adic periods of $\<M\>$ as 
\[\cAp(\<M\>):=\cO(\cH_M).\]
Evaluating functions at the point $\fcB\in\cH_M(\bQb_p)$ gives a map of algebras 
	\[ \eval_{B}\colon \cAp(\<M\>)\to \obQ_p.\]
\end{definition}
 
 \begin{remark}\label{r:surj}
 	(1) The map $\eval_B$ is the $p$-adic analog of the integration 
 	\[\int\colon \cP^\mathrm{mot}_\bC(M)\to \bC\]
 	 from the classical complex case.
	
	(2) By its very construction, the image of $\eval_B$ is  the algebra $\cP_p(\<M\>)$ from \cref{D:field of p-adic periods of <M>} which shows that $\cP_p(\<M\>)$ is finitely generated. 
\end{remark}

\begin{proposition}\label{T:trdeg <= dim H_M}
Let  $\cP_p(\<M\>)$ be the algebra introduced in  \cref{D:field of p-adic periods of <M>}. Then 
	we have the inequality
	\begin{align}\label{Eq:trdeg<=dimH_M}
		 \trdeg_\bQb(\cP_p(\<M\>))\le \dim(\cH_M).
	\end{align}
\end{proposition}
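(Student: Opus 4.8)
The plan is to deduce this purely from the structural facts already at our disposal: by \cref{T:cH_M is repr} the space $\cH_M$ is an affine variety of \emph{finite type} over $\bQb$, so $\cAp(\<M\>)=\cO(\cH_M)$ is a finitely generated $\bQb$-algebra, and by \cref{r:surj}(2) the evaluation map $\eval_B\colon \cAp(\<M\>)\to\obQ_p$ at the Berthelot point $\fcB\in\cH_M(\bQb_p)$ has image exactly $\cP_p(\<M\>)$. So the statement becomes the elementary assertion that the image of the coordinate ring of a finite-type affine variety under a $\bQb$-point has dimension at most that of the variety.

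First I would note that $\cP_p(\<M\>)$, being a subring of the field $\obQ_p$, is a finitely generated integral domain over $\bQb$; consequently $\ker(\eval_B)$ is a prime ideal of $\cAp(\<M\>)=\cO(\cH_M)$ and $\cP_p(\<M\>)\cong \cAp(\<M\>)/\ker(\eval_B)$. Hence $Z:=\Spec\big(\cAp(\<M\>)/\ker(\eval_B)\big)$ is an integral closed subscheme of $\cH_M$, of finite type over $\bQb$, with function field $\Frac\big(\cP_p(\<M\>)\big)$. For an integral scheme of finite type over a field the Krull dimension coincides with the transcendence degree of the function field over that field, so
\[
\trdeg_\bQb\big(\cP_p(\<M\>)\big)=\trdeg_\bQb\Frac\big(\cP_p(\<M\>)\big)=\dim Z\le\dim\cH_M,
\]
the inequality holding because $Z$ is a closed subscheme of $\cH_M$. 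This is precisely \eqref{Eq:trdeg<=dimH_M}.

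There is no genuine obstacle in this argument: all the real content sits in \cref{T:cH_M is repr} (representability and finite-typeness of $\cH_M$) and in \cref{r:surj} (surjectivity of $\eval_B$ onto $\cP_p(\<M\>)$), both of which have already been established, and the remaining ingredient — equality of Krull dimension and transcendence degree of the fraction field for a finitely generated domain over a field — is standard commutative algebra. I would only add a sentence pointing out that this is the exact $p$-adic counterpart of the mechanism producing the classical bound $\trdeg_\bQb\cP^{\mathrm{mot}}_\bC(M)\le\dim\cT_M$: a field-valued point of a finite-type affine variety cuts out a closed subvariety whose dimension bounds the transcendence degree of its residue field.
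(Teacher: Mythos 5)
Your proof is correct and is essentially the paper's own argument: the paper deduces the bound directly from the surjection $\eval_B\colon\cO(\cH_M)\twoheadrightarrow\cP_p(\<M\>)$ of \cref{r:surj}, and its alternative formulation (the transcendence degree of the residue field of the point $\fcB$ is bounded by the dimension of its Zariski closure in $\cH_M$) is exactly the closed-subscheme mechanism you spell out. You have merely filled in the standard commutative algebra that the paper leaves implicit.
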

\begin{proof}
	As explained in  \cref{r:surj}, $\cO(\cH_M)$ surjects onto $\cP_p(\<M\>)$, which immediately gives the statement.	
	
	Alternatively, notice that the fraction field of $\cP_p(\<M\>)$ is the smallest subfield~ $L$ of $\bQb_p$ for which the map  ensuing from Berthelot's comparison isomorphism, 
	\[ \fcB(N)\colon Z_p(N)\otimes_\bQb L \to R_\dR(N)\otimes_\bQb L,\]
	is defined for all $N\in\<M\>$.
	In other words, it is the field of definition of the point $\fcB\in \cH_M(\bQb_p)$ and therefore it is also the residue field of the point $\fcB$.
	We conclude using the fact that the residue field of any point of $\cH_M$ has transcendence degree bounded above by the dimension of its Zariski closure in $\cH_M$.
\end{proof}
The above constructions give a tannakian framework for the study of the algebra~ $\cP_p(\<M\>)$. Keeping \cref{tensor periods} in mind our next task is to establish such a framework for  $\cP_p(M)$, the algebra of Andr\'e's $p$-adic periods of $M$, introduced in
 \cref{D:algebra of periods of M}. To do so we will start by giving a motivic interpretation of the numbers $[M,v,\xi]$, introduced in \cref{period triples}, as matrix coefficients.

\begin{definition}\label{motivic periods}
Fix a motive $N\in \<M\>$, a vector $v \in Z_p(\spe(N))$ and a covector $\xi \in R_\dR(N)^{\vee}$. To such a triple we associate a function  $[N,v,\xi]^{\mathrm{mot}} \in\cO(\cH_M) $, called the motivic $p$-adic period associated with the triple, defined on $A$-points, where $A$ is a $\obQ$-algebra, as
\[ \cH_M(A):=\Emb^\otimes(Z_p\circ \spe|_{\<M\>}, R_\dR|_{\<M\> })(A)  \longrightarrow A  \hspace{1cm} c \mapsto \xi(c(v)). \]

The algebra $\cAp(N)$ of motivic $p$-adic periods of $N$  is defined as the subalgebra 
	\[ \cAp(N) \subset \cO(\cH_M) \]
	 generated by the functions $[N,v,\xi]^{\mathrm{mot}}$, for all possible choices of $v$ and $\xi$.
\end{definition}
\begin{remark}\label{surj second}
 Consider the map of algebras
\[ \eval_{B}\colon \cAp(M)\to \obQ_p\]
given by evaluating functions at the point $\fcB\in\cH_M(\bQb_p)$. 
Then one has \[\eval_{B} ([N,v,\xi]^{\mathrm{mot}}) = [N,v,\xi]\] by its very construction; see \cref{period triples}.
In particular we have \[\eval_{B} (\cAp(N))  = \cP_p(N),\] see 
 \cref{D:algebra of periods of M}.

Notice also that the algebra $\cAp(N)$ is finitely generated. Indeed it is  generated by the functions $[N,v,\xi]^{\mathrm{mot}}$, with $v$ and $\xi$ varying respectively in two fixed basis of the vector spaces $ Z_p(\spe(N))$ and   $ R_\dR(N)^{\vee}$.
\end{remark}

\begin{proposition}\label{P:surjective HM to HN}
	Let $N$ be a motive in $\<M\>$. Consider the natural map \[\cH_M \longrightarrow \cH_N\]
	 induced by restricting $\Emb^\otimes(Z_p\circ \spe|_{\<M\>}, R_\dR|_{\<M\> })$ from the category $\<M\>$ to the category $\<N\>$. Then this map is surjective  and in particular the induced map on functions
	 \[ \cO(\cH_N) \hookrightarrow  \cO(\cH_M) \] is an injection of algebras.
\end{proposition}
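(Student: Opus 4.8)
The plan is to transport everything through the representability results \cref{T:cH_M is repr,T:cH_M is repr precise}, which identify these homogeneous spaces with quotients of tannakian groups. First I would record the purely tannakian input. Since $N\in\<M\>$, the subcategory $\<N\>$ is a full tannakian subcategory of $\<M\>$ (it is closed under subquotients by \cref{S:conventions}\eqref{not:generation}), and applying the tensor functor $\spe$ shows that $\<\spe(N)\>$ is likewise a full tannakian subcategory of $\<\spe(M)\>$. By the faithful-flatness criterion in \cite[Proposition 2.21]{Del-Mil-tannak}, restriction of fiber functors then yields surjections of algebraic groups $G_\dR(M)\twoheadrightarrow G_\dR(N)$ and $G_\crys(M)\twoheadrightarrow G_\crys(N)$.

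Next I would use equivariance. Straight from the functor-of-points description in \cref{D:of H_M}, the map $\cH_M\to\cH_N$ sends a tensor embedding $c$ on $\<M\>$ to its restriction $c|_{\<N\>}$, and the $G_\dR(M)$-action on both sides is post-composition with a tensor automorphism of the de Rham fiber functor; restricting such an automorphism from $\<M\>$ to $\<N\>$ is exactly the homomorphism $G_\dR(M)\to G_\dR(N)$ above. Hence $\cH_M\to\cH_N$ is $G_\dR(M)$-equivariant for the action of $G_\dR(M)$ on $\cH_N$ through its surjection onto $G_\dR(N)$. By \cref{T:cH_M is repr} (applied to $N$, which has good reduction since $N\in\<M\>$), $\cH_N$ is a homogeneous space under $G_\dR(N)$, hence under $G_\dR(M)$ via the surjection. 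Since $\cH_M$ is nonempty (it carries the Berthelot point $\fcB$, and in any case a $\bQb$-point coming from $G_\dR(M)/G_\crys(M)$), the image of $\cH_M\to\cH_N$ is a nonempty, $G_\dR(M)$-stable, constructible subset of the homogeneous space $\cH_N$; the $G_\dR(M)$-orbit of one of its $\bQb$-points is already all of $\cH_N(\bQb)$, i.e.\ every closed point, and a constructible subset of a finite-type $\bQb$-scheme containing all closed points is the whole scheme. So $\cH_M\to\cH_N$ is surjective. (Alternatively: fixing a factorisation of $R_\crys$ through $\bQb$ on $\<M\>$ and restricting it to $\<N\>$, \cref{T:cH_M is repr precise} identifies this map with $G_\dR(M)/G_\crys(M)\to G_\dR(N)/G_\crys(N)$ induced by the two surjections, which is manifestly surjective.)

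Finally I would conclude: a surjective morphism of reduced affine varieties induces an injection on coordinate rings, because a regular function on $\cH_N$ pulling back to $0$ vanishes on the image $\cH_N$ and is therefore $0$. The only delicate point — the closest thing to an obstacle — is keeping track of the non-canonical factorisations of $R_\crys$ used to endow these spaces with $\bQb$-structures; the equivariance argument is arranged precisely to sidestep this, relying only on the canonical surjection $G_\dR(M)\twoheadrightarrow G_\dR(N)$ and on \cref{T:cH_M is repr}, so that the proposition comes out as a formal corollary of the earlier representability theorems.
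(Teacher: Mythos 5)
Your proposal is correct and follows essentially the same route as the paper: the inclusion $\<N\>\subset\<M\>$ yields a surjection $G_\dR(M)\twoheadrightarrow G_\dR(N)$, the restriction map $\cH_M\to\cH_N$ is equivariant, and homogeneity of $\cH_N$ (via \cref{T:cH_M is repr}) forces surjectivity. The paper's proof is just a terser version of this; your extra care with constructibility, nonemptiness, and the non-canonical factorisations of $R_\crys$ fills in details the paper leaves implicit.
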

\begin{proof}
The inclusion $\<N\>\subset \<M\>$ corresponds to a surjective map of de Rham tannakian groups $G_\dR(M)\to G_\dR(N)$ and
the  natural map $\cH_M \longrightarrow \cH_N$ is equivariant with respect to it. By \cref{T:cH_M is repr} these spaces are homogenous under these actions, hence we have the surjectivity.
\end{proof}
\begin{remark} 
 The function
$[N,v,\xi]^{\mathrm{mot}}$ introduced in \cref{motivic periods} makes sense both in $\cO(\cH_N) $ and in $\cO(\cH_M) $. By construction the natural map $ \cO(\cH_N) \hookrightarrow  \cO(\cH_M) $   sends $[N,v,\xi]^{\mathrm{mot}}\in \cO(\cH_N)$ to $[N,v,\xi]^{\mathrm{mot}}\in  \cO(\cH_M)$. For this reason we did not put any symbol $M$ in the definition of $[N,v,\xi]^{\mathrm{mot}}\in  \cO(\cH_M)$. Notice also that the injectivity of the natural map $ \cO(\cH_N) \hookrightarrow  \cO(\cH_M) $ implies that the relations between the motivic $p$-adic periods do not vary if we enlarge the category.
\end{remark}
\begin{proposition}\label{C:trdegM<=dimSpecA_p}
	We have an inequality
\begin{align}\label{Eq:trdegM<=dimSpec}
		 \trdeg_\bQb(\cP_p(M))\le \trdeg_\bQb(\cAp(M)) = \dim\Spec(\cAp(M)).
\end{align}
\end{proposition}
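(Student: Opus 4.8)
The plan is to derive \eqref{Eq:trdegM<=dimSpec} formally from \cref{surj second} together with the standard dimension theory of finitely generated algebras over a field. First I would record the two facts from \cref{surj second} that do all the work: the algebra $\cAp(M)$ is a finitely generated $\bQb$-algebra, and the evaluation map $\eval_B$ restricts to a \emph{surjection} $\eval_B\colon \cAp(M)\twoheadrightarrow\cP_p(M)$. In particular $\cP_p(M)$ is itself a finitely generated $\bQb$-algebra, and being a subring of the field $\obQ_p$ it is an integral domain.

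For the equality on the right of \eqref{Eq:trdegM<=dimSpec}, I would invoke the classical theorem (a consequence of Noether normalisation) that for a finitely generated algebra $A$ over a field $k$ one has $\dim\Spec(A)=\max_{\mathfrak p}\trdeg_k\bigl(\Frac(A/\mathfrak p)\bigr)$, the maximum being over the minimal primes of $A$; this maximum is exactly what $\trdeg_\bQb(\cAp(M))$ denotes. For the inequality on the left, the surjection $\eval_B\colon\cAp(M)\twoheadrightarrow\cP_p(M)$ gives a closed immersion $\Spec(\cP_p(M))\hookrightarrow\Spec(\cAp(M))$, whence $\dim\Spec(\cP_p(M))\le\dim\Spec(\cAp(M))$; applying the same classical theorem to the finitely generated domain $\cP_p(M)$ identifies $\dim\Spec(\cP_p(M))$ with $\trdeg_\bQb(\cP_p(M))$. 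Putting these together produces the chain
\[\trdeg_\bQb(\cP_p(M))=\dim\Spec(\cP_p(M))\le\dim\Spec(\cAp(M))=\trdeg_\bQb(\cAp(M)),\]
which is precisely \eqref{Eq:trdegM<=dimSpec}.

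There is essentially no real obstacle here; the statement is a formal wrap-up of \cref{surj second} plus commutative algebra. The one point I would take a sentence to address is the meaning of $\trdeg_\bQb$ for $\cAp(M)$, which a priori is only reduced (it is a subalgebra of the reduced ring $\cO(\cH_M)$) and need not be a domain. This causes no trouble: by \cref{T:cH_M is repr} the space $\cH_M$ is a homogeneous space under $G_\dR(M)$, so its connected components are permuted transitively and are all isomorphic, in particular equidimensional of dimension $\dim G_\dR(M)-\dim G_\crys(M)$; one may therefore either read $\trdeg_\bQb(\cAp(M))$ as the maximum-over-minimal-primes above, or simply restrict to a connected component, and both equalities hold verbatim. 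When $G_\dR(M)$ is connected --- the case arising in all the examples of \cref{SS:periods} --- the algebra $\cAp(M)$ is already a domain and no such comment is needed.
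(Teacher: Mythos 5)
Your argument is correct and is essentially the paper's own: the proof given there simply says it is analogous to that of \cref{T:trdeg <= dim H_M}, i.e.\ the surjection $\eval_B\colon\cAp(M)\twoheadrightarrow\cP_p(M)$ from \cref{surj second} immediately gives the inequality. Your version merely spells out the underlying commutative algebra (Noether normalisation and the closed immersion) and the harmless reduced/non-domain point, which is a reasonable elaboration rather than a different route.
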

\begin{proof}
The argument is analogous to the one in the proof of \cref{T:trdeg <= dim H_M}.
\end{proof}

 We end the section with some examples continuing those discussed in \cref{SS:periods}.

\begin{example}\label{Eg:H_M for E=CM new} (Continuing \cref{Eg:H_M for E=CM}.)
	Let $E/\bQb$ be a CM elliptic curve  with  supersingular reduction $E_p$ modulo $p$.
	Denote by $L$ its CM field. 
	Consider the motive $M:=\cEnd(E)$.
	
	We have $G_\dR(E) \simeq \bG_m^2$, $G_\dR(M)=\bG_m$ and $G_\crys(M)=1$. 
		The space of periods is  $\cH_M \simeq \bG_m$.
		The matrix coefficients of $M$ generate $\cO(\cH_M)$ because $M$ corresponds to the natural representation of $G_\dR(M)\simeq \bG_m$. 
		As such we get that $\cAp(M)=\cO(\cH_M)\simeq\bQb[t,t\inv]$ and hence that $\cP_p(M)=\cP_p(\<M\>)$, i.e., the periods of $M$ generate the periods of $\<M\>$.
\end{example}

\begin{example}\label{Eg:H_M for E=ordinary new} (Continuing \cref{Eg:H_M for E=ordinary}.)
	Let $E/\bQb$ be a non CM elliptic curve and consider the endomorphism motive $M:=\cEnd(E)$.
	We have $G_\dR(M)\simeq \PGL(2)$.
	Now consider two cases: 
	\begin{itemize}
		\item $E_p$ supersingular. We have  $G_\crys(M)=1$ hence $\cH_M \simeq \PGL(2)$.
		Moreover, we have $\cAp(M)=\cO(\cH_M)$ the reason being that matrix coefficients for a faithful representation of a reductive group generate the matrix coefficients of all representations.
		In particular, we obtain $\cP_p(M)=\cP_p(\<M\>)$.

		\item $E_p$ not supersingular. Here we have $G_\crys(M)\simeq \bG_m$ (we can see it as the maximal torus $T$ in $\PGL(2)$)
		hence we have $\cH_M\simeq \PGL(2)/\bG_m$.
		Here too, by computing the matrix coefficients map 
		\[ V^T\boxtimes V^*\to \cO(\PGL(2)/T) \]
		for $V$  the adjoint representation of $\PGL(2)$, one can check the equality $\cAp(M) = \cO(\cH_M)$.
		In particular, one has $\cP_p(M) = \cP_p(\<M\>)$, i.e., the periods of $M$ generate the periods of $\<M\>$.
	\end{itemize}
\end{example}

\begin{remark}
	We briefly interrupt the series of examples to highlight a particular point about matrix coefficients which might clarify the difference between the algebras $\cAp(M)$ and $\cAp(\<M\>)$.
	
	Let $G$ be a reductive group over $k$, an algebraically closed field of characteristic zero.
	For any representation $V$ of $G$ we have the matrix coefficients map
	\begin{align*}
		m_V\colon &V\boxtimes V^\vee \to \cO(G)\\
		& v\boxtimes \xi\mapsto (g\mapsto \xi(gv)).
	\end{align*}
	
	The Peter--Weyl theorem gives an isomorphism of $G\times G$-modules
	\begin{align}	\label{eq:Peter-Weyl iso}
	\bigoplus_{V\text{ irr}} V\boxtimes V^\vee \simeq  \cO(G)
	\end{align}
	where $V$ runs over all irreducible representations of $G$.
	Moreover, for any faithful irreducible representation $V$, the summand $V\boxtimes V^\vee$ generates $\cO(G)$ as an algebra (see \cite[\S3.5 Theorem]{Wat}).
	
Now let $H\le G$ be a closed subgroup. The map \eqref{eq:Peter-Weyl iso} restricts to an isomorphism of $G$-modules
\[ \bigoplus_{V\text{ irr}} V^H\boxtimes V^\vee \simeq  \cO(G/H).\]

However, now a summand $V^H\boxtimes V^\vee$ might not generate the algebra $\cO(G/H)$. 
An extreme example is for irreducible representations $V$ of $G$ such that $V^H=\{0\}$. This is the case for $\bG_m\le \SL(2)$ and any even dimensional irreducible representation. A less extreme example is mentioned below for $\bG_m\le \PGL(2)$ and $\Sym^4k^2$.
We do not know for which irreducible representations $V$ the matrix coefficients of $V^H\boxtimes V^\vee$ generate the algebra $\cO(G/H)$. 

In the motivic setting, the algebra $\cAp(\<M\>)$ is of the form $\cO(G/H)$ whereas $\cAp(M)$ corresponds to the subalgebra generated by $V^H\boxtimes V^\vee$ where $V=R_\dR(M)$ viewed as a $G=G_\dR(M)$-module (see \cref{motivic periods}).

This phenomenon does not happen in the classical case of complex periods because $\cP_\bC^{\mathrm{mot}}(\<M\>)$ is of the form $\cO(G)$.
\end{remark}

\begin{example}\label{eg:sym4 no Kunnet for periods}
	Let $E$ be a non CM elliptic curve with ordinary reduction modulo~ $p$. Consider the motive $N:=(\Sym^4\fh^1(E))(2)\in \<M\>$ where $M=\cEnd(E)$. 
	We recall that $G_\dR(M)=\PGL(2)$ and $G_\crys(M)=\bG_m$.
	It is also the case that $\<N\>=\<M\>$ and so $\cH_N=\cH_M \simeq \PGL(2)/T$.
	By a direct, cumbersome computation of the matrix coefficients map
	\[ V^T\boxtimes V^*\to \cO(\PGL(2)/T) \]
	for $V=\Sym^4(\bQb^2)$ we find that $\cAp(N)\subsetneq \cO(\cH_N) = \cAp(\<N\>)$. 

	 
	If the analog of the Grothendieck period conjecture holds, namely if the evaluation map $\eval_{B}\colon \cAp(\<N\>) \to \cP_p(\<N\>)$ is an isomorphism (see \cref{P:equiv of GPC}), then we would have $\cP_p(N)\subsetneq \cP_p(\<N\>)$.
\end{example}

\section{The ramification condition}\label{SS:ramification}
We start this section by recalling an example (see \cref{Eg:Andre CM ell curve}), due to Andr\'e, which in particular shows that   the Berthelot point
 $\fcB\in \cH_M(\bQb_p)$ from \cref{BO point} is not always a generic point of the space $\cH_M$ introduced in \cref{D:of H_M}.
 
 Following again Andr\'e, who studied similar questions for the $H^1$ of abelian varieties, one can hope that this kind of examples is sporadic; see \cref{construction andre}.
  He formulated a ramification condition for  the $H^1$ which allows one to avoid this pathological example.

 The goal of this section is to find a generalization of this ramification condition which makes sense for any motive (\cref{D:p ramifies in <M>}).
 We show that such a condition is generically satisfied (\cref{C:p does not ramify in <M>}).
 
\noindent In order to formulate this ramification condition we need to work with coefficients in $\bQ$ (contrary to the previous sections, where the field of coefficients was $\bQb$).

\begin{example}[{\cite[I.4.6.4]{Andre-p}}]\label{Eg:Andre CM ell curve}
Let $E$ be a CM elliptic curve over $\bQb$ with  $\End(E)\otimes_\bZ\bQ = \bQ[\sqrt{-3}]$.  Let $p$ be a place above $3$. 
	Then the reduction modulo $p$ of $E$ is   supersingular.
	Consider the motive $M:=\cEnd(\fh^1(E)) = \fh^1(E)\otimes \fh^1(E)^\vee$ and its reduction modulo $p$.
 	In \textit{loc. cit.}, André shows  that all the $p$-adic periods of $M$ are algebraic numbers (see the ramified case of \cref{Eg:H_M for E=CM} for more details). On the other hand, the homogeneous space $\cH_M$  has dimension one (\cref{Eg:H_M for E=CM new}). This means that the transcendence degree of the algebra generated by the periods is strictly smaller than the dimension of $\cH_M$  (equivalently, the Berthelot point  does not live on the generic point of $\cH_M$).
	
	In the above example the prime number $ 3$ ramifies in the number field ${\bQ[\sqrt{-3}] = \End(E)\otimes_\bZ\bQ}$.  
	Andr\'e conjectures that this ramification condition should be the only situation where all the periods of the motive $M:=\cEnd(\fh^1(E))  $ are algebraic (see \cref{construction andre} for more details).
\end{example}

\begin{remark}(Relation to Gamma functions.)
	In the classical complex setting, periods of abelian CM motives are related to products of special values of the Gamma function (the Gross--Deligne conjecture \cite[p.209]{Gross}).
	There are explicit relations between such special values  \cite[Appendix]{Deligne-valL-periodes}
	which are conjectured to be the only ones.
	Such a prediction is coherent with the Grothendieck period conjecture.
	
	In the $p$-adic context the situation is more subtle as \cref{Eg:Andre CM ell curve}  shows. There seems to be a discrepancy between the ramified and the unramified case, namely  in the ramified situation the algebricity of the $p$-adic periods is not fully explained by motives.
	The reason seems to boil down to the fact that the $p$-adic periods have different expressions in terms   of special values of the $p$-adic gamma function in the two cases (see  \cref{Eg:H_M for E=CM} for the two expressions).

\end{remark}

\begin{remark}(André's Betti lattices and ramification.)\label{construction andre} In \cite{Andre1} (and later \cite[I.5.3]{Andre-p}),
Andr\'e investigated the possibility of constructing a second  $\bQb$\nobreakdash-structure to crystalline cohomology -- especially for the $H_\crys^1(A)$, with $A$ an abelian variety -- to be compared to the  $\bQb$-structure given by the de Rham cohomology $H^1_\dR(\tilde{A})$ of a given lifting $\tilde{A}$. 
Such a canonical structure should be called "Betti", and denoted $H^1_B$. 
It should come with two properties: first the endomorphisms of $A$ should act on $H^1_B$ and second any algebraic classes on any power of $A$ should be rational with respect to this Betti structure.
He proposed a construction in two cases:
\begin{enumerate}
\item If $A$ is ordinary \cite[\S 5]{Andre1}. In this case one can take the de Rham cohomology of the Serre--Tate lifting.
 (This is interesting if one started with a non CM lift $\tilde{A}$  -- in particular not the Serre--Tate lifting itself).
\item  If $A$ is supersingular. In this case he shows  that there exists a unique Betti structure $H^1_B(A)$ with the property that the endomorphisms of $A$ preserve it and that $\wedge^2H^1_B(A)(1)\subset H_\crys^2(A)(1)$ is the space of algebraic classes 
\cite[Proposition I.5.3.3]{Andre-p}.
\end{enumerate}
The proposed $\bQb$-structure $H^1_B$ is related to the one we are studying 
in an indirect way; see \cref{Eg:H_M for E=ordinary} for the supersingular reduction case and   \cref{Eg:two ell curves} for the ordinary reduction case.

In \cite[Remark in \S 5]{Andre1} Andr\'e speculates (at least in the case of elliptic curves) that the two $\bQb$-structures of $H_\crys^1(A)$, Betti and de Rham, should not coincide.
In \textit{loc. cit.} he pointed out that  if $A$ is supersingular and $\tilde{A}$ is  CM then these two $\bQb$-structures are related through  values of the $p$-adic Gamma function (by \cite{Ogus}, based on computations of Coleman \cite{Colem}). 

On the other hand, as he   afterwards noticed, those special values of the Gamma function might turn out to be algebraic by the Gross--Koblitz formula \cite[I.4.6.4]{Andre-p}.
He conjectured that this phenomenon should be pathological and formulated the following conjecture \cite[Conjecture I.5.3.10]{Andre-p}
 (the section of the conjecture is written  for elliptic curves "for simplicity", as the author says  in \textit{loc. cit.} ).
  If  $A$ is supersingular  and $\tilde{A}$ is not CM (or if the prime number $p$ does not ramify in the CM field) then  the two $\bQb$-structures of $H_\crys^1(A)$, Betti and de Rham, should not coincide.
\end{remark}

\begin{definition}\label{D:CM motive}
	A motive $N\in\Mot(\bQb)_\bQ$ is called CM if $\End(N)$ is a number field of degree equal to the rank of $N$.
\end{definition}
The name CM is justified by the remark below. Those motives are needed in order to generalize  Andr\'e's  ramification condition from abelian varieties to general motives; see the definition which follows.

\begin{remark}(Not needed in the sequel.) The standard conjectures predict that if the motive $N$ is a CM motive with the above definition, then $\End(N)$ has a positive involution, hence it should indeed be a CM number field.
\end{remark}
\begin{definition}\label{D:p ramifies in <M>}
	Let $M\in\Mot(\bQb)_\bQ$ be a motive, let $\<M\>$ be the tannakian category it generates  and let $p$ be a prime number.
	We say that $p$ ramifies in $\<M\>$ if there exists a CM motive $N\in\<M\>$ such that the prime  $p$ ramifies in the number field  $\End(N)$. 
	If $p$ does not ramify in $\<M\>$ we say that the motive $M$ is unramified above~$p$.
\end{definition}

\begin{remark}
	
	In the \cref{Eg:H_M for E=ordinary,Eg:two ell curves} the prime $p$ is not ramified in $\<M\>$. The reason is that the tannakian groups are $\PGL(2)$ and $\GL(2)$ and so the only CM objects in $\<M\>$ are the Tate twists (see for example the proof of the next proposition). For Tate twists the ramification condition is empty as $\End(\one(n)) = \bQ$.
\end{remark}

\begin{proposition}\label{P:field L for all CM obj}
	Let $M\in\Mot(\bQb)_\bQ$ be a motive. Then there exists a number field $L_M$ such that for all CM objects $N\in\<M\>$ we have $\End(N)\hookrightarrow L_M$.
\end{proposition}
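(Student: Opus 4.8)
The plan is to pass to the tannakian fundamental group and to exploit the fact that a CM object is necessarily ``abelian'', in the sense that the derived subgroup of this group acts trivially on it; the endomorphism fields of the simple objects killed by the derived subgroup are then all controlled by one finitely generated Galois module.

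First I would fix an embedding $\sigma\colon\bQb\hookrightarrow\bC$ and use the Betti realization $R_B$ of \cref{def:realization} to view $\<M\>$ as a neutral tannakian category over $\bQ$. By Jannsen's semisimplicity theorem \cite{Jann} (available under our standing hypothesis $\sim_{\ho}=\sim_{\num}$; see \cref{S:conventions}) this category is semisimple, so $\<M\>\simeq\Rep_\bQ(G)$ for a reductive algebraic group $G$ of finite type over $\bQ$, and the rank of a motive in $\<M\>$ equals the $\bQ$-dimension of the underlying representation. Given a CM object $N$ with $E:=\End(N)$, I would first remark that, $E$ being a field, $N$ is simple, and then extend scalars: over $\bQb$ the representation $R_B(N)\otimes_\bQ\bQb$ is again semisimple with endomorphism algebra $E\otimes_\bQ\bQb\cong\bQb^{[E:\bQ]}$, so, because $[E:\bQ]=\dim_\bQ R_B(N)$, it is a direct sum of $[E:\bQ]$ pairwise non-isomorphic characters of $G_{\bQb}$. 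In particular $G^{\mathrm{der}}$ acts trivially on $R_B(N)$, hence $N$ lies in the tannakian subcategory $\Rep_\bQ(D)\subseteq\<M\>$, where $D:=G/G^{\mathrm{der}}$ is commutative and reductive, hence of multiplicative type over $\bQ$.

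Next I would use the representation theory of a group $D$ of multiplicative type: its simple objects over $\bQ$ correspond to the $\Gal(\bQb/\bQ)$-orbits in the finitely generated abelian group $\Lambda:=\Hom(D_{\bQb},\bG_m)$, and the endomorphism field of the simple object attached to an orbit $O$ is the residue field of $O$, i.e.\ the fixed field of the stabilizer in $\Gal(\bQb/\bQ)$ of any $\chi\in O$. Since each element of $\Lambda$ is defined over a finite extension of $\bQ$ and $\Lambda$ is finitely generated, the action of $\Gal(\bQb/\bQ)$ on $\Lambda$ factors through $\Gal(L_M/\bQ)$ for some finite Galois extension $L_M/\bQ$. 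Each of the above stabilizers then contains $\Gal(\bQb/L_M)$, so each of the residue fields embeds into $L_M$; this single $L_M$ therefore works for all CM objects of $\<M\>$ at once.

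The only non-formal point is the reduction to $D$: recognizing a CM object as a simple representation of the multiplicative-type quotient and converting $\End(N)$ into the residue field of a Galois orbit of characters. This rests on the semisimplicity of $\<M\>$ — so that $R_B(N)\otimes_\bQ\bQb$ is automatically multiplicity-free once its endomorphism algebra is known to be \'etale of degree $\dim_\bQ R_B(N)$ — together with the elementary structure theory of diagonalizable groups and their representations over a non-algebraically closed field. The remaining ingredients (finite generation of the character group of a multiplicative-type group of finite type, the fact that one character is defined over a finite extension, and that a continuous action of a profinite group on a finitely generated abelian group factors through a finite quotient) are standard.
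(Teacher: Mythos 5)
Your proof is correct and follows essentially the same route as the paper's: both arguments reduce a CM object to a representation of the commutative quotient of the tannakian group $G_B(M)$ and then extract $L_M$ from the splitting field of the resulting multiplicative-type group (equivalently, the finite extension through which the Galois action on its character lattice factors). The only substantive difference is that you invoke reductivity of $G$ (via Jannsen's semisimplicity theorem) to see that $G/G^{\mathrm{der}}$ is of multiplicative type, whereas the paper reduces to the abelianization by noting that $G$ maps into the commutative group $\End_K(V)^\times = K^\times$ and then explicitly allows a unipotent factor $U\simeq\bG_a^r$, so that the identical argument carries over to the mixed setting of \cref{S:MTM}.
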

\begin{proof}
	We consider the Betti realization $R_B$ (\cref{def:realization}) restricted to $\<M\>$ and denote the corresponding tannakian group by $G_B(M)$.
	It is an algebraic group over~ $\bQ$ and the analog of \cref{T:tann for <M>} says that $R_B$ induces an equivalence of tensor categories $R_B\colon \<M\>\to \Rep_\bQ(G_B(M))$.
	Therefore we can place ourselves in the representation category $\Rep_\bQ(G_B(M))$ and, to ease the notation, we will write $G$ instead of $G_B(M)$.
	
	Let $V\in\Rep_\bQ(G)$ be a CM object, i.e., $K:=\End_G(V)$ is a number field of degree equal to $\dim(V)$.
	This turns $V$ into a $K$-vector space of dimension 1 and moreover the action of $G$ on $V$ commutes with $K$.
	Since $\End_K(V)$ is abelian, we deduce that the action of $G$ on $V$ factorizes through its abelianization $G/[G,G]$.
	We can therefore assume from now on that $G$ is an abelian algebraic group.
	
	One can write $G$ as a direct product $U\times S$ where $U$ is unipotent (abelian) and $S$ is a diagonalizable group, possibly disconnected if $G$ is. (For pure motives, $G$ is reductive and hence $U=\{1\}$. We do not assume it here so it applies also in the context of mixed motives in \cref{S:MTM}.)
	
	Since $U$ is abelian it is isomorphic to $\bG_a^r$ for some $r\ge 0$.
	Let $L$ be a finite extension of $\bQ$ such that $S$ splits over it, i.e.,  $S_L\simeq \bG_m^s\times \Gamma$ for some $s\in\bN$ and some finite group $\Gamma$.
	Extend the group $G$ to $L$ and notice that all its irreducible representations on $L$-vector spaces are of dimension 1.
	
	Put $V_L:=V\otimes_\bQ L$ and note that $V_L$ is semisimple as a representation of $G_L$ (see \cite[Theorem 7.5]{CurRei-I}).
	All simple representations of $G_L$ are of dimension one over $L$, therefore $V_L$ splits as a direct sum of representations of dimension one.
	Using the equalities $\End_{G_L}(V_L)=\End_G(V)\otimes_\bQ L= K\otimes_\bQ L$ we deduce that $\End_{G_L}(V_L)$ is commutative, hence isomorphic to $L^m$ for some $m\in\bN$. 
	It follows that $\End_G(V)$ embeds in the field  $L$ and this concludes the proof.	
\end{proof}
\begin{corollary}\label{C:p does not ramify in <M>}
	For $M\in\Mot(\bQb)_\bQ$ there exists a field $L_M$ such that if $p$ does not ramify in $L_M$, then it does not ramify in $\<M\>$. In particular there are only finitely many prime numbers that ramify in $\<M\>$.
\end{corollary}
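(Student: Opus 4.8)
The plan is to read the corollary off from \cref{P:field L for all CM obj}. That proposition produces a number field $L_M$ such that every CM object $N\in\<M\>$ has its endomorphism field $\End(N)$ embeddable (as a subfield) into $L_M$; I would take this same $L_M$ as the field asserted in the statement.

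The only non-tautological ingredient is the standard fact that ramification passes to subfields: if $K\subseteq L_M$ is a subfield and the rational prime $p$ is unramified in $L_M$, then $p$ is unramified in $K$. This follows from multiplicativity of ramification indices in the tower $\bQ\subset K\subset L_M$, since for primes $\fp$ of $K$ and $\mathfrak{P}$ of $L_M$ with $\mathfrak{P}\mid\fp\mid p$ one has $e(\fp\mid p)\mid e(\mathfrak{P}\mid p)=1$. Granting this, suppose $p$ is unramified in $L_M$ but, for contradiction, that $p$ ramifies in $\<M\>$. By \cref{D:p ramifies in <M>} there is a CM motive $N\in\<M\>$ such that $p$ ramifies in $\End(N)$; but $\End(N)$ embeds into $L_M$ by \cref{P:field L for all CM obj}, so $p$ must be unramified in $\End(N)$, a contradiction. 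Hence $p$ does not ramify in $\<M\>$.

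For the final assertion I would simply recall that only finitely many rational primes ramify in the fixed number field $L_M$ (those dividing its discriminant), so the set of primes ramifying in $\<M\>$ is contained in this finite set. I do not anticipate any real obstacle: the substantive work is entirely in \cref{P:field L for all CM obj}, and what remains is the formal deduction above together with the elementary behaviour of ramification in towers.
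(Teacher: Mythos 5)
Your argument is correct and is exactly the deduction the paper intends: the corollary is stated without proof precisely because it follows from \cref{P:field L for all CM obj} via the standard facts that ramification is inherited by subfields and that only the primes dividing the discriminant of $L_M$ ramify in $L_M$.
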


\section{The $p$-adic analog of the Grothendieck period conjecture}\label{SS:pGPC} 
We now have all the ingredients to formulate the $p$-adic analog of the Grothen\-dieck period conjecture. 
In analogy to the classical case we will give the strong and the weak formulation.
On the other hand, contrary to the classical case, the strong version has itself two non equivalent versions, each of those regarding a different algebra of Andr\'e's $p$-adic periods (this difference with the classical case is discussed in \cref{tensor periods}).

Throughout the section, we work with a motive with good reduction  with rational coefficients $M\in \MotgrQ$ (see \cref{S:conventions}). 
Rational coefficients are crucial here because we want to ask $M$ to be unramified above $p$ (\cref{D:p ramifies in <M>}).

By extending the scalars from $\bQ$ to $\bQb$ (in the sense of \cref{S:conventions}\eqref{extending scalars}) we deduce a motive, still denoted by $M$, in the category $\Motgr=\Mot(\bQb)^\textrm{gr}_\bQb$.
To such a motive we can attach  two algebras  of  Andr\'e's $p$-adic periods $\cP_p(M) \subset \cP_p(\<M\>)$ (see \cref{D:matrix of periods of M}  and \cref{D:field of p-adic periods of <M>}),
the space of motivic $p$-adic periods $\cH_M$ (\cref{D:of H_M}) and 
the algebras of motivic $p$-adic periods  $\cAp(M) \subset \cO(\cH_M) $  of $M$
	 (\cref{motivic periods}).
 
\begin{conjecture}\label{Conj:p adic analog GPC}
	Suppose that  $M\in   \MotgrQ$   is unramified above $p$.
	Then we conjecture that  $\Spec(\cAp(M))$ is an irreducible variety and that the inequality
\begin{align*}
	 \trdeg_\bQb(\cP_p(M))&\le \dim \Spec(\cAp(M)) 
\end{align*}
from    \cref{C:trdegM<=dimSpecA_p}
is in fact an equality.
\end{conjecture}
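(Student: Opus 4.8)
This last statement is a conjecture, so what follows is not a proof but a description of how one might try to establish it and of where the difficulty lies. One inequality is already available for free: $\trdeg_\bQb(\cP_p(M))\le\dim\Spec(\cAp(M))$ is \cref{C:trdegM<=dimSpecA_p}, and it merely reflects that $\eval_B$ has image $\cP_p(M)$ (\cref{surj second}). So the real content is that $\Spec(\cAp(M))$ is irreducible and that the Berthelot point $\fcB$ lies on its generic point, i.e. that $\ker(\eval_B\colon\cAp(M)\to\bQb_p)$ is a minimal prime. These two assertions together amount to the single statement that $\eval_B\colon\cAp(M)\to\cP_p(M)$ is an isomorphism (see \cref{P:equiv of GPC}): injectivity of $\eval_B$ realizes $\cAp(M)$ as a subring of $\bQb_p$, hence a domain, hence $\Spec(\cAp(M))$ is irreducible, and then the image automatically attains the full dimension. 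So the plan is to prove that $\eval_B$ is injective.

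I would split this into a geometric part and a transcendence part. The geometric part --- identifying $\cH_M$ with $G_\dR(M)/G_\crys(M)$ via \cref{T:cH_M is repr}, describing the subalgebra $\cAp(M)\subset\cO(\cH_M)$ cut out by the matrix coefficients of the representation $R_\dR(M)$ of $G_\dR(M)$, and locating $\fcB$ with respect to the $G_\dR(M)$-action --- is, at least in the examples of \cref{SS:periods,SS:homogeneous}, a finite computation with reductive groups and their representations. The transcendence part is the assertion that the coordinates of $\fcB$ satisfy no polynomial relation over $\bQb$ beyond those already encoded in $\cAp(M)$; here one has to feed in genuine $p$-adic transcendence input. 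In the CM case the coordinates are products of special values of the $p$-adic Gamma function (\cref{Eg:H_M for E=CM}), in the Legendre case they are values of $p$-adic hypergeometric functions (\cref{Eg:two ell curves}), and the natural strategy is to import $p$-adic analogues of the Chudnovsky- and W\"ustholz-type algebraic-independence results used classically for periods of CM abelian varieties and for Gamma values.

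The reason no purely formal argument can suffice --- and this is the main obstacle --- is precisely the point stressed throughout the paper: $\cH_M$ is only a $G_\dR(M)$-homogeneous space and not a torsor, the fibres of $G_\dR(M)\to\cH_M$ recording the crystalline group $G_\crys(M)$ and hence, through the Tate conjecture, the Frobenius and the algebraic cycles on the special fibre. Any proof must therefore combine a characteristic-zero transcendence statement with arithmetic information modulo $p$, and the conjecture is at least as hard as the classical Grothendieck period conjecture --- itself open outside the reach of transcendence theory --- together with its $p$-adic counterpart, for which the available machinery is considerably thinner. A realistic first target is the case where $M$ is generated by a single CM elliptic curve with supersingular reduction at an unramified prime: there everything reduces to the algebraic independence over $\bQb$ --- beyond the obvious rational relations --- of the explicit $\Gamma_p$-product described in \cref{Eg:H_M for E=CM}, and even a proof conditional on a clean $p$-adic algebraic-independence statement for Gamma values would already be a meaningful test of the formalism.
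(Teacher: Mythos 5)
You correctly recognize that this statement is a conjecture with no proof in the paper, and your reformulation of it as the injectivity of $\eval_{B}\colon\cAp(M)\to\bQb_p$ is exactly the paper's own equivalent formulation in \cref{P:equiv of GPC}. Your account of the supporting results (\cref{C:trdegM<=dimSpecA_p}, \cref{surj second}, \cref{T:cH_M is repr}) and of where the genuine transcendence difficulty lies is consistent with the paper.
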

\begin{remark}
	We do not know if the ramification condition is optimal but it makes sense to impose it in order to avoid the situation from \cref{Eg:Andre CM ell curve}.
\end{remark}

\begin{conjecture}\label{Conj:p adic analog GPC tensor}
	Suppose that  $M\in   \MotgrQ$   is unramified above $p$.
	Then we conjecture that $\cH_M$ is an irreducible variety and that the inequality
\begin{align*}
	 \trdeg_\bQb(\cP_p(\<M\>))&\le \dim(\cH_M)
\end{align*}
from    \cref{T:trdeg <= dim H_M}  
is in fact an equality.
\end{conjecture}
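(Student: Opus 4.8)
The plan is to turn the two clauses of the conjecture into a single statement about the Berthelot point. By \cref{T:cH_M is repr precise} a factorisation of $R_\crys$ through $\bQb$ identifies $\cH_M$ with the quotient $G_\dR(M)/G_\crys(M)$ of the reductive groups of \cref{T:tann for <M>}, so $\cH_M$ is a smooth (in particular reduced) variety and $\cO(\cH_M)=\cAp(\<M\>)$; by \cref{r:surj} the map $\eval_B\colon \cAp(\<M\>)\to\cP_p(\<M\>)\subseteq\obQ_p$ is surjective and $\Frac(\cP_p(\<M\>))$ is the residue field of $\fcB\in\cH_M(\obQ_p)$. Since $\cAp(\<M\>)$ is a finitely generated $\bQb$-algebra and $\cP_p(\<M\>)$ is a domain, the conjecture's two assertions hold together if and only if $\ker(\eval_B)=0$, i.e.\ if and only if $\eval_B$ is an isomorphism, equivalently if and only if $\fcB$ is a generic point of $\cH_M$ (cf.\ \cref{P:equiv of GPC}). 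So I would split the task into \textbf{(a)} irreducibility of $\cH_M$, and \textbf{(b)} genericity of the Berthelot point on it.

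\textbf{Step (a).} Using $\cH_M\simeq G_\dR(M)/G_\crys(M)$, irreducibility is equivalent to $G_\crys(M)$ meeting every connected component of $G_\dR(M)$, i.e.\ to $G_\crys(M)\to\pi_0(G_\dR(M))$ being surjective. This is automatic whenever $G_\dR(M)$ is connected --- as it is in \cref{Eg:H_M for E=ordinary,Eg:two ell curves}, where it is $\PGL(2)$, resp.\ $\GL(2)$, and conjecturally whenever $M$ is generated by abelian varieties. In general I would attack it by using that $G_\crys(M)$ contains a Frobenius torus together with powers of a Frobenius element, and that $\pi_0(G_\dR(M))$ is controlled by the CM (abelian) subquotients of $\<M\>$ and the splitting fields of their endomorphism algebras (cf.\ \cref{P:field L for all CM obj}); the \emph{unramifiedness hypothesis} is exactly the condition guaranteeing that, for a CM subobject $N$ with $\End(N)=F$, the decomposition type of $p$ in $F$ sends the relevant Frobenius into the correct component --- its failure being precisely the pathology of \cref{Eg:Andre CM ell curve}.

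\textbf{Step (b), the main obstacle.} Granting (a), it remains to show that the residue field of $\fcB$ has transcendence degree $\dim G_\dR(M)-\dim G_\crys(M)$, i.e.\ a lower bound on $\trdeg_\bQb\cP_p(\<M\>)$ matching the upper bound of \cref{T:trdeg <= dim H_M}. This is the $p$-adic avatar of Grothendieck's period conjecture, and in contrast to every other step it is not formal: it is a genuine $p$-adic transcendence assertion about the Berthelot comparison, for which no general method is available --- this is exactly why the statement is phrased as a conjecture. The realistic plan is to verify it case by case, against external transcendence input: for Tate motives $\fcB$ records Frobenius eigenvalues and one reduces to the $p$-adic situation of \cite{Furusho,Brownp}; for CM motives $\cP_p(\<M\>)$ is generated by products of $p$-adic $\Gamma$-values (\cref{Eg:H_M for E=CM}) and one must rule out algebraic relations beyond those forced by $\End(N)$ and the functional equation of $\Gamma_p$, the ramified case being excluded precisely because there extra (Gross--Koblitz) relations do occur; and for the Legendre family of \cref{Eg:two ell curves} the single predicted relation is the Wronskian identity, so one would need $p$-adic algebraic independence of the remaining hypergeometric values. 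Thus the whole difficulty of the conjecture is concentrated in Step (b).
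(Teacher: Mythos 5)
The statement you were asked to prove is a \emph{conjecture}: it is the paper's $p$-adic analog of the Grothendieck period conjecture, and the paper offers no proof of it (nor could it — the authors explicitly present it as open, verify it in essentially one case via Mahler's theorem in \cref{ex log}, and otherwise only record equivalent reformulations). Your proposal correctly recognizes this. Your ``Reformulation'' paragraph reproduces, accurately, the content of the paper's \cref{P:equiv of GPC}: the conjunction of irreducibility of $\cH_M$ and equality of transcendence degrees is equivalent to injectivity of $\eval_B\colon\cO(\cH_M)\to\obQ_p$, equivalently to the Berthelot point $\fcB$ lying over the generic point. So as a translation of the conjecture into the tannakian language the paper itself uses, your proposal is consistent with the text.

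As a proof, however, it of course has a gap — the one you yourself name. Step (b) is the entire content of the conjecture and you (rightly) do not claim to prove it; the case-by-case ``plan'' invokes external transcendence statements ($p$-adic algebraic independence of $\Gamma_p$-values, of hypergeometric values, etc.) that are themselves open. Step (a) is also not established: irreducibility of $\cH_M\simeq G_\dR(M)/G_\crys(M)$ does reduce to surjectivity of $G_\crys(M)\to\pi_0(G_\dR(M))$, but your argument for this in general is a heuristic about Frobenius tori and CM subquotients, not a proof, and the paper nowhere claims that the unramifiedness hypothesis suffices for it (the authors say only that they ``do not know if the ramification condition is optimal''). In short: your submission is a faithful and well-organized restatement of what would need to be shown, matching the paper's own equivalences, but it proves nothing beyond those equivalences — which is the correct and honest outcome for a statement that is a conjecture.
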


Let us give some equivalent formulations of \cref{Conj:p adic analog GPC} and \cref{Conj:p adic analog GPC tensor} and study their relation.

Recall from \cref{BO point} that  we have the  Berthelot point
$\fcB\in \cH_M(\bQb_p)$ and  the evaluation map
$ \eval_{B}\colon \cO(\cH_M)\to \bQb_p $
 which, by \cref{surj second}, verifies 
 \[\eval_{B}(\cAp(M)) = \cP_p(M).\]
\begin{proposition}\label{P:equiv of GPC}
	Let $M\in\Mot(\bQb)_\bQb$. 
	\begin{enumerate}
			\item The following are equivalent:
		\begin{enumerate}
		\item \cref{Conj:p adic analog GPC tensor} holds for $M$,  
		\item \label{Conj:injective} $\eval_{B}\colon \cO(\cH_M)\to \bQb_p$ is injective,
		\item $\cH_M$ is irreducible and the  point
 $\fcB\in \cH_M(\bQb_p)$ lives above the generic point.
		\end{enumerate}
	\item The following are equivalent:
	\begin{enumerate}
		\item \cref{Conj:p adic analog GPC} holds for $M$,
		\item $\eval_{B}\colon \cAp(M)\to \bQb_p$ is injective.
	\end{enumerate}
 	\end{enumerate}
\end{proposition}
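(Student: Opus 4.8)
The plan is to recognize both parts as a single elementary fact about finitely generated algebras over a field, applied to $\cO(\cH_M)$ in part~(1) and to $\cAp(M)$ in part~(2). Two preliminary remarks are needed. First, all the algebras in sight are finitely generated over $\bQb$: $\cO(\cH_M)$ because $\cH_M$ is a variety of finite type over $\bQb$, $\cAp(M)$ by \cref{surj second}, and the images $\cP_p(\<M\>)=\eval_B(\cO(\cH_M))$ (by \cref{r:surj}) and $\cP_p(M)=\eval_B(\cAp(M))$ (by \cref{surj second}) as quotients of these. Second, $\cO(\cH_M)$ is reduced: by \cref{T:cH_M is repr precise} the variety $\cH_M$ is the homogeneous space $G_\dR(M)/G_\crys(M)$ under the group $G_\dR(M)$, which is smooth since we work in characteristic zero, so $\cH_M$ is smooth over $\bQb$; consequently $\cAp(M)\subseteq\cO(\cH_M)$ is reduced as well. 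This reducedness is the only ingredient of the argument which is not purely formal.

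With these in hand I would prove the following lemma: let $A$ be a reduced, finitely generated $\bQb$-algebra and $\phi\colon A\to\bQb_p$ a homomorphism of $\bQb$-algebras, with image $B=\phi(A)$ and kernel $\mathfrak q=\ker\phi$; then $B\cong A/\mathfrak q$ is a finitely generated $\bQb$-domain with $\trdeg_\bQb B=\dim (A/\mathfrak q)$ equal to the dimension of the Zariski closure in $\Spec A$ of the point defined by $\mathfrak q$, and the three conditions "$\phi$ is injective'', "$\Spec A$ is irreducible and the point $\mathfrak q$ is its generic point'', and "$\Spec A$ is irreducible and $\trdeg_\bQb B=\dim\Spec A$'' are equivalent. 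The transcendence-degree formula and the equivalence are standard dimension theory of finitely generated algebras over a field: for such a domain the Krull dimension equals the transcendence degree of its fraction field; an injection into a field forces $A$ to be a domain; a reduced ring with irreducible spectrum is a domain, whose minimal prime is $(0)$; and, $\Spec A$ being irreducible, the equality $\dim (A/\mathfrak q)=\dim\Spec A$ forces $\mathfrak q=(0)$ via the dimension formula $\dim A=\operatorname{ht}\mathfrak q+\dim (A/\mathfrak q)$.

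Finally I would read off the statement. Applying the lemma to $A=\cO(\cH_M)$, $B=\cP_p(\<M\>)$, $\phi=\eval_B$: using \cref{T:trdeg <= dim H_M} and the formulation of \cref{Conj:p adic analog GPC tensor}, the third condition of the lemma is exactly condition (a) of part~(1), its first condition is (b), and its second condition is (c), which gives the three-fold equivalence of part~(1). Applying the lemma to $A=\cAp(M)$, $B=\cP_p(M)$, $\phi=\eval_B$: using \cref{C:trdegM<=dimSpecA_p} and the formulation of \cref{Conj:p adic analog GPC}, its third condition is condition (a) of part~(2) and its first condition is (b), which gives part~(2). I do not anticipate a real obstacle; the only point requiring attention is the passage between the geometric language of the conjectures ("irreducible'', "generic point'') and the algebraic condition that $\eval_B$ be injective, and that is exactly where the reducedness of $\cH_M$ (equivalently, smoothness of the quotient $G_\dR(M)/G_\crys(M)$ in characteristic zero) is used.
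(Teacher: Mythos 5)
Your proof is correct and follows essentially the same route as the paper's: identify $\cP_p(\<M\>)$ (resp.\ $\cP_p(M)$) with the quotient of $\cO(\cH_M)$ (resp.\ $\cAp(M)$) by $\ker(\eval_B)$, note this quotient is a domain inside $\bQb_p$, and use the equality of Krull dimension and transcendence degree together with the height formula for finitely generated domains. The only differences are presentational: you package the argument as an abstract lemma, and you make explicit the reducedness of $\cO(\cH_M)$ (via smoothness of $G_\dR(M)/G_\crys(M)$) and the equivalence with (c), points the paper leaves implicit or calls standard.
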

\begin{proof}
	We only prove the equivalence between $(a)$ and $(b)$ in the first part. 
	The proof of the second is analogous. 
	The equivalence between $(a)$ and $(c)$ in the first part is standard.
	
	Put $I:=\ker(\eval_{B}\colon \cO(\cH_M)\to \bQb_p)$  
	and consider the isomorphism of algebras
	 \begin{align*}\label{Eq:O/I=P}
	 	\cO(\cH_M)/I \simeq \cP_p(\<M\>).
	 \end{align*}
	 As $\cP_p(\<M\>)$ is an integral $\bQb$-algebra,
 	  $I$ must be a prime ideal.
 	
	If $I=(0)$, then $\cO(\cH_M)$ is integral.
	Since $\dim(\Spec(A)) = \trdeg_\bQb(A)$ for any integral $\bQb$-algebra, we deduce
	the equality $\trdeg_\bQb(\cP_p(\<M\>))=\dim\cH_M$. 
	
	Conversely, since $I$ is a prime ideal, the assumption (a) implies that $I$ must be of height $0$ in an integral algebra, so equal to $(0)$.
\end{proof}

\begin{corollary}
\cref{Conj:p adic analog GPC tensor} holds for $M$ if and only if \cref{Conj:p adic analog GPC} holds for all $N \in \< M \>.$
\end{corollary}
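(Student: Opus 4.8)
The plan is to reduce the statement to the injectivity criterion of \cref{P:equiv of GPC} together with one structural fact about $\cO(\cH_M)$.

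Recall from \cref{P:surjective HM to HN} that for every $N\in\<M\>$ the restriction map gives an injection $\cO(\cH_N)\hookrightarrow\cO(\cH_M)$, under which $\cAp(N)$ becomes a subalgebra of $\cO(\cH_M)$ and the motivic period $[N,v,\xi]^{\mathrm{mot}}$ computed in $\cH_N$ is sent to the one computed in $\cH_M$. The key compatibility I would spell out is that the equivariant map $\cH_M\to\cH_N$ sends the Berthelot point $\fcB\in\cH_M(\bQb_p)$ to the Berthelot point of $\cH_N$, since both are induced by the single natural transformation $\cl_p\circ\spe$; hence $\eval_{B}\colon\cAp(N)\to\bQb_p$ is just the restriction of $\eval_{B}\colon\cO(\cH_M)\to\bQb_p$. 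Granting this, the implication ``\cref{Conj:p adic analog GPC tensor} for $M$ $\Rightarrow$ \cref{Conj:p adic analog GPC} for all $N\in\<M\>$'' is immediate from \cref{P:equiv of GPC}: if $\eval_{B}$ is injective on $\cO(\cH_M)$ it is injective on each subalgebra $\cAp(N)$, and by \cref{P:equiv of GPC} an injective $\eval_{B}$ on $\cAp(N)$ is exactly \cref{Conj:p adic analog GPC} for $N$ (including the irreducibility of $\Spec(\cAp(N))$, which is automatic since a subalgebra of a domain is a domain).

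For the converse I would first establish that $\cO(\cH_M)$ is the directed union $\bigcup_{N\in\<M\>}\cAp(N)$. Directedness is formal: for $N,N'\in\<M\>$ one has $N\oplus N'\in\<M\>$ and $\cAp(N),\cAp(N')\subseteq\cAp(N\oplus N')$, because a vector or covector supported on one summand of $N\oplus N'$ contributes the corresponding period of that summand and pairs trivially with the other. That the union is everything follows from the Peter--Weyl isomorphism \eqref{eq:Peter-Weyl iso} and its restriction to $\cO(G/H)$: under the identification $\cH_M\simeq G_\dR(M)/G_\crys(M)$ of \cref{T:cH_M is repr precise}, the restricted decomposition writes $\cO(\cH_M)$ as the direct sum of the isotypic pieces $V^{G_\crys(M)}\boxtimes V^\vee$ over the irreducible representations $V$ of $G_\dR(M)$, and each such piece is the image of the matrix-coefficient map of the simple motive $N$ with $R_\dR(N)\simeq V$ (via \cref{T:tann for <M>}), hence lies inside $\cAp(N)$. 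Thus every $f\in\cO(\cH_M)$ is a finite linear combination of motivic periods $[N_i,v_i,\xi_i]^{\mathrm{mot}}$ and so lies in $\cAp\bigl(\bigoplus_i N_i\bigr)$. Now assume \cref{Conj:p adic analog GPC} holds for every $N\in\<M\>$, i.e. (by \cref{P:equiv of GPC}) that $\eval_{B}$ is injective on each $\cAp(N)$; if $f\in\ker\bigl(\eval_{B}\colon\cO(\cH_M)\to\bQb_p\bigr)$ then $f\in\cAp(N)$ for some $N$ by the above, whence $f=0$. So $\eval_{B}$ is injective on $\cO(\cH_M)$ and \cref{Conj:p adic analog GPC tensor} holds for $M$ by \cref{P:equiv of GPC}.

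The only point that is not purely formal is the exhaustion $\cO(\cH_M)=\bigcup_N\cAp(N)$, which is where reductivity of $G_\crys(M)$ (making $\cH_M$ a genuine affine homogeneous space, to which Peter--Weyl applies, rather than merely its affine closure) enters; everything else is bookkeeping with the functors $Z_p$, $\cl_p$, $R_\dR$ and the Berthelot point. I would also note in passing that the hypotheses are compatible: if $p$ is unramified in $\<M\>$ then it is unramified in $\<N\>$ for every $N\in\<M\>$, since a CM object of $\<N\>$ is a CM object of $\<M\>$.
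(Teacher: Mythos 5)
Your proof is correct and follows essentially the same route as the paper's: the forward direction is restriction of the injectivity criterion of \cref{P:equiv of GPC} to the subalgebras $\cAp(N)$, and the converse uses the Peter--Weyl decomposition of $\cO(G_\dR(M)/G_\crys(M))$ into matrix-coefficient pieces $V^{G_\crys(M)}\boxtimes V^\vee$ to see that the $\cAp(N)$ exhaust $\cO(\cH_M)$. You are in fact slightly more careful than the paper, which says only that the $\cAp(N)$ \emph{generate} $\cO(\cH_M)$; your observation that the family $\{\cAp(N)\}$ is directed (via $\cAp(N),\cAp(N')\subseteq\cAp(N\oplus N')$), so that the union is already an algebra, is exactly what is needed to pass from injectivity on each piece to injectivity on the whole.
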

\begin{proof}
We use the description \eqref{Conj:injective} of the conjectures coming from \cref{P:equiv of GPC}.
	It is clear that the injectivity of 
	\[ \eval_{B}\colon \cO(\cH_M)\to\bQb_p \]
	 implies the same for its restriction to $\cAp(M)$.
	 
	Conversely, recall that $\cH_M\simeq G/H$ where $G=G_\dR(M)$ and $H=G_\crys(M)$ and that $G$ is a reductive group.
	The algebra $\cO(G)$ of regular functions on $G$, viewed as a $G\times G$-representation, decomposes as
	\[ \cO(G) = \bigoplus_{V\text{ irr}} V\boxtimes V^\vee\]
	which implies the decomposition of $\cO(\cH_M)$ as a $G$-representation:
	\[ \cO(\cH_M)\simeq \bigoplus_{V\text{ irr}} V^H\boxtimes V^\vee.\]
	The map $V^H\boxtimes V^\vee\to \cH(\cH_M)$ is precisely the matrix coefficients map that we considered in \cref{motivic periods}.

	In our setting this implies that $\cO(\cH_M)$ is generated by $\cAp(N)$ for $N\in\<M\>$ (actually finitely many $N$ suffice since $\cO(\cH_M)$ is a finitely generated algebra). 
	We can deduce that the injectivity of $\eval_{B}$ when restricted to $\cAp(N)$ for sufficiently many $N\in\<M\>$ implies the injectivity of $\eval_{B}$ on $\cO(\cH_M)$.
\end{proof}

We now turn to the weak version of the $p$-adic  Grothendieck period conjecture.
 Recall the inclusion  $Z_p(\spe(M)) \subset R_\crys(\spe(M))$  induced by  $\cl_p(\spe(M))$ (\cref{def:realization}) and the inclusion  $R_\dR(M) \subset R_\crys(\spe(M))$ induced by the Berthelot comparison isomorphism \eqref{Eq:the Berthelot compar}.

\begin{conjecture}\label{Conj:p adic analog GPC weak}
Suppose that  $M\in   \MotgrQ$   is unramified above $p$. 
	Then we conjecture that the inclusion of $\bQb$-vector spaces
	\[  Z_0(M)_\bQb \subset    R_\dR(M) \cap Z_p(\spe(M))_\bQb  \]
	 is in fact an equality.
\end{conjecture}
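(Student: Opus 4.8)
Since \cref{Conj:p adic analog GPC weak} is a conjecture, the natural thing to aim for here is that it \emph{follows} from the strong form \cref{Conj:p adic analog GPC} for $M$ — the $p$-adic counterpart of the classical implication ``strong (GPC) $\Rightarrow$ (GPCw)'' recalled in \cref{rem:bcconj}. The plan is as follows. First I would dispose of the unconditional inclusion: if $f\colon\one\to M$ is a morphism in $\Mot(\bQb)$, then $\cl_0(f)\in R_\dR(M)$ by construction, and by compatibility of Berthelot's isomorphism with cycle classes (\cref{S:conventions}\eqref{crystalline}) this class is identified with $\cl_p(\spe(f))\in Z_p(\spe(M))_\bQb$; hence $Z_0(M)_\bQb\subseteq R_\dR(M)\cap Z_p(\spe(M))_\bQb$ always. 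The content is the reverse inclusion, which I would extract from \cref{Conj:p adic analog GPC}.

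So take $\gamma\in R_\dR(M)\cap Z_p(\spe(M))_\bQb$ and write $\gamma=\cl_p(v)$ with $v\in Z_p(\spe(M))$. The first key step is the observation that, because $\gamma$ already lies in the de Rham $\bQb$-structure, for every covector $\xi\in R_\dR(M)^\vee$ the period $[M,v,\xi]=\xi(\gamma)$ of \cref{period triples} is a pairing of two $\bQb$-rational elements, hence lies in $\bQb$. Now $\eval_B\bigl([M,v,\xi]^{\mathrm{mot}}\bigr)=[M,v,\xi]$ by \cref{surj second}, so $[M,v,\xi]^{\mathrm{mot}}-[M,v,\xi]$ is an element of $\cAp(M)$ killed by $\eval_B$. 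By \cref{P:equiv of GPC}(2) the hypothesis \cref{Conj:p adic analog GPC} is exactly the injectivity of $\eval_B\colon\cAp(M)\to\bQb_p$, so one concludes that $[M,v,\xi]^{\mathrm{mot}}$ is the constant function with value $\xi(\gamma)$ on $\cH_M$, for every $\xi$.

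The second step unwinds this constancy. By \cref{motivic periods}, for a $\bQb$-algebra $A$ and a point $c\in\cH_M(A)$ one has $[M,v,\xi]^{\mathrm{mot}}(c)=\xi\bigl(c_M(v)\bigr)$, where $c_M\colon Z_p(\spe(M))\otimes A\to R_\dR(M)\otimes A$ is the component of $c$ at $M$. Since the covectors in $R_\dR(M)^\vee$ separate points of $R_\dR(M)\otimes A$, the constancy of all the $[M,v,\xi]^{\mathrm{mot}}$ forces $c_M(v)=\gamma\otimes 1$ for \emph{every} $A$-point $c$ of $\cH_M$. I would then fix a base point $c_0\in\cH_M(\bQb)$ (it exists since $\cH_M\simeq G_\dR(M)/G_\crys(M)$ by \cref{T:cH_M is repr precise}), so that $(c_0)_M(v)=\gamma$, and apply the previous identity to the points $g\cdot c_0$ for $g\in G_\dR(M)(A)$; the $G_\dR(M)$-equivariance of $\cH_M$ then gives
\[ g\cdot\gamma=(g\cdot c_0)_M(v)=\gamma\otimes 1\quad\text{in }R_\dR(M)\otimes A, \]
for every $A$ and every $g$, where $G_\dR(M)$ acts on $R_\dR(M)$ through the equivalence $\<M\>\simeq\Rep_\bQb(G_\dR(M))$ of \cref{T:tann for <M>}\eqref{item:tanna G_dR}. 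Evaluating at the universal point of $G_\dR(M)$ shows that $\gamma$ is a $G_\dR(M)$-invariant vector, i.e. $\gamma\in R_\dR(M)^{G_\dR(M)}=\Hom_{\Mot(\bQb)}(\one,M)=Z_0(M)_\bQb$, which finishes the reverse inclusion.

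The main obstacle is that \cref{Conj:p adic analog GPC weak} is itself unconditionally out of reach: unwinding the definitions it is a $\bQb$-coefficient, ramification-corrected strengthening of \cref{Conj:liftp}/\cref{Conj:correspondancep} — the problem of lifting algebraic classes from characteristic $p$ — which sits between the $p$-adic variational Hodge conjecture and the Tate conjecture. In the conditional argument sketched above the only genuinely delicate points are bookkeeping: one must keep separate the two $\bQb$-structures on $R_\crys(\spe(M))$ (the canonical Berthelot one, which is used to define the periods and the intersection appearing in the conjecture, versus the non-canonical one of \cref{T:tann for <M>}\eqref{item:tann for <M>:crys factorizes} used to present $\<\spe(M)\>$ as a representation category), and one must justify the passage from ``$g\cdot\gamma=\gamma$ for every $g\in G_\dR(M)(A)$'' to ``$\gamma$ is invariant'' — which is immediate by evaluation at the universal point, or equivalently by fppf descent along $G_\dR(M)\to\cH_M$, a map that need not admit a section.
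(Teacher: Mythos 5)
The statement you are asked about is stated in the paper only as a conjecture, so there is no unconditional proof to compare against; the paper's closest result is \cref{strong implies weak}, which derives \cref{Conj:p adic analog GPC weak} (for all $N\in\<M\>$) from the strong tensor form \cref{Conj:p adic analog GPC tensor}, and your proposal correctly identifies that such a conditional derivation is the only reasonable target. Your argument is sound and is essentially the paper's proof of that proposition in different clothing: the regular function $f(g)=\lambda(gv-\alpha v)$ that the paper constructs to contradict genericity of the Berthelot point is exactly your element $[M,v,\xi]^{\mathrm{mot}}-\xi(\gamma)$ in the kernel of $\eval_{B}$, and your direct ``constancy of matrix coefficients forces $G_\dR(M)$-invariance'' is the contrapositive of the paper's argument. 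The one genuine (minor) sharpening is that you only invoke injectivity of $\eval_{B}$ on the subalgebra $\cAp(M)$, i.e.\ \cref{Conj:p adic analog GPC} for $M$ itself, rather than the a priori stronger \cref{Conj:p adic analog GPC tensor}; both your separation-of-covectors step and the base-point/equivariance step (existence of $c_0\in\cH_M(\bQb)$ via $\cH_M\simeq G_\dR(M)/G_\crys(M)$, then evaluation at the universal point of $G_\dR(M)$) are correct.
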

In    \cref{Conj:liftp} we were interested in the above equality with coefficients in $\bQ$. Here we give  a simple argument which allows one to go from $\bQb$ to $\bQ$.
\begin{proposition}\label{Q vs Qbar}
Fix a motive $M\in \MotgrQ$ that is unramified over $p$ and that satisfies \cref{Conj:p adic analog GPC weak}.

Then the inclusion of $\bQ$-vector spaces
	\[  Z_0(M)_\bQ \subset    R_\dR(M) \cap Z_p(\spe(M))_\bQ  \]
is an equality.
 \end{proposition}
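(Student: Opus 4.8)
The plan is to obtain the $\bQ$-statement from the $\bQb$-statement of \cref{Conj:p adic analog GPC weak} by a descent argument along the extension $\bQ\subset\bQb$.

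First I would set up the common ambient space $W:=R_\crys(\spe(M))$ and recall that, via $\cl_p(\spe(M))$ and Berthelot's comparison isomorphism \eqref{Eq:the Berthelot compar}, all the relevant spaces live inside $W$: the $\bQb$-form $R_\dR(M)\subset W$ (which does not depend on whether we regard $M$ as having $\bQ$- or $\bQb$-coefficients), the $\bQ$-subspace $Z_p(\spe(M))_\bQ$ of algebraic classes (embedded by $\cl_p$, which is injective since $\sim_\ho=\sim_\num$ holds on $M$; see \cref{rem:rectangular}), and $Z_0(M)_\bQ$ (embedded by $\cl_p\circ\spe$, which is injective because the specialization functor is faithful, see \cref{S:conventions}\eqref{specialization}). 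The inclusion $\subseteq$ of the statement is then the standard fact that algebraic cycles in characteristic zero specialize to algebraic cycles, compatibly with \eqref{Eq:the Berthelot compar}.

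The mechanism of the descent is that extension of coefficients from $\bQ$ to $\bQb$ (\cref{S:conventions}\eqref{extending scalars}) identifies, compatibly with $\spe$ and $\cl_p$, the space $Z_0(M)_\bQb$ with $Z_0(M)_\bQ\otimes_\bQ\bQb$ and $Z_p(\spe(M))_\bQb$ with $Z_p(\spe(M))_\bQ\otimes_\bQ\bQb$, while leaving $R_\dR(M)$ unchanged; inside $W$ this means that $Z_0(M)_\bQb$ (resp.\ $Z_p(\spe(M))_\bQb$) is the $\bQb$-span of $Z_0(M)_\bQ$ (resp.\ of $Z_p(\spe(M))_\bQ$). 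I would then fix a $\bQ$-linear complement $Z_p(\spe(M))_\bQ=Z_0(M)_\bQ\oplus C$; since by \cref{rem:rectangular} a $\bQ$-basis of $Z_p(\spe(M))_\bQ$ remains $\bQb$-linearly independent in $W$, taking $\bQb$-spans yields a direct sum decomposition inside $W$:
\[ Z_p(\spe(M))_\bQb=\bigl(Z_0(M)_\bQ\otimes_\bQ\bQb\bigr)\oplus\bigl(C\otimes_\bQ\bQb\bigr). \]

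To finish, given $x\in R_\dR(M)\cap Z_p(\spe(M))_\bQ$, I would note that on the one hand $x\in R_\dR(M)\cap Z_p(\spe(M))_\bQb$, which by \cref{Conj:p adic analog GPC weak} equals $Z_0(M)_\bQ\otimes_\bQ\bQb$; on the other hand, writing $x=u+c$ with $u\in Z_0(M)_\bQ$ and $c\in C$, this is also the decomposition of $x$ in the direct sum above, so $x$ lying in the first summand forces $c=0$, i.e.\ $x=u\in Z_0(M)_\bQ$, as desired. The only genuinely delicate point is the compatibility, invoked in the previous paragraph, between the scalar-extension identifications and the maps $\spe$ and $\cl_p$ into $W$; this is however purely formal, given the two properties of extension of scalars recalled in \cref{S:conventions}\eqref{extending scalars}.
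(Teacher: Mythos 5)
Your argument is correct and rests on exactly the same ingredients as the paper's proof: that extension of coefficients commutes with taking algebraic classes (numerical equivalence), the injectivity of the class and comparison maps into $R_\crys(\spe(M))$, and the $\bQb$-statement of \cref{Conj:p adic analog GPC weak}. The paper packages the final linear-algebra step as a commutative square plus faithfully flat descent of an equality of subspaces, whereas you choose an explicit $\bQ$-complement and use uniqueness of the direct-sum decomposition; these are interchangeable, so the proof is essentially the same.
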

 \begin{proof}
 Consider the commutative diagram 
 \[\begin{matrix}
Z_0(M)_\bQ \otimes_\bQ \bQb &  \subset & (R_\dR(M) \cap Z_p(\spe(M))_\bQ) \otimes_\bQ \bQb  \\
\downarrow &  & \downarrow \\
Z_0(M)_\bQb &  = & R_\dR(M) \cap Z_p(\spe(M))_\bQb
\end{matrix}\]
 The first vertical arrow is an isomorphism because   extending coefficients  commutes with numerical equivalence \cite[Proposition 3.2.7.1]{Andmot}  or alternatively it commutes with homological equivalence for classical cohomologies in characteristic zero (one reduces to singular cohomology through comparison theorems).
 
 The second vertical arrow is injective because it is even injective when one extends to $\bQb_p$-coefficients.
 This comes from the Berthelot comparison isomorphism \eqref{Eq:the Berthelot compar} and again the fact that extending coefficients   commutes with numerical equivalence.

 Hence, by passing through the first vertical arrow we deduce that the diagonal composition is an isomorphism, whereas by passing through the second vertical map we have that the diagonal is a composition of injective maps. We deduce that all arrows are isomorphisms and therefore the inclusion is an equality.
 \end{proof}
\begin{remark}
 \cref{Conj:p adic analog GPC weak}  with \cref{Q vs Qbar} corrects \cref{Conj:liftp}. 
 Notice that when we discussed  \cref{Conj:liftp}  the ramification condition was not yet introduced.
	The relation of this conjecture with other classical conjectures is sketched in \cref{rem:bcconj}.
\end{remark}

\begin{proposition}\label{strong implies weak}
The validity of \cref{Conj:p adic analog GPC tensor} for $M$ implies the validity of 
\cref{Conj:p adic analog GPC weak} for all $N \in \<M \>.$ 
\end{proposition}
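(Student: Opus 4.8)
The plan is to deduce \cref{Conj:p adic analog GPC weak} from the genericity of the Berthelot point, which is exactly what \cref{Conj:p adic analog GPC tensor} provides: by \cref{P:equiv of GPC} the hypothesis is equivalent to the injectivity of $\eval_B\colon\cO(\cH_M)\to\obQ_p$. Fix $N\in\<M\>$; since $\<N\>\subseteq\<M\>$, every CM object of $\<N\>$ lies in $\<M\>$, so $N$ is again unramified above $p$ and \cref{Conj:p adic analog GPC weak} makes sense for it. The inclusion $Z_0(N)_\bQb\subseteq R_\dR(N)\cap Z_p(\spe(N))_\bQb$ is unconditional: for $f\in Z_0(N)_\bQb=\Hom_{\<N\>}(\one,N)$ the class $\cl_0(f)$ lies in $R_\dR(N)$, and by naturality of the Berthelot isomorphism \eqref{Eq:the Berthelot compar} the crystalline cycle class $\cl_p(\spe(f))$ is identified with $\cl_0(f)$, while $\spe(f)\in Z_p(\spe(N))_\bQb$. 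So I would only have to prove the reverse inclusion.

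For the reverse inclusion, let $v\in R_\dR(N)\cap Z_p(\spe(N))_\bQb$. Viewing $v$ in $R_\crys(\spe(N))$ via $\cl_p$ and transporting through the Berthelot isomorphism, the assumption $v\in R_\dR(N)$ means that $v$ corresponds to $w\otimes 1$ for a unique (since $\cl_p$ is injective) element $w\in R_\dR(N)$. The key step is: for every covector $\xi\in R_\dR(N)^\vee$, the motivic $p$-adic period $[N,v,\xi]^{\mathrm{mot}}\in\cO(\cH_M)$ of \cref{motivic periods} satisfies, by \cref{surj second},
\[\eval_B\big([N,v,\xi]^{\mathrm{mot}}\big)=[N,v,\xi]=\xi(w),\]
and this number lies in $\obQ$ precisely because $w$ is $\obQ$-rational. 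As $\eval_B$ is injective and fixes the constants $\obQ\subseteq\cO(\cH_M)$, it follows that $[N,v,\xi]^{\mathrm{mot}}$ equals the constant function $\xi(w)$ for every $\xi$. Unwinding the definition of these functions, this says exactly that $c(N)(v)=w\otimes 1$ for every $\obQ$-algebra $A$ and every $c\in\cH_M(A)$.

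Finally I would invoke equivariance. The inclusion $\<N\>\subseteq\<M\>$ yields a faithfully flat projection $G_\dR(M)\twoheadrightarrow G_\dR(N)$ (as in the proof of \cref{P:surjective HM to HN}), and through it $G_\dR(M)$ acts on $R_\dR(N)$; the morphism $\cH_M\to\bA(R_\dR(N))$, $c\mapsto c(N)(v)$, is equivariant for this action and for the $G_\dR(M)$-action on $\cH_M$ from \cref{T:cH_M is repr}. A constant equivariant morphism out of the (nonempty) space $\cH_M$ forces its value $w$ to be $G_\dR(M)$-fixed, hence $w\in R_\dR(N)^{G_\dR(N)}$. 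By the tannakian equivalence $R_\dR\colon\<N\>\simeq\Rep_\bQb(G_\dR(N))$ this fixed vector is $\cl_0(f)$ for a genuine motivic class $f\in Z_0(N)_\bQb=\Hom_{\<N\>}(\one,N)$; the cycle-class compatibility again identifies $\cl_p(\spe(f))$ with $w=v$ in $R_\crys(\spe(N))$, and the injectivity of $\cl_p$ gives $\spe(f)=v$. Thus $v$ is in the image of $Z_0(N)_\bQb$, proving $R_\dR(N)\cap Z_p(\spe(N))_\bQb\subseteq Z_0(N)_\bQb$.

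I do not expect a serious technical obstacle: the two outer steps are formal bookkeeping with the constructions of \cref{SS:homogeneous}. The one point to keep straight is that $Z_p(\spe(N))_\bQb$ embeds in $R_\crys(\spe(N))$ only as a $\obQ$-subspace, not a $\obQ_p$-subspace, and it is precisely this arithmetic rigidity that makes the genericity of $\fcB$ strong enough to force a class rational for \emph{both} the de Rham and the crystalline structures to be of motivic origin. The genuinely conceptual step is the middle one; it is the exact $p$-adic analogue of the classical implication that the Grothendieck period conjecture on transcendence of periods implies its weak form on algebraicity of cohomology classes.
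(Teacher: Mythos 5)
Your proof is correct and is essentially the paper's own argument in different clothing: the motivic periods $[N,v,\xi]^{\mathrm{mot}}$ that you show to be constant are precisely the regular functions $g\mapsto \lambda(gv-\alpha v)$ (up to the constant $\lambda(\alpha v)\in\obQ$) that the paper constructs on $G_\dR(M)/G_\crys(M)$ and shows would be nonzero yet vanish at the Berthelot point if the weak conjecture failed. The only differences are presentational: you argue directly from the injectivity of $\eval_{B}$ via \cref{P:equiv of GPC} where the paper argues by contrapositive in the abstract tannakian setting, and your closing equivariance step plays the role of the paper's explicit choice of $g_0$ and $\lambda$.
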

\begin{proof}
Let us make a tannakian translation of the problem, as in the proof of \cref{T:cH_M is repr precise}. 

Let $G$ be an algebraic group over an algebraically closed field $k$ and let $H\le G$ be a closed reductive subgroup of $G$.
We consider the diagram of categories
\[\begin{tikzcd}
	\Rep_k(G) \arrow[r,"\Forg"] \ar[d,"\spe", swap]& \Vect\\
	\Rep_k(H) \ar[ru, swap,"(-)^H"]
\end{tikzcd}  \]
where $\spe$ is the restriction functor.
Consider  the functor 
\[\cH:=\Nat^\otimes((-)^H\circ\spe, \Forg)\] and a point $\alpha\in\cH(K)$,   where $  K$ is an algebraically closed field extension of $k$.
(Recall that $\cH$ is representable by the variety $G/H=\overline{G/H}$; see \cref{T:Haines,T:cH_M is repr precise}.)

In the case we are interested in we want to apply this abstract setting to $G=G_\dR(M)$, $H= G_\crys(M)$, $k=\bQb$, $K=\bQb_p$ and $\alpha=\fcB$ the Berthelot point.

With this tannakian translation, the contents of \cref{Conj:p adic analog GPC tensor} and
\cref{Conj:p adic analog GPC weak} become, respectively:
\begin{enumerate}
	\item[(1)] $\alpha$ is generic in $\cH$\\
	\item[(2)] for all representations $V\in\Rep_k(G)$ we have that the natural inclusion
	\[ V^G\subset \alpha(V^H)\cap V\]
	is an equality (the intersection is considered inside $V\otimes_k K$).
\end{enumerate}
	We now prove that (1) implies (2). For that, suppose that there exists a representation $V\in\Rep(G)$ for which (2) fails, i.e., that there exists $v\in V^H$ such that $\alpha v\in V\setminus V^G$.
	
	Since $\alpha\in\cH(K)=(G/H)(K)$ we can lift it (non canonically) to an element  $\alpha\in G(K)$.
	Notice now that there exists a $g_0\in G(k)$ such that $g_0v\neq \alpha v$ otherwise we would have $v = \alpha v \in V^G$. Pick moreover a linear function $\lambda\in V^{\vee}$ such that $\lambda(g_0v-\alpha v)\neq 0$.
	We define the function
	\[ f\colon G\to k \qquad f(g):=\lambda(gv-\alpha v). \]
	By construction, $f$ is regular and also right $H$-invariant because $v\in V^H$.
	Moreover it is not zero  as the value $f(g_0)$ is not zero.
	Hence $f$ descends to a non zero regular function $f \in  \cO(G/H)=\cO(\cH)$.
	On the other hand, again by construction, we have $f(\alpha)=0$ which implies that  $\alpha$ is not generic in $\cH$.
\end{proof}

\begin{remark}
	As for classical periods, there is no converse of the above proposition.
	Namely, in the abstract tannakian setting of the proof,  (2) does not imply (1) in general.
	
	For an example, consider the trivial subgroup $H= 1 < \GL(2)(k)$.
	Notice that $\cH = G$. 
	Consider $a,b\in K$ algebraically independent over $k$ and consider the matrix
	\[ \alpha:=\begin{pmatrix}
		a & 1\\
		1 & b
	\end{pmatrix}\in G(K). \]
	We claim that for all representations $V\in \Rep_k(G)$ we have 
	\[ \alpha(V)\cap V=\{0\}  \text{ inside } V\otimes_k K.\]
	Notice first that it is enough to check it on irreducible representations. 
	For $\GL(2)$ they are all of the form $\det^s\otimes \Sym^r k^2$.
	A non enlightening computation shows that for such representations the claim is true.
	 Thus (2) holds in this example.

	However, $\alpha$ is not generic in $G$ as its Zariski closure has dimension two, hence (1) does not hold.
\end{remark}

\begin{remark}
	\cref{strong implies weak} has an analog for classical periods; see for example  \cite[Proposition 7.5.2.2]{Andmot}. (Notice that \textit{loc. cit.} applies under our hypothesis; see \cref{S:conventions}\eqref{hom=num}.)
	The standard way of showing it is to consider the de Rham--Betti realization, which lands in the category of triples $(V,W,c)$ where $V$ and $W$ are $\bQb$-vector spaces and $c$ is a comparison isomorphism $V\otimes \bC \simeq W\otimes \bC.$
	A key point is that this category is tannakian.
	
	In the $p$-adic setting one would like to consider the category of triples $(V,W,i)$ where $V$ and $W$ are $\bQb$-vector spaces and $i$ is an injection $V\otimes \bQb_p \hookrightarrow W\otimes \bQb_p.$
	However, this new category fails to be tannakian (it is not even abelian because cokernels do not exist), hence the classical approach does not apply to \cref{strong implies weak}.
\end{remark}

\section{Mixed case}\label{S:MTM}

In this section we sketch how to adapt the constructions of the paper in the setting of mixed motives as developped by Voevodsky et al. \cite{Voe,Ayoub,CD}.

There are two subtleties of different level.
The first is that for a non proper scheme over $\cO_{\fp}$ the Berthelot comparison theorem \eqref{Eq:the Berthelot compar} does not hold in general (e.g., the
 special fiber by $X_p$ can be empty).
 This problem is solved   considering the subcategory of motives verifying this property, which contains for instance the extensions of motives of smooth proper varieties.
 
The second issue is more serious; we need to work with a subcategory of Voevodsky's motives $\DM(\cO_{\fp})$ which is  tannakian and for which the realization functors are fiber functors. 
But in the mixed context, the only case where this is known is for mixed Tate motives over the ring of integers of a number field.
 
On the other hand, the case of mixed motives seems to be a setting where plenty of interesting periods should appear. 
This comes from the fact that over $\bQb$ there are lots of non trivial extensions of motives, whereas over $\bFb_p$ all extensions conjecturally split,  giving rise to non liftable maps and, in particular, to non liftable  algebraic classes (see \cref{weight}).


\begin{remark}
There exists a triangulated tensor subcategory $\DM^{B}(\cO_{\fp})$ of $\DM(\cO_{\fp})$ containing the motives of smooth proper schemes and for which the Berthelot comparison theorem \eqref{Eq:the Berthelot compar} holds. Indeed by \cite[Remark 4.5]{Vezzani2} there is a natural monoidal transformation (denoted by $\alpha$ in \textit{loc. cit.})   between the crystalline and de Rham realization functors.  In particular the subcategory of $\DM(\cO_{\fp})$ on which the transformation is invertible is indeed a tensor triangulated subcategory. The fact that it contains motives of smooth proper schemes is again in \cite[Remark 4.5]{Vezzani2}.
\end{remark}

\begin{notation}\label{N:assmp mixed motives}
Consider from now on $\mathcal{D}\subset \DM^{B}(\cO_{\fp})$ a sub-triangulated category endowed with a $t$-structure for which the realization functors are $t$-exact.
Write $\mathcal{A}\subset \mathcal{D}$ for the heart and suppose that the realization functors are fiber functors making $\mathcal{A}$   a tannakian category.
Suppose moreover that the restriction of  $\mathcal{A}$ to the special fiber forms a semisimple abelian category.
\end{notation}
\begin{remark}
An example of $\mathcal{A}$ as in \cref{N:assmp mixed motives} is the category of mixed Tate motives \cite{Levine}.
It is conjectured that the whole $\DM$ should be the derived category of an abelian category and that the realization functors should be exact \cite[21.1]{Andmot}.
It is also conjectured that such an abelian category should be semisimple over finite fields.
\end{remark}

\begin{theorem}\label{thmmix}
Let $ \mathcal{A}$ be a category of mixed motives as in \cref{N:assmp mixed motives} and let $M$ be a motive in  $ \mathcal{A}$.
Then all the constructions we made in the previous sections can be adapted to this context, in particular:
\begin{enumerate}
\item The functors and natural transformations $R_\dR,  R_\crys, Z_0,Z_p, \cl_0$ and $\cl_p$  (see \cref{def:realization}).
\item The algebras of Andr\'e's $p$-adic periods $\cP_p(M) \subset \cP_p(\<M\>)$ (see   \cref{D:algebra of periods of M} and \cref{D:field of p-adic periods of <M>}).
\item The tannakian groups  $G_\crys(M) $ and $ G_\dR(M)$  (constructed in 
\cref{T:tann for <M>}).
\item The homogenous space $\cH_M$ (see \cref{D:of H_M}). 
\end{enumerate}
Moreover, the homogenous space  $\mathcal{H}_M$ is representable and the following inequality holds
\[
		 \trdeg_\bQb(\cP_p(\<M\>))\le \dim(\cH_M).
	\]
(see \cref{T:trdeg <= dim group}).

Finally the ramification condition makes sense in this setting (\cref{D:p ramifies in <M>}) and the $p$-adic analogues of (the different versions of) the Grothendieck period conjecture can be formulated (\cref{SS:pGPC}).
\end{theorem}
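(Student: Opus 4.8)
The plan is to revisit the constructions of \cref{SS:important hom=num or section of NUM,SS:periods,SS:homogeneous,SS:ramification,SS:pGPC} one ingredient at a time and to check that each argument carries over verbatim under the hypotheses of \cref{N:assmp mixed motives}; indeed, most of the proofs in the pure case were already phrased with this generality in mind. The one place where the pure case used something special to numerical motives — the semisimplicity furnished by Jannsen's theorem, which forced the tannakian groups to be reductive — is to be replaced by the standing assumption that the restriction of $\mathcal A$ to the special fibre is a semisimple abelian category.

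First I would install the functors. The realizations $R_\dR$ and $R_\crys$ are fibre functors on $\mathcal A$ by hypothesis; the Berthelot comparison \eqref{Eq:the Berthelot compar} holds on $\mathcal D\subset\DM^{B}(\cO_\fp)$ by construction; and the $t$-exact restriction to the special fibre provides a specialization functor $\spe\colon\mathcal A\to\spe(\mathcal A)$. The functors $Z_0,Z_p$ and the natural transformations $\cl_0,\cl_p$ of \cref{def:realization} are then defined word for word by $N\mapsto\Hom(\one,N)$, and they are lax-monoidal for the same reason as before (no Künneth formula for algebraic classes). Next I would run the proof of \cref{T:tann for <M>}: Deligne's reconstruction \cite[Théorème 1.12]{Del-tannak} gives $\<M\>\simeq\Rep_\bQb(G_\dR(M))$, and, factoring $R_\crys$ through $\bQb$ by \cite[Corollaire 6.20]{Del-tannak} exactly as in the pure case, also $\<\spe(M)\>\simeq\Rep_\bQb(G_\crys(M))$, with $G_\crys(M)$ realized as a closed subgroup of $G_\dR(M)$ via the subquotient criterion \cite[Proposition 2.21]{Del-Mil-tannak}. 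The structural difference to keep track of is that $G_\dR(M)$ need no longer be reductive — it may carry a unipotent radical coming from non-split extensions — whereas $G_\crys(M)$ still is, precisely because $\<\spe(M)\>$ sits inside the semisimple category obtained by restricting $\mathcal A$ to the special fibre.

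With this in place, representability of $\cH_M$ should follow exactly as in \cref{T:cH_M is repr precise}: $Z_p\circ\spe$ translates into $(-)^{G_\crys(M)}$, so $\cH_M=\Emb^\otimes((-)^{G_\crys(M)},\Forg)\subseteq\Nat^\otimes((-)^{G_\crys(M)},\Forg)$ on $\Rep_\bQb(G_\dR(M))$, and Haines' theorem \cref{T:Haines} — stated for an arbitrary linear algebraic group and a closed subgroup — identifies $\cH'_M$ with the affine closure $\overline{G_\dR(M)/G_\crys(M)}$ and $\cH_M$ with the image of $G_\dR(M)$ inside it. Since $G_\crys(M)$ is reductive, $G_\dR(M)/G_\crys(M)$ is already an affine variety, hence equal to its affine closure, and the image of $G_\dR(M)$ is all of it; thus $\cH_M=\cH'_M=G_\dR(M)/G_\crys(M)$ is a homogeneous space under $G_\dR(M)$ of dimension $\dim G_\dR(M)-\dim G_\crys(M)$. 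The Berthelot point $\fcB$, the evaluation map $\eval_B$, the algebras $\cP_p(M)\subset\cP_p(\<M\>)$ and $\cAp(M)\subset\cO(\cH_M)$, and the motivic periods $[N,v,\xi]^{\mathrm{mot}}$ are then copied from \cref{BO point,D:algebra of periods of M,D:field of p-adic periods of <M>,motivic periods}, and the inequality $\trdeg_\bQb(\cP_p(\<M\>))\le\dim\cH_M$ drops out, as in \cref{T:trdeg <= dim H_M}, from the fact that $\eval_B$ exhibits $\cP_p(\<M\>)$ as a quotient of $\cO(\cH_M)$ (a residue field has transcendence degree bounded by the dimension of its Zariski closure). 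Finally, the ramification condition makes sense verbatim, since \cref{D:CM motive,D:p ramifies in <M>} only involve endomorphism algebras, and \cref{P:field L for all CM obj}, together with its corollary \cref{C:p does not ramify in <M>}, already applies in this generality — its proof was deliberately written without assuming reductivity, factoring the relevant action through an abelian (possibly non-reductive) tannakian group — so only finitely many primes ramify in $\<M\>$ and the conjectures of \cref{SS:pGPC} can be stated word for word.

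The only delicate point — and the reason the semisimplicity hypothesis on the special fibre is built into \cref{N:assmp mixed motives} — is to notice that representability of $\cH_M$ requires only $G_\crys(M)$, and not $G_\dR(M)$, to be reductive; once this is isolated, I do not expect any genuine obstacle in the argument. The truly hard input, which is why \cref{N:assmp mixed motives} is a hypothesis rather than a theorem, is the construction of a tannakian category $\mathcal A$ with the stated properties beyond the mixed Tate case, and that is not attempted here.
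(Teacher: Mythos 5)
Your proposal is correct and matches the paper's argument: the paper likewise asserts that all definitions and proofs carry over verbatim, and isolates as the only delicate point that the representability argument of \cref{T:cH_M is repr precise} uses reductivity of $G_\crys(M)$ (not of $G_\dR(M)$), which is guaranteed by the semisimplicity hypothesis on the special fibre in \cref{N:assmp mixed motives}. Your additional observation that \cref{P:field L for all CM obj} was already written without assuming reductivity is exactly the point the paper makes in its own proof of that proposition.
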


\begin{remark}
The definitions and the proofs  in \cref{thmmix} are identical to the pure case.
Let us just mention that in the proof of \cref{T:cH_M is repr precise} we used that $G_\crys(M) $ is reductive.
Notice that it is again the case here (even though $G_\dR(M) $ will not be reductive in general) because the category of mixed motives over $\bFb_p$ is semisimple (under our assumptions from \cref{N:assmp mixed motives}).
\end{remark}

\begin{example}\label{ex log}
(Based on \cite[\S 2.9-2.10]{Delignelog} and \cite[Examples 2 and 3]{Yamashita})
Let $a $ be a rational number and $K_a$ be the Kummer motive $K_a=H^1(\mathbb{G}_m,\{1,a\})$.
When $a\neq 0,1,-1$,  it fits in an exact sequence
\[0 \rightarrow \one  \rightarrow K_a \rightarrow \one(-1) \rightarrow 0,\]
which is not split as these extensions are classified by $a\in \bQ^{\times} \otimes_\bZ \bQ$.

This motive has an algebraic class whereas its  dual $K_a^\vee$ does not have algebraic classes (otherwise the exact sequence would split).
For this reason one  expects the $p$-adic periods of $K_a^\vee$ to be interesting numbers. 
In what follows we compute them and we make a comparison with the classical ones\footnote{Here we will write the Betti cohomology with respect to the de Rham cohomology (and not the inverse). 
This seems the best analog to \cref{D:matrix of periods of M}.
Notice that this is the convention of Deligne  and it is the inverse of the one of Andr\'e \cite[Remark 23.1.1.1]{Andmot}.}.

The matrix of complex periods  is
\[ \mathcal{P}_\bC(K^\vee_a):=\begin{pmatrix}
		2\pi i & \log (a) \\
		0 & 1
	\end{pmatrix}, \]
	where we  chose bases for Betti and de Rham realizations adapted to the weight and the Hodge filtrations.
	
Let us now compute the $p$-adic periods. 
(To have a prime of good reduction, $a$ must be invertible modulo $p$.)
	First, the specialization $\spe(K^\vee_a)$ splits canonically
	\[\spe(K^\vee_a)=\one(1) \oplus \one\]
	hence the space of algebraic classes \[Z_p(\spe(K^\vee_a))=\Hom( \one , \spe(K^\vee_a)) = \Hom(\spe(K_a), \one)\] has dimension one and has a canonical generator given by
$(0,\id)$. This corresponds to the unique map $f:\spe(K_a) \rightarrow \one$ that is Frobenius equivariant and is the identity when restricted to $\one$.

	On the other hand, the action of the absolute Frobenius on crystalline cohomology can be computed.
	Its matrix (with respect to the same basis of the de Rham cohomology used above) is 
	\[ \varphi(K_a):=\begin{pmatrix}
		1 & \log (a^{1-p}) \\
		0 & p
	\end{pmatrix}. \]
	This means that the unique map $f$ written in the same basis is
	\[ f=\begin{pmatrix}
		1 & \frac{\log (a^{1-p})}{1-p}
	\end{pmatrix}. \]
	By dualizing back (and reordering the basis with respect to the weight filtration) one gets the $p$-adic periods
	\[ \mathcal{P}_p(K^\vee_a)=\begin{pmatrix}
		\frac{\log (a^{1-p})}{1-p}  \\
		1
	\end{pmatrix}. \]
	It is amusing to notice that the $p$-adic logarithm is not defined at $a$ in general (except if $a$ is $1$ modulo $p$) so that the $p$-adic number $\frac{\log (a^{1-p})}{1-p} $ is the best analog of the complex number $\log (a)$; i.e., passing from $\varphi(K_a)$ to $\mathcal{P}_p(K^\vee_a)$ corrects  the entries and gives the "right"  $p$-adic period. (The correction from  $ \log (a^{1-p}) $ to  $\frac{\log (a^{1-p})}{1-p} $ might also be obtained by an ad hoc change of basis, but recall that the above computations are made with respect to a canonical basis.)
	
	Finally, the groups $G_\dR( K^\vee_a)$ and $G_\crys( K^\vee_a)$ are respectively $\mathbb{G}_ a\rtimes \mathbb{G}_m $ and $\mathbb{G}_m$. In particular the $p$-adic Grothendieck period conjecture predicts that the transcendence degree of the $p$-adic periods of $K^\vee_a$ is $1$, i.e., that $\frac{\log (a^{1-p})}{1-p}$ is transcendental. This is the only non trivial case that we are aware of where the conjecture is actually known as it is follows from a theorem of Mahler \cite{Mahler} (see also Bertrand \cite[Theorem 1]{bertrand}).
	\end{example}

\begin{remark}\label{weight} (Pure vs mixed periods.)
In the conjectural abelian category of mixed motives, any motive is expected to have a weight filtration  which should split over finite fields.
In particular, there should be plenty of non trivial extensions
\[0 \rightarrow N  \rightarrow M \rightarrow \one \rightarrow 0\]
(with $N$ being a motive of negative weight) that split when reduced modulo $p$.
This leads to algebraic classes modulo $p$ that are not liftable, just as the example above, hence they provide a source of non trivial $p$-adic periods.
In the same spirit as the Kummer example, they are related to the coefficients of the matrix of the absolute Frobenius acting on crystalline cohomology.

Let us give another concrete example along these lines. Let $E$ be an elliptic curve over $\bQ$, $O$ be its origin and $P$ be a rational non torsion point.  The localization triangle in the triangulated category of motives 	gives a non trivial extension that  should correspond in the abelian category of motives to a short exact sequence
\[0 \rightarrow \one  \rightarrow   H^1(E,\{O,P\}) \rightarrow  H^1( E )  \rightarrow 0,\]
which we dualize 
\[0 \rightarrow H^1( E )^{\vee}   \rightarrow   H^1(E,\{O,P\})^{\vee} \rightarrow   \one \rightarrow 0,\]
to have the suited form as above  (with $N=H^1( E )^{\vee}$ and $M= H^1(E,\{O,P\})^{\vee}).$

The Frobenius matrix is of the shape
	\[ \varphi(H^1(E,\{O,P\})^{\vee})=\begin{pmatrix}
	a & b & c \\
    d & e & f \\
    0 & 0 & 1
\end{pmatrix} \]
and the $p$-adic period matrix is a vector of the shape
\[ \mathcal{P}_p(H^1(E,\{O,P\})^{\vee})=\begin{pmatrix}
	 \alpha \\
	  \beta \\
	1
\end{pmatrix}.\]
The relation between the two is given by
\[  \begin{pmatrix}
	a & b  \\
	d & e  
	 
\end{pmatrix} \cdot  \begin{pmatrix}
\alpha  \\
\beta 

\end{pmatrix} =
\begin{pmatrix}
	c  \\
	f
	
\end{pmatrix}.
\]

\

The general picture would be the following. Let $M$ be a motive and, for all integers $n$, let $M_n$ be its associated $n$-th graded piece for the weight filtration. 
As all extensions split modulo~$p$, one has
\[\spe(M)=\bigoplus_n \spe(M_n).\]
Moreover, for weight reasons, one has   $Z_p(\spe(M))= Z_p(\spe(M_0))$.

 Fix a basis for each $R_\dR(M_n)$ and  a splitting of the weight filtration of $R_\dR(M)$ and consider the induced basis of  $R_\dR(M)$.
For a given cycle  $\gamma \in Z_p(\spe(M))$, its coordinates with respect to this basis have the form
 	\[ \mathcal{P}_p(\gamma)=
	\begin{pmatrix}
		\vdots  \\
		v_n \\
		\vdots
	\end{pmatrix}\]
where $v_n $ is a vector living in weight $n$.

Notice that the coordinates of $v_0$ are  the pure $p$-adic periods of $\gamma$ seen as an element in $Z_p(\spe(M_0))$ and the total vector 
	$ \mathcal{P}_p(\gamma)$
	is the only Frobenius invariant vector whose zeroth coordinate is precisely $v_0$.
	
	In particular, mixed $p$-adic periods are generated by pure $p$-adic periods and the coefficients of the matrix of the Frobenius. (Recall also that there are more pure $p$-adic periods  than just the coefficients of the Frobenius matrix; see \cref{more than frob,Eg:two ell curves}.)
\end{remark}

\begin{remark}\label{remark ultima} (Frobenius matrix vs $p$-adic periods.)
As the above remarks and examples explained, the coefficients of the Frobenius matrix are intimately related to Andr\'e's $p$-adic periods. Some authors, especially in the context of mixed Tate motives, usually call $p$-adic periods those coefficients \cite{Furusho,Brownp}. Let us explain why Andr\'e's $p$-adic periods are actually better $p$-adic  analogues of classical complex periods.

First,  \cref{ex log} shows that the $p$-adic numbers appearing as Andr\'e's $p$-adic periods have a more striking analogy with the corresponding complex ones than the coefficients of the Frobenius have.  

Second, recall that for motives other than mixed Tate motives there are more Andr\'e $p$-adic periods  than just the coefficients of the Frobenius matrix (\cref{more than frob}).

Third, and more important, the motivic interpretation of Andr\'e's $p$-adic periods seems to behave as in the complex case, contrary to the coefficients of the Frobenius. Indeed we expect that the evalutation map
	$\eval_{B}\colon \cO(\cH_M)\to \bQb_p$ is injective; see Proposition \ref{P:equiv of GPC}. This is the analogous of the fact that the classical evalutation map for complex periods
	$\cO(\Isom(R_\dR, R_B)) \longrightarrow \bC$ is expected to be injective. On the other hand the evaluation map
	  $ \cO (\Aut (R_{\dR})) \longrightarrow \bQ_p$ considered by \cite{Furusho,Brownp}
	 is well known to be non injective at least because the   characteristic polynomial of the Frobenius has integral coefficients and this gives relations between the entries of the Frobenius that cannot be seen in $\Aut (R_{\dR})$. For example if one takes a non CM elliptic curve, the group $\Aut (R_{\dR})$ has dimension $4$ whereas the four coefficients of the Frobenius are related at least by trace and determinant; see \cref{Eg:H_M for E=ordinary}. In our setting we do not know if   $\eval_{B}\colon \cO(\cH_M)\to \bQb_p$ is injective   but we do know at least that the relations induced by the   characteristic polynomial have motivic origins as shown in the following proposition.
	 
\end{remark}
\begin{proposition}\label{pol char}
Let $M$ be a motive and $f\colon \spe(M) \rightarrow \spe(M) $ be an endomorphism of its specialization. We will see $f$ as an algebraic class in positive characteristic, more precisely as an element of $Z_p(M\otimes M^{\vee})$.

Suppose that the realization of $M$ is concentrated in one cohomological degree, hence that the characteristic polynomial of $f$ has algebraic coefficients \cite[Proposition 2.7]{GKL}.
Fix a basis $\{v_i\}$ of $R_\dR(M)$ and let $\{v_i^\vee\}$ be the dual basis. Consider the $p$-adic periods $c_{ij}=[M\otimes M^{\vee},f,v_i \otimes v_j^\vee]\in \bQb_p$ and the motivic $p$-adic periods $c^{\mathrm{mot}}_{ij} = [M\otimes M^{\vee},f,v_i \otimes v_j^\vee]^{\mathrm{mot}}\in \cAp(M\otimes M^{\vee}) $ introduced in \cref{period triples,motivic periods}. 

Then the characteristic polynomial 
\[\det(\{c^{\mathrm{mot}}_{ij}\}-T \cdot \id) \in \cAp(M\otimes M^{\vee}) [T]\supset \bQb[T]\]
actually lies in $\bQb[T]$ and is equal to  
$\det(\{c_{ij}\}-T \cdot \id),$
the characteristic polynomial of $f$.
\end{proposition}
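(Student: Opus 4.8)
The plan is to recognise the matrix $\{c^{\mathrm{mot}}_{ij}\}$ as the matrix of the ``universal realisation'' of the endomorphism $f$ through the tautological point of $\cH_M$, to observe that over the coordinate ring of the de Rham tannakian group this universal realisation is literally conjugate to a constant endomorphism (so its characteristic polynomial has coefficients in $\bQb$), and finally to transfer the resulting identity to the Berthelot point $\fcB$. The single‑cohomological‑degree hypothesis enters only through \cite[Proposition 2.7]{GKL}, which is what makes $\det(\{c_{ij}\}-T\cdot\id)$ the characteristic polynomial of $f$.

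First I would fix a factorisation of $R_\crys$ through $\bQb$ as in \cref{T:tann for <M>}, set $G:=G_\dR(M)$, $H:=G_\crys(M)$, and use \cref{T:cH_M is repr precise} to write $\cH_M=G/H$ with $G$ reductive; since $M$ tensor-generates $\<M\>$, the object $V:=R_\dR(M)$ gives a closed embedding $G\subseteq\GL(V)$. Through $R_\dR$ I identify $\<M\>$ with $\Rep_\bQb(G)$, so that $M\otimes M^\vee$ corresponds to $\End(V)$ with its conjugation $G$-action, while $Z_p\circ\spe$ corresponds to $W\mapsto W^H$ (as in the proof of \cref{T:cH_M is repr precise}). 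Under these identifications $f$ becomes an $H$-invariant endomorphism $\tilde f\in\End(V)^H$, and I would record that $\det(\tilde f-T\cdot\id)$ lies in $\bQb[T]$ for the trivial reason that $\tilde f$ is a $\bQb$-linear endomorphism of the finite-dimensional $\bQb$-vector space $V$. Since the $c^{\mathrm{mot}}_{ij}$ a priori sit in $\cAp(M\otimes M^\vee)\subseteq\cO(\cH_{M\otimes M^\vee})$, I would use the injection $\cO(\cH_{M\otimes M^\vee})\hookrightarrow\cO(\cH_M)$ of \cref{P:surjective HM to HN} to run the computation inside $\cO(\cH_M)=\cO(G/H)$.

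The heart of the argument is the explicit shape of \cref{T:Haines}: the universal element $c_{\mathrm{univ}}\in\cH_M(\cO(\cH_M))$ corresponds, after pulling back along the faithfully flat quotient $\pi\colon G\to G/H$ and writing $g_\tau\in G(\cO(G))$ for the tautological point, to the natural transformation $W^H\ni w\mapsto g_\tau\cdot w\in W\otimes\cO(G)$, for every $W\in\Rep_\bQb(G)$. Applying this with $W=\End(V)$ and $w=\tilde f$, and using that $G$ acts on $\End(V)$ by conjugation, gives $c_{\mathrm{univ}}(f)=g_\tau\,\tilde f\,g_\tau^{-1}$ in $\End(V)\otimes\cO(G)=\End_{\cO(G)}(V\otimes\cO(G))$, with $g_\tau$ invertible because $G\subseteq\GL(V)$. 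By the definition of the motivic periods in \cref{motivic periods}, $c^{\mathrm{mot}}_{ij}$ is exactly the $(i,j)$-entry of the matrix of $c_{\mathrm{univ}}(f)$ in the basis $\{v_i\}$; since conjugate endomorphisms of a free module over a commutative ring have the same characteristic polynomial, this yields $\det(\{c^{\mathrm{mot}}_{ij}\}-T\cdot\id)=\det(\tilde f-T\cdot\id)\in\bQb[T]$ inside $\cO(G)[T]$, hence, using injectivity of $\pi^*$ and of $\cO(\cH_{M\otimes M^\vee})\hookrightarrow\cO(\cH_M)$ together with $\bQb\subseteq\cAp(M\otimes M^\vee)$, already inside $\cAp(M\otimes M^\vee)[T]$.

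To finish I would apply $\eval_B\colon\cAp(M\otimes M^\vee)\to\obQp$: by \cref{surj second} it sends $c^{\mathrm{mot}}_{ij}$ to $c_{ij}$, and being a $\bQb$-algebra homomorphism it commutes with the integer-coefficient polynomial expressions defining a characteristic polynomial and fixes $\bQb[T]$ pointwise, so $\det(\{c_{ij}\}-T\cdot\id)=\eval_B\bigl(\det(\{c^{\mathrm{mot}}_{ij}\}-T\cdot\id)\bigr)=\det(\{c^{\mathrm{mot}}_{ij}\}-T\cdot\id)$. Combined with the observation, immediate from \cref{period triples}, that $\{c_{ij}\}$ is the matrix of $R_\crys(f)$ in the basis $\{v_i\}$ transported by Berthelot's isomorphism, so that $\det(\{c_{ij}\}-T\cdot\id)$ is the characteristic polynomial of $R_\crys(f)$, i.e.\ of $f$, this is precisely the assertion. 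The one genuinely delicate point is the opening of the third paragraph, namely extracting from \cref{T:Haines} that the universal natural transformation is ``act by the tautological group element''; once that is granted, the conjugation identity is transparent and everything else is the tannakian dictionary plus the injectivity statements already established in \cref{P:surjective HM to HN} and \cref{T:cH_M is repr}.
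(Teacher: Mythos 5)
Your proof is correct and rests on exactly the same mechanism as the paper's: the $G_\dR(M)$-action on $\End(R_\dR(M))$ is by conjugation, so the characteristic polynomial is constant along the homogeneous space and its value is read off at the Berthelot point. The only difference is packaging — you verify the identity at the tautological point over $\cO(G)$ and descend along the faithfully flat quotient $G\to G/H$, whereas the paper checks the identity of regular functions pointwise using transitivity of the action; your version has the minor advantage of not implicitly invoking reducedness of $\cH_{M\otimes M^\vee}$.
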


\begin{proof}
By definition of $\cAp(M\otimes M^{\vee})$ we need to prove relations between the functions $c^{\mathrm{mot}}_{ij} $ on the variety $\cH_{M\otimes M^{\vee}} $ from
\cref{D:of H_M,T:cH_M is repr}.
In particular it is enough to show that these relations hold after evaluating at each point of the variety. 
If we evaluate at the Berthelot point $\fcB$ from \cref{BO point}, then the evaluation of $\det(\{c^{\mathrm{mot}}_{ij}\}-T \cdot \id) $ is exactly $\det(\{c_{ij}\} -T \cdot \id) $ because of \cref{surj second}. If we now evaluate at another point we can use that the action of the tannakian group $G_\dR(M)$ is transitive on $\cH_{M\otimes M^{\vee}} $ by \cref{T:cH_M is repr}. 
This action  sends the matrix $\{c_{ij}\} $ to a conjugate matrix, hence the characteristic polynomial is the same.
 \end{proof}

\begin{remark}\label{remark ultima 2} (Continuing \cref{remark ultima}.)
 In the same vein as the above proposition and remark notice that our constructions give a much better bound to the transcendence degree of the coefficients of the Frobenius matrix than the classical ones. If $M$ is a motive and $d_M$ is the transcendence degree of the algebra generated by those coefficients,  one gets $d_M \leq  \dim G_\dR(M)$ from the  constructions in \cite{Furusho,Brownp}  whereas one gets 
\begin{multline}
	d_M \leq \dim G_\dR(M\otimes M^\vee) -   \dim G_\crys(M\otimes M^\vee) \le \\\leq   \dim G_\dR(M) -   \dim G_\crys(M)\le \dim G_\dR(M)
\end{multline}
from \cref{T:trdeg <= dim group}.
All the above inequalities are, in general, strict.
The first one is an equality if and only if the $p$-adic periods of the graded pieces of the weight filtration of $M$ are trivial (see \cref{weight}).
The last one is an equality when $\dim G_\crys(M)=0$, namely  if and only if  $M$ becomes, modulo $p$, a sum  of copies of the unit motive.
The second inequality is more subtle: the motive $M\otimes M^\vee$ corresponds to a representation of $G_\dR(M)$ which might not be faithful. If its kernel is $K$, then we have 
\begin{align*}
G_\dR(M\otimes M^\vee) &= G_\dR(M)/K \quad \text{ and} \\
G_\crys(M\otimes M^\vee) &= G_\crys(M)/(K\cap G_\crys(M)).
\end{align*}
Hence we have equality if and only if the kernel $K$ is included in $G_\crys(M)$.

\end{remark}

	
	

\section{$\ell$-adic version}\label{section:ellGPC}
The aim of this section is to investigate $\ell$-adic versions of the $p$-adic questions we raised before. 
Let us immediately say that these $\ell$-adic counterparts turn out to be false in general. 
We will  still give also some positive results.

The arguments for the negative results are inspired by \cite{AndreBetti}.

\

We will consider the functors from  \cref{def:realization}  as well as the following ones.

\begin{definition}\label{D:lperiods}(cf. \cref{D:matrix of periods of M}.)
Define the following functors on $\MotgrQ$:

\begin{enumerate}
\item (Betti realization) Given a field embedding $\sigma: \obQ \hookrightarrow \bC$    define the Betti realization as 
$R_B=\rHB(\sigma^*(-),\bQ)$,
\item ($\ell$-adic realization) $R_\ell:=\rH\et(-,\bQl)$,
\item (Comparison) $\fc(M):  R_B(M) \otimes \bQ_\ell \simeq R_\ell(M) \simeq R_\ell(\spe(M)) $
where the first isomorphism is Artin's comparison and the second is smooth proper base change.
\end{enumerate}
Given a motive $M\in\Mot(\bQb)_\bQ$ fix a basis $\mathcal{B}$ of the $\bQ$-vector space $Z_p(\spe(M))$ and a basis $\mathcal{B}'$  of the $\bQ$-vector space $R_B(M)$.

We will write $\mathcal{P}_{\ell,\sigma}(M)$ for the  $\bQl$-matrix
 containing the coordinates of $\mathcal{B}$ with respect to $\mathcal{B}'$  and call its entries the $\ell$-adic periods of $M$.
\end{definition}

The $\ell$-adic analogs of \cref{Conj:liftp,Conj:p adic analog GPC} are:
\begin{question}\label{Conj:GPCWell}
For which $M,\sigma$ and $\ell$ is the inclusion 
\[Z_0(M)\subset Z_p(M) \cap R_B(M)
\]
an equality?
\end{question}

\begin{question}\label{Conj:GPCSell}
For which $M,\sigma$ and $\ell$ is the inequality
\[ \trdeg (\mathcal{P}_{\ell,\sigma}(M)) \leq \dim(G_B(M))-\dim(G_\ell(\spe(M)))
\]
an equality?
\end{question}

 \begin{remark}
 	One main difference between these questions and their complex or $p$-adic counterparts is the presence of the choice of $\sigma$. In particular the left hand side in the inclusion in \cref{Conj:GPCWell} and in the inequality in \cref{Conj:GPCSell}   depend a priori on $\sigma$ whereas the right hand sides do not depend. As we will see below, beside some exceptional cases, the left hand sides will actually depend on  $\sigma$ which will  force the above questions to have negative answers in some cases.
 	\end{remark}

\begin{proposition}\label{thmabel}
	Let $A$ be a simple CM abelian variety over a number field with good reduction modulo $p$ and assume moreover that the reduction $A_p$ is simple.
	Then an endomorphism of $A_p$  whose action on  $\rH^1_\ell(A_p)  \simeq \rH^1_\ell(A) \simeq  \rH^1_B(A) \otimes \bQ_\ell$   preserves the rational singular cohomology  $\rH^1_B(A)$  lifts  to an endomorphism  of $A$. 
\end{proposition}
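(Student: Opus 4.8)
The plan is to translate the statement into a question about subalgebras of the division algebra $\End^0(A_p)=\End(A_p)\otimes\bQ$, and then to settle it by a dimension‑divisibility argument. Write $E:=\End^0(A)$. Since $A$ is simple and CM, $E$ is a CM field with $[E:\bQ]=2\dim A$ and $\rH^1_B(A)$ is a free $E$‑module of rank one; in particular $\dim_\bQ\rH^1_B(A)=[E:\bQ]$. Since $A_p$ is simple, $D:=\End^0(A_p)$ is a division $\bQ$‑algebra, and the specialization homomorphism $\End^0(A)\to\End^0(A_p)$, which is injective for abelian schemes with good reduction, realizes $E$ as a $\bQ$‑subalgebra (in fact a maximal subfield) of $D$. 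Hence it suffices to prove that the given endomorphism $f\in\End(A_p)\subset D$ already lies in the subfield $E\subset D$: then $f$ is the reduction of an element of $\End^0(A)$, i.e.\ it lifts up to isogeny, and after clearing a denominator one gets an honest endomorphism of $A$ in the same isogeny class.

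Next I would record the compatibility $\rH^1_\ell(A_p)\simeq\rH^1_\ell(A)\simeq\rH^1_B(A)\otimes_\bQ\bQl$ from \cref{D:lperiods}. Under it, $\rH^1_B(A)$ is a $\bQ$‑structure on $\rH^1_\ell(A_p)$; the action of $E=\End^0(A)$ is $\bQ$‑rational, being the action of $\End^0(A)$ on Betti cohomology extended $\bQl$‑linearly; and, by specialization‑equivariance of the comparison, the faithful action of $D=\End^0(A_p)$ on $\rH^1_\ell(A_p)$ restricts on $E$ to the previous one. The hypothesis of the proposition says precisely that the $\bQl$‑linear operator $f$ preserves the $\bQ$‑subspace $\rH^1_B(A)$, hence restricts to an element $g:=f|_{\rH^1_B(A)}\in\End_\bQ(\rH^1_B(A))$.

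The heart of the argument is then the following. Let $D'\subseteq D$ be the $\bQ$‑subalgebra generated by $E$ and $f$. Each generator preserves $\rH^1_B(A)$ inside $\rH^1_\ell(A_p)$ ($E$ because its action is $\bQ$‑rational, $f$ by hypothesis), so $D'$ preserves it too, and the induced $\bQ$‑algebra homomorphism $D'\to\End_\bQ(\rH^1_B(A))$ is injective because the $D$‑action on $\rH^1_\ell(A_p)$ is faithful; let $R\subseteq\End_\bQ(\rH^1_B(A))$ be its image, the $\bQ$‑subalgebra generated by $E$ and $g$. Being a finite‑dimensional $\bQ$‑subalgebra of the division algebra $D$, the algebra $R\simeq D'$ is itself a division algebra. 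Therefore $\rH^1_B(A)$ is a free left $R$‑module, so $\dim_\bQ R$ divides $\dim_\bQ\rH^1_B(A)=[E:\bQ]$; since $E\subseteq R$ we also have $\dim_\bQ R\ge[E:\bQ]$, and, $E$ being a $\bQ$‑subspace of $R$ of the same finite dimension, $R=E$. In particular $g\in E$, equivalently $f\in D'=E$, which is exactly what was needed.

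I do not anticipate a serious obstacle. The ingredients are all standard: that a simple CM abelian variety has isogeny endomorphism algebra a CM field of degree $2\dim A$ acting with rank one on $\rH^1_B$; that $\End^0(A_p)$ is a division algebra when $A_p$ is simple; faithfulness of the $\ell$‑adic representation on endomorphism algebras; and the elementary fact that a finite‑dimensional $\bQ$‑subalgebra of a division ring is a division ring. The only point requiring care is the bookkeeping of the identifications, so that the three spaces $\rH^1_B(A)$, $\rH^1_\ell(A)$, $\rH^1_\ell(A_p)$ and the two endomorphism actions ($E$ on $\rH^1_B(A)$, $D$ on $\rH^1_\ell(A_p)$) stay compatibly glued; this is also where "simple'' (so that $D$, hence $R$, is a division algebra) and "CM'' (so that the divisibility constraint $\dim_\bQ R\mid[E:\bQ]$ is sharp) are genuinely used.
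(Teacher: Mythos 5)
Your proof is correct, but it runs on a different engine than the paper's. The paper also reduces to showing that the given endomorphism $g\in\End(A_p)\otimes\bQ$ lies in the CM field $F=\End(A)\otimes\bQ$, but it does so pointwise: since $\rH^1_B(A)$ is free of rank one over $F$, the group $F^\times$ acts transitively on $\rH^1_B(A)\setminus\{0\}$, so one finds $f\in F$ with $g(v)=f(v)$ for a single nonzero $v$; then $g-f$ has nonzero kernel on cohomology, hence cannot be an isogeny, and simplicity of $A_p$ forces $g-f=0$. You instead run a structural dimension count: the $\bQ$-subalgebra $D'\subseteq\End^0(A_p)$ generated by $E$ and $f$ preserves the rational structure $\rH^1_B(A)$, is a division algebra (finite-dimensional subalgebra of a division algebra), and $\rH^1_B(A)$ is a nonzero $D'$-module, so $\dim_\bQ D'$ divides $\dim_\bQ\rH^1_B(A)=[E:\bQ]$, forcing $D'=E$. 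Both arguments consume the hypotheses at the same two places (CM gives rank one over the field; simplicity of $A_p$ gives the division-algebra property), and all the ingredients you invoke are standard; the paper's version is shorter and avoids discussing subalgebras, while yours is more robust in that it does not need to produce a specific comparison element $f\in F$ and makes transparent exactly which numerical coincidence ($\dim_\bQ R\mid[E:\bQ]$ together with $E\subseteq R$) pins everything down. The only cosmetic caveat, present in the paper's proof as well, is that the argument produces a lift in $\End(A)\otimes\bQ$, i.e.\ up to isogeny, which is the intended meaning in the paper's motivic context.
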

\begin{remark}
Given a simple CM abelian variety over a number field, the set of prime numbers $p$ such that its reduction  modulo $p$ is simple has density one \cite[Theorem 3.1]{Murty}.
\end{remark}
\begin{proof}
Denote by $A_p$ the reduction modulo $p$ of $A$.
By hypothesis, there is a CM number field $F\subset \End(A)\otimes_\bZ\bQ$ acting transitively on $\rHB^1(A,\bQ)\setminus\{0\}$. 

Let $g$ be an endomorphism of $A_p$.
Let us suppose that $g$ acts rationally on $\rHB^1(A,\bQ)$.
Pick a non zero vector $v$  in  $\rHB^1(A,\bQ)$.
Because of the transitivity of the action of $F$, there is an $f\in F$ such that $g(v)=f(v)$.
This means that $g-f$ has non zero kernel on $\rHB^1(A,\bQ)$. 
Hence, as an endomorphism of $A_p$, $g-f$ has a non zero kernel. 
Such a kernel defines a subabelian variety of $A_p$ which was assumed to be simple.
Therefore we deduce that $g-f=0$, hence $g$ lifts to characteristic zero.
\end{proof}
\begin{corollary}\label{corabelell}
 Let $A$ be an abelian variety as in \cref{thmabel}  and $M$ be the motive $\mathfrak{h}^1(A) \otimes \mathfrak{h}^1(A)^{\vee}$ or the motive $\mathfrak{h}^2(A)(1)$. Then \cref{Conj:GPCWell} has a positive answer for $M$ (independently of $\sigma$ and $\ell$).
\end{corollary}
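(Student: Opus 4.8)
The plan is to deduce both cases from \cref{thmabel}. The case $M=\cEnd(\mathfrak{h}^1(A))=\mathfrak{h}^1(A)\otimes\mathfrak{h}^1(A)^{\vee}$ will follow almost directly, and the case $M=\mathfrak{h}^2(A)(1)$ will be reduced to it. The first observation is that for \emph{every} motive $M$ the inclusion $Z_0(M)\subseteq Z_p(\spe(M))\cap R_B(M)$ holds inside $R_\ell(\spe(M))$: an algebraic class lifting to characteristic zero specialises to an algebraic class modulo $p$, and its $\ell$-adic realisation lies in the $\bQ$-subspace $R_B(M)$ because Artin's comparison $\fc(M)\colon R_B(M)\otimes\bQ_\ell\simeq R_\ell(M)=R_\ell(\spe(M))$ matches cycle classes with cycle classes. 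Thus \cref{Conj:GPCWell} for $M$ is precisely the assertion that this inclusion is an equality.

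Next I would reduce the $\mathfrak{h}^2(A)(1)$ case to the $\cEnd(\mathfrak{h}^1(A))$ case. Since the functors $Z_0$, $Z_p\circ\spe$ and $R_B$ are additive and the comparison $\fc$ is compatible with direct sum decompositions, a decomposition $M\simeq N\oplus N'$ yields $Z_p(\spe(M))\cap R_B(M)=\bigl(Z_p(\spe(N))\cap R_B(N)\bigr)\oplus\bigl(Z_p(\spe(N'))\cap R_B(N')\bigr)$ and likewise $Z_0(M)=Z_0(N)\oplus Z_0(N')$; combined with the inclusion of the previous paragraph, this shows that \cref{Conj:GPCWell} for $M$ implies \cref{Conj:GPCWell} for every direct summand of $M$. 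It then suffices to recall that $\mathfrak{h}^2(A)(1)$ is a direct summand of $\cEnd(\mathfrak{h}^1(A))$: a polarisation of $A$ gives an isomorphism $\mathfrak{h}^1(A)^{\vee}\simeq\mathfrak{h}^1(A)(1)$, hence $\cEnd(\mathfrak{h}^1(A))\simeq\bigl(\mathfrak{h}^1(A)^{\otimes 2}\bigr)(1)\simeq\bigl(\Sym^2\mathfrak{h}^1(A)\bigr)(1)\oplus\bigl(\wedge^2\mathfrak{h}^1(A)\bigr)(1)$, and $\wedge^2\mathfrak{h}^1(A)=\mathfrak{h}^2(A)$.

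Finally I would settle the case $M=\cEnd(\mathfrak{h}^1(A))$ using \cref{thmabel}. Here algebraic classes on $\spe(M)=\cEnd(\mathfrak{h}^1(A_p))$ are the same as morphisms $\mathfrak{h}^1(A_p)\to\mathfrak{h}^1(A_p)$ in $\Mot(\obF_p)$, which by Tate's theorem (over finite subfields of $\obF_p$, then passing to the limit) is $\End(A_p)\otimes_\bZ\bQ$; similarly $Z_0(M)=\End(A)\otimes_\bZ\bQ$ and $Z_0(M)\to Z_p(\spe(M))$ is the reduction map. Under $R_\ell(\spe(M))=\End(\rH^1_\ell(A_p))$, an element $g\in Z_p(\spe(M))$ goes to the endomorphism it induces on $\rH^1_\ell(A_p)\simeq\rH^1_B(A)\otimes\bQ_\ell$, and $g$ lies in $R_B(M)=\End_\bQ(\rH^1_B(A))$ exactly when this endomorphism preserves the rational structure $\rH^1_B(A)$. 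Hence an element of $Z_p(\spe(M))\cap R_B(M)$ is, up to a rational scalar, an endomorphism of $A_p$ acting rationally on $\rH^1_B(A)$, and by \cref{thmabel} it lifts to an endomorphism of $A$, so it lies in $Z_0(M)$; this gives the reverse inclusion. I do not expect a genuine obstacle: the content of the corollary is entirely \cref{thmabel}, and the only points requiring care are the identification of algebraic classes on $\cEnd(\mathfrak{h}^1(A_p))$ with $\End(A_p)\otimes_\bZ\bQ$ (Tate's theorem) and the translation of the condition ``$g\in R_B(M)$'' into ``$g$ acts rationally on $\rH^1_B(A)$'', both of which are routine.
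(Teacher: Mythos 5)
Your proof is correct, and since the paper states this corollary without any proof, your argument — identifying $Z_p(\spe(\cEnd(\mathfrak{h}^1(A))))$ with $\End(A_p)\otimes\bQ$, translating membership in $R_B(M)$ into rationality of the action on $\rH^1_B(A)$ so that \cref{thmabel} applies, and then handling $\mathfrak{h}^2(A)(1)$ as a direct summand of $\cEnd(\mathfrak{h}^1(A))$ via a polarization — is exactly the intended one, with the summand reduction being a clean way to formalize the second case. One small correction: the identification $\End_{\Mot(\obF_p)}(\mathfrak{h}^1(A_p))=\End(A_p)\otimes\bQ$ is not Tate's theorem but the full faithfulness of $A\mapsto\mathfrak{h}^1(A)$ on abelian varieties up to isogeny (a fact about divisorial correspondences valid over any field); Tate's theorem concerns $\End(A_p)\otimes\bQ_\ell$ versus Galois-equivariant endomorphisms of $\rH^1_\ell$, which is not what is needed here.
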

\begin{proposition}\label{thmsurf}
 \cref{Conj:GPCWell} has a positive answer for surfaces with maximal Picard rank (independently of $\sigma$ and $\ell$). 
\end{proposition}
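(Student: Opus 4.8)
\medskip
\noindent\textbf{Proof strategy.} Let $S\to\Spec\cO_\fp$ be the smooth projective surface, with generic fibre $S_0$ of maximal geometric Picard rank and special fibre $S_p$. The plan is to reduce at once to the motive $M=\mathfrak h^2(S)(1)$ (every other graded piece of $\mathfrak h(S)$ carries no algebraic class in a degree where it could matter), for which $Z_0(M)=\operatorname{NS}(S_{0,\bQb})_\bQ$, $Z_p(M)=\operatorname{NS}(S_p)_\bQ$, $R_B(M)=\rH^2_B(S_0,\bQ)(1)$, all realised inside $R_\ell(M)=\rH^2\et(S_{0,\bQb},\bQl)(1)\simeq\rH^2\et(S_p,\bQl)(1)$ via Artin's comparison and smooth proper base change (\cref{D:lperiods}). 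So the statement amounts to the equality, inside $\rH^2\et(S_p,\bQl)(1)$,
\[ \operatorname{NS}(S_{0,\bQb})_\bQ=\operatorname{NS}(S_p)_\bQ\cap\rH^2_B(S_0,\bQ)(1), \]
whose inclusion $\subseteq$ is the compatibility of the cycle class map with specialisation of divisors.

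For $\supseteq$ I would first turn maximality of the Picard rank into a positivity statement. From $\operatorname{NS}(S_{0,\bQb})_\bC\subseteq\rH^{1,1}(S_0)$ and $\dim_\bQ\operatorname{NS}(S_{0,\bQb})_\bQ=h^{1,1}(S_0)$ one gets $\operatorname{NS}(S_{0,\bQb})_\bC=\rH^{1,1}(S_0)$; combined with non-degeneracy of the cup product on $\operatorname{NS}$ (Hodge index over $\bC$), this yields an orthogonal decomposition of rational Hodge structures
\[ \rH^2_B(S_0,\bQ)(1)=\operatorname{NS}(S_{0,\bQb})_\bQ\oplus T,\qquad T\otimes_\bQ\bC=\rH^{2,0}(S_0)\oplus\rH^{0,2}(S_0), \]
and the Hodge--Riemann relations (equivalently, the Hodge index theorem once more) show that the cup product form is \emph{positive} definite on $T\otimes_\bQ\bR$. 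Given $\gamma$ in the right-hand side above, I would write $\gamma=a+b$ with $a\in\operatorname{NS}(S_{0,\bQb})_\bQ$ and $b\in T$; since $a$ already lies in $Z_0(M)$, it is enough to prove $b=0$.

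The heart of the argument is then a sign clash reminiscent of the proof of \cref{thmabel}. On one side, $b=\gamma-a$ lies in $\operatorname{NS}(S_p)_\bQ$, because $\gamma$ is a divisor class on $S_p$ by hypothesis and so is the specialisation of $a$; on the other side $b\in T$ is orthogonal to every divisor class coming from $S_0$, in particular to the specialisation $h$ of an ample class on $S_{0,\bQb}$ (which is ample on $S_p$, with $h\cdot h>0$). The rational number $b\cdot b$ — the same whether computed by cup product in $\rH^2_B(S_0,\bQ)(1)$, by cup product in $\rH^2\et(S_p,\bQl)(1)$, or as an intersection number on $S_p$ — satisfies $b\cdot b>0$ unless $b=0$ by positive definiteness on $T\otimes\bR$, whereas $b\cdot h=0$ together with $h\cdot h>0$ forces $b\cdot b\le 0$ by the Hodge index theorem on the surface $S_p$. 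Hence $b=0$ and $\gamma\in Z_0(M)$. Since only the cup product pairings and the Hodge index theorem in characteristics $0$ and $p$ enter, this is manifestly independent of $\sigma$ and $\ell$.

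The step I expect to be the most delicate is not a hard theorem but a careful bookkeeping one: checking that the several pairings in play (Betti on $S_0$, \'etale on $S_0$ and on $S_p$, and the numerical intersection pairing on $\operatorname{NS}(S_p)$) agree on the class $b$, and pinning down the sign in the Hodge--Riemann relation so that $T\otimes\bR$ comes out positive rather than negative definite. That sign is exactly what makes the obstruction genuine; it is also the reason why a jump of the Picard number between $S_0$ and $S_p$ — which does occur — is nevertheless perfectly compatible with the statement.
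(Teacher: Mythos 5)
Your proposal is correct and follows essentially the same route as the paper: maximal Picard rank makes the transcendental part $T$ (with $T\otimes\bC=\rH^{2,0}\oplus\rH^{0,2}$) defined over $\bQ$, Hodge--Riemann makes the cup product positive definite there, and the Hodge index theorem on $S_p$ (negative definiteness on the orthogonal complement of an ample/hyperplane class among algebraic cycles) produces the same sign clash. The paper packages this via the Kahn--Murre--Pedrini decomposition $\mathfrak h^2(S)(1)=\one^{\oplus\rho}\oplus M$ and shows $Z_p(\spe(M))\cap \rHB(M)=\{0\}$ directly, which is just your $\gamma=a+b$, $b=0$ argument in motivic clothing.
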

\begin{proof}
Let $S$ be the surface in characteristic zero and let $S_p$ be its reduction to characteristic $p$.
The motive $\fh^2(S)(1)$ is endowed with a quadratic form that is realized in singular and $\ell$-adic cohomology as the cup product and in algebraic cycles as the intersection product, compatible under the class morphism and the comparison isomorphisms.

From \cite[14.2.3]{KMP} we have an orthogonal motivic decomposition
\[\mathfrak{h}^2(S)(1)=\one^{\oplus \rho}\oplus M\]
where $M$ is a motive whose realization is the  transcendental part of the Betti cohomology and $\rho=\rho(S)=\dim(\rH^2(S,\bQ)\cap \rH^{1,1}(S,\bC))$ is the Picard rank. 
The assumption $\rho(S)=h^{1,1}(S)$ implies that $\rHB(M)\otimes \bC= H^{2,0}(S,\bC)\oplus H^{0,2}(S,\bC)$ and moreover that the latter $\bC$-vector space is defined over $\bQ$ (see for example \cite[Proposition 1]{Beauville_surf_max_Pic}).

The statement we need to prove reduces to the fact that an algebraic class on $\spe(M)$ cannot be in $\rHB(M)$, i.e., that $Z_p(\spe(M))\cap\rHB(M)=\{0\}$, where the comparison isomorphism and the class morphism are used implicitly.

To show this fact we will use the above quadratic form restricted to  $M$.
The vector space $H^{2,0}(S,\bC)\oplus H^{0,2}(S,\bC)$ is defined over $\bR$ and the Hodge--Riemann relations say that the quadratic form is positive definite on it and therefore also on $\rHB(M)$.

On the other hand, the Hodge index theorem for $S_p$ tells us that this quadratic form has signature $(1,\rho_p-1)$ on the algebraic classes on $\fh^2(S_p)(1)$, i.e. on $Z_p(\spe(\one^{\oplus \rho}\oplus M))$. 
However, the positive part is already realized from algebraic cycles in $\one^{\oplus \rho}$ (for instance, by a hyperplane section) hence the quadratic form is negative definite on $Z_p(\spe(  M))$.

The $\bQ$-vector space $\rHB(M)\cap Z_p(\spe(M))$ has a quadratic form which is both positive and negative definite, this forces the intersection to be zero.
\end{proof}
\begin{proposition}
 \cref{Conj:GPCWell}  
  has a positive answer for polarized motives of rank two and of Hodge type $ (-i,i),(i,-i)$ with $i$ odd and it has a positive answer for the Fermat cubic fourfold  (independently of $\sigma$ and $\ell$).
\end{proposition}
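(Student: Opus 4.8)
The plan is to reuse the signature argument behind the proofs of \cref{thmabel,thmsurf}: on the $\ell$-adic realization one produces a quadratic form which is definite with one sign on the Betti lattice and definite with the opposite sign on the algebraic classes of the special fibre, so that a common nonzero vector is impossible. So let $M\in\MotgrQ$ be a polarized motive of rank two and Hodge type $(-i,i),(i,-i)$ with $i$ odd, and write $\phi\colon M\otimes M\to\one$ for the polarization. Since $M$ has no $(0,0)$-part in its Hodge decomposition ($i\neq0$) we have $Z_0(M)=0$, so \cref{Conj:GPCWell} for $M$ amounts to the vanishing $Z_p(\spe(M))\cap R_B(M)=\{0\}$, the intersection being taken inside $R_\ell(\spe(M))=R_\ell(M)\simeq R_B(M)\otimes\bQl$ via the comparisons of \cref{D:lperiods}. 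Let $q$ be the $\bQ$-valued symmetric bilinear form obtained by realizing $\phi$; by functoriality it restricts to a $\bQ$-valued form on each of the two $\bQ$-structures $R_B(M)$ and $Z_p(\spe(M))$ of $R_\ell(M)$, and these restrictions agree on the intersection. First I would run the Hodge--Riemann bilinear relations for the polarized weight-zero Hodge structure $R_B(M)$: the first relation gives $q(x,x)=0$ on the line $M^{-i,i}\subset R_B(M)\otimes\bC$, which (writing $x=u+\sqrt{-1}\,v$ with $u,v$ a real basis) forces the Gram matrix of $q$ in $u,v$ to be scalar; the positivity relation reads $(-1)^i q(x,\bar x)>0$ because the Weil operator acts on $R_B(M)\otimes\bC$ by the scalar $(-1)^i$, and for $i$ odd this says $q$ is \emph{negative definite} on $R_B(M)\otimes\bR$, independently of $\sigma$. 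In particular $q(\gamma,\gamma)<0$ for every nonzero $\gamma\in R_B(M)$.

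The complementary ingredient is that $q$ is \emph{positive definite} on $Z_p(\spe(M))$. In its abstract form this is Grothendieck's standard conjecture of Hodge type for the polarized motive $\spe(M)$ over $\bFb_p$: in the normalization making Hodge--Riemann clean, the polarization is positive definite on $\Hom_{\Mot(\bFb_p)}(\one,\spe(M))$. For $i=1$ this is, exactly as in the proof of \cref{thmsurf}, the classical Hodge index theorem. For general odd $i$ the hypotheses already push $M$ into the range where this conjecture is a theorem: definiteness of $q$ gives $\MT(R_B(M))\subseteq\mathrm{O}(q)$ with $\mathrm{O}(q)$ anisotropic, so the identity component of the (non-trivial) Mumford--Tate group is a rank-one torus, $R_B(M)$ is of CM type and $M$ is of abelian type, for which the standard conjecture of Hodge type over $\bFb_p$ is known (Milne, via the Tate conjecture for abelian varieties over finite fields). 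Granting this, a nonzero $\gamma\in Z_p(\spe(M))\cap R_B(M)$ would satisfy $q(\gamma,\gamma)>0$ and $q(\gamma,\gamma)<0$ at once; hence the intersection is $\{0\}=Z_0(M)$, proving \cref{Conj:GPCWell} for $M$ for all $\sigma$ and all $\ell$.

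For the Fermat cubic fourfold $X\subset\bP^5$ I would reduce to the previous case. One decomposes $\mathfrak h(X)$ into Künneth pieces and further cuts the primitive $\mathfrak h^4(X)$, by an algebraic idempotent, as $N\oplus T$, where $N$ realizes $H^{3,1}\oplus H^{1,3}$ (so $N(2)$ is a rank-two motive of Hodge type $(1,-1),(-1,1)$, the $i=1$ case) and $T$ realizes $H^{2,2}_{\mathrm{prim}}$. For the Fermat cubic fourfold $T$ is spanned by algebraic cycles — the Hodge conjecture holds, $\mathfrak h(X)$ being built from Jacobi-sum motives of the degree-three Fermat curve — whence $T\simeq\one(-2)^{\oplus20}$, and all the remaining Künneth pieces of $X$ are sums of Tate motives. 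For a Tate motive $\one(n)$ one has $Z_0(\one(n))=Z_p(\spe(\one(n)))$, equal to $\bQ$ if $n=0$ and to $0$ otherwise, and the two $\bQ$-structures inside $R_\ell$ coincide because powers of the hyperplane class are defined over $\bZ$; so \cref{Conj:GPCWell} is trivial for each such summand. For $N(2)$ it holds by the rank-two case, the characteristic-$p$ positivity being available because $N$, as a piece of a Fermat motive, is CM. Adding up over the Künneth summands yields \cref{Conj:GPCWell} for $X$, again for all $\sigma$ and $\ell$.

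The step I expect to be the main obstacle is precisely the characteristic-$p$ positivity invoked in the rank-two argument: in full generality it is the standard conjecture of Hodge type, which is open in characteristic $p$. What rescues the statement is that the hypothesis "$i$ odd" forces the polarization to be $\bR$-definite, hence forces $M$ to be of abelian (CM) type, which is the one situation where that conjecture is known; one must also be careful with the sign bookkeeping in the Hodge--Riemann computation, since it is exactly what makes the $i$ odd case work while the $i$ even case genuinely fails.
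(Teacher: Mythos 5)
Your overall architecture is the right one and matches the paper's: run the signature argument of \cref{thmsurf}, getting definiteness of one sign on $R_B(M)$ from the Hodge--Riemann relations (your Weil-operator computation, giving negative definiteness for $i$ odd, is correct) and definiteness of the opposite sign on $Z_p(\spe(M))$, so the intersection is zero. The paper's proof is exactly this, with the Fermat cubic fourfold handled by the same reduction you describe.

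The genuine gap is in your justification of the characteristic-$p$ positivity, which you yourself flag as the main obstacle. You argue: $q$ definite $\Rightarrow$ the Mumford--Tate group is a torus $\Rightarrow$ $R_B(M)$ is of CM type $\Rightarrow$ $M$ is of abelian type $\Rightarrow$ the standard conjecture of Hodge type over $\bFb_p$ holds ``by Milne, via the Tate conjecture.'' Two of these arrows fail. First, a rank-two Hodge structure of CM type does not make the \emph{motive} of abelian (or CM) type; that implication is itself of the strength of the Hodge conjecture, and the proposition is stated for arbitrary polarized rank-two motives of the given Hodge type. Second, and more seriously, the standard conjecture of Hodge type for abelian varieties over finite fields is \emph{not} an unconditional consequence of Tate's theorem or of Milne's work (Milne's reduction is conditional on the Hodge conjecture for CM abelian varieties), so even granting abelian type you have not produced the positivity. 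The positivity of the polarization on $Z_p(\spe(M))$ for rank-two motives of Hodge type $(-i,i),(i,-i)$ with $i$ odd is precisely the main theorem of \cite{Ancona}, proved there by a local-global argument on quadratic forms combining the archimedean Hodge--Riemann information with $p$-adic (crystalline, Mazur-type) constraints and the product formula --- a substantially different and deeper mechanism than the one you invoke. The same remark applies to your treatment of the Fermat cubic fourfold: the rank-two piece of $\fh^4$ sits in codimension two, where the Hodge index theorem is unavailable, and the needed positivity is \cite[Proposition A.9]{Ancona}, not a consequence of the motive being CM.
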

\begin{proof}
The proof goes as in  \cref{thmsurf}.
The Hodge index theorem is replaced by the main result of \cite{Ancona}.
For the example of the Fermat cubic fourfold; see \cite[Proposition A.9]{Ancona}.
\end{proof}

We end the  section  with some negative answers to \cref{Conj:GPCWell} and   to \cref{Conj:GPCSell} in the case of elliptic curves.

\begin{proposition}\label{prop:SerrenonCM} Let $E$ be an elliptic curve without CM and $p$ be a prime of good reduction. 
Let $f$ be an endomorphism of $E$ modulo   $p$ which does not lift to characteristic zero. 
Then there exist an $\ell$ and a $\sigma$ such that the action of $f$ on $H^1_\ell(E)$ preserves the rational structure coming from singular cohomology. 

Hence, for such an $\ell$ and $\sigma$ and for the motive $M=\mathfrak{h}^1(E) \otimes \mathfrak{h}^1(E)^{\vee}$, \cref{Conj:GPCWell} has a negative answer.
\end{proposition}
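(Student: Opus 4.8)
The plan is to exploit the freedom in the choice of the complex embedding $\sigma$: when $E$ has no complex multiplication the Galois group acts on the $\ell$-adic realization with open image, which will let us rotate the Betti $\bQ$-structure onto one that is preserved by $f$. Since $\End(E)=\bZ$, the endomorphism $f$ fails to lift precisely when it is not a scalar, so $L':=\bQ(f)\subseteq\End(E_p)\otimes\bQ$ is an imaginary quadratic field and $\mathfrak h^1(E_p)$ carries an $L'$-action. I would choose the prime $\ell$ to be different from $p$, inert in $L'$, and prime to the conductor of $\bZ[f]$; infinitely many such $\ell$ exist by Chebotarev. Then $L'\otimes_\bQ\bQ_\ell$ is the unramified quadratic extension $\bQ_{\ell^2}$ of $\bQ_\ell$, and, $\ell$ being prime to $p$, smooth proper base change (\cref{D:lperiods}) identifies $\rH^1_\ell(E)$ with $\rH^1_\ell(E_p)$; as a module under $L'\otimes\bQ_\ell$ the latter is a one-dimensional $\bQ_{\ell^2}$-vector space, on which $f$ acts as multiplication by its nonzero image in $\bQ_{\ell^2}$. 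A direct computation then shows that a $\bQ$-structure $W$ on $\rH^1_\ell(E)$ is stable under $f$ if and only if it is an \emph{$L'$-line}, i.e. a two-dimensional $\bQ$-subspace stable under multiplication by $L'$, and that such lines exist and $\bQ_\ell$-span $\rH^1_\ell(E)$ (here one uses that $\ell$ is inert). Fix one such line $W_1$.

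Next I would identify the $\bQ$-structures coming from the various embeddings as a single Galois orbit. Spreading $E$ over a number field $L$, which I may enlarge so that $E_L$ has good reduction $E_p$ at a prime $\mathfrak p_L\mid p$ and $f$ is defined over the residue field, let $\rho_\ell\colon\Gal(\overline L/L)\to\GL(\rH^1_\ell(E))\cong\GL_2(\bQ_\ell)$ be the $\ell$-adic Galois representation, and let $W_0:=\sigma_0^*\rH^1_B(E,\bQ)\subseteq\rH^1_\ell(E)$ be the $\bQ$-structure obtained from a fixed embedding $\sigma_0\colon\overline{\bQ}\hookrightarrow\bC$ via Artin's comparison. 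Every embedding extending the archimedean place $\sigma_0|_L$ has the form $\sigma_0\circ\tau$ with $\tau\in\Gal(\overline L/L)$; comparing the Artin isomorphisms for $\sigma_0$ and $\sigma_0\circ\tau$ shows that, as $\tau$ varies, the $\bQ$-structures produced on $\rH^1_\ell(E)$ run precisely through the $\rho_\ell(\Gal(\overline L/L))$-orbit of $W_0$. It then suffices to check that this orbit contains $W_1$. Because $E$ has no CM, Serre's open image theorem gives that $\rho_\ell(\Gal(\overline L/L))$ is open in $\GL(T_\ell E)\cong\GL_2(\bZ_\ell)$, hence open in $\GL_2(\bQ_\ell)$. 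The group $\GL_2(\bQ_\ell)$ acts transitively on the set of $\bQ$-structures of $\rH^1_\ell(E)$, with stabilizer of $W_0$ equal to $\GL_\bQ(W_0)\cong\GL_2(\bQ)$, and $\GL_2(\bQ)$ is dense in $\GL_2(\bQ_\ell)$; since a dense subgroup meets every nonempty open set, it meets every coset of the open subgroup $\rho_\ell(\Gal(\overline L/L))$, whence $\rho_\ell(\Gal(\overline L/L))\cdot\GL_2(\bQ)=\GL_2(\bQ_\ell)$ and $\rho_\ell(\Gal(\overline L/L))$ already acts transitively on $\bQ$-structures. Thus some $\tau$ carries $W_0$ to $W_1$, and $\sigma:=\sigma_0\circ\tau$ is the desired embedding: $f$ preserves $\sigma^*\rH^1_B(E,\bQ)$ inside $\rH^1_\ell(E)$.

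For the final assertion, take $M=\mathfrak h^1(E)\otimes\mathfrak h^1(E)^\vee$. By the above, $f$ restricts to a $\bQ$-linear endomorphism of $\sigma^*\rH^1_B(E,\bQ)$, so its cycle class lies in $Z_p(\spe(M))\cap R_B(M)$, whereas $Z_0(M)=\End(E)\otimes\bQ=\bQ$ maps onto the scalars of $R_\ell(M)$ and $f$ is not a scalar; hence the inclusion $Z_0(M)\subseteq Z_p(M)\cap R_B(M)$ is strict and \cref{Conj:GPCWell} has a negative answer for this $M$, $\ell$ and $\sigma$. The step I expect to be the crux is the compatibility assertion in the second paragraph — that changing the embedding $\sigma$ twists the Betti $\bQ$-structure of $\rH^1_\ell(E)$ exactly by the $\ell$-adic Galois action. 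This is the mechanism behind André's dependence-on-$\sigma$ phenomenon (cf. \cite{AndreBetti}) and needs to be formulated carefully with respect to contravariance and base-change conventions; granting it, Serre's theorem together with the density argument complete the proof, up to the routine verifications — the finiteness conditions on $\ell$, and the identifications $Z_p(\spe(M))=\End(E_p)\otimes\bQ$, $R_B(M)=\End_\bQ(\rH^1_B(E,\bQ))$ and $R_\ell(M)=\End_{\bQ_\ell}(\rH^1_\ell(E))$.
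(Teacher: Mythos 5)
Your proof is correct, and it rests on the same two pillars as the paper's argument: the compatibility \eqref{change sigma} identifying a change of embedding $\sigma\mapsto\sigma\circ\tau$ with the action of $\tau$ on the $\ell$-adic realization (which the paper also asserts without further justification, so you are not assuming more than it does), and Serre's theorem on the image of Galois for non-CM elliptic curves. The only genuine difference is in the middle step. The paper uses the full surjectivity $\rho_\ell(\Gal)=\GL(T_\ell(E))$ (valid for all but finitely many $\ell$) to move the explicit $\bZ_\ell$-basis $v,f(v)$ into the Betti lattice, after choosing $\ell$ so that $\det f$ is an $\ell$-adic unit and the minimal polynomial of $f$ is separable mod $\ell$; in that basis $f$ is a rational companion matrix. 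You instead characterize the $f$-stable $\bQ$-structures as $\bQ(f)$-lines, exhibit one, and reach it using only the openness of $\rho_\ell(\Gal)$ together with the density of $\GL_2(\bQ)$ (the stabilizer of a $\bQ$-structure) in $\GL_2(\bQ_\ell)$. Your variant is marginally more robust in that openness suffices and no integral basis needs to be matched exactly, at the cost of the extra bookkeeping with $L'=\bQ(f)$; note that inertness of $\ell$ in $L'$ is not actually needed, since in the split case any $v$ outside the two eigenlines still generates $\rH^1_\ell(E)$ over $L'\otimes\bQ_\ell$. The concluding deduction for \cref{Conj:GPCWell} agrees with the paper's.
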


\begin{proof} Let $K$ be a number field over which $E$ is defined and let $G$ be its absolute Galois group. 
For each $g$, write $g^*$ for the action of $g$ on the $\ell$-adic Tate module $T_\ell(E)$.
For each  field embedding $\sigma: \obQ \hookrightarrow {\bC}$  
write $\sigma^*$ for the  identification $T_\ell(E)=\Lambda \otimes_{\bZ} \bZ_\ell$ of the $\ell$-adic Tate module with singular homology $\Lambda:=\rH_1(E,\bZ)$. 
Notice that we have the relation
\begin{equation}\label{change sigma}
(\sigma \circ g)^*=\sigma^*\circ g^*.\end{equation}
By a  theorem of Serre \cite[(5) page 260]{SerreGL2}, for all but finitely many primes $\ell$, the group $G$ acts on $T_\ell(E)$ through $\GL(T_\ell(E))$. 
In particular, given such an $\ell$,  for any $\bZ_\ell$-basis $v,w$ of $T_\ell(E)$ there exists a $\sigma$ such that $v,w$ live in $\Lambda$. 
Hence, to conclude, it is enough to choose a suitable basis $v,w$ for which the matrix of $f$ is rational.  

Now $v,w$ form a $\bZ_\ell$ basis of $T_\ell(E)$ if and only if their reduction modulo $\ell$, denoted $\bar{v},\bar{w}$, form an $\mathbb{F}_\ell$-basis of $T_\ell(E)\otimes _{\bZ_\ell}\bF_\ell$.
For all but finitely many $\ell$ the determinant of $f$ is invertible modulo $\ell$. 
Moreover, the minimal polynomial of $f$ has rational coefficients and it is independent of $\ell$. By considering its discriminant we can choose moreover $\ell$ such that the minimal polynomial has distinct roots modulo $\ell$ (this is possible since $f$ does not lift hence it is not a multiple of the identity).
For such a choice of $\ell$, pick  any vector $v$  such that $\bar{v}$ is not an eigenvector of $f$ modulo $\ell$.
Then $v,f(v)$ form a $\bZ_\ell$ basis of $T_\ell(E)$.
With respect to this basis, $f$ acts as a companion matrix therefore it has rational coefficients.
\end{proof}

\begin{proposition} Let $E$ be an elliptic curve with CM and $p$ be a prime of supersingular reduction. 
Let $D$ be the endomorphism algebra of $E$ modulo   $p$. 
Then there exist an $\ell$ and a $\sigma$ such that the action of $D$ on $H^1_\ell(E)$  is through matrices whose entries are algebraic with respect to  the rational structure coming from singular cohomology. 

In particular, for such an $\ell$ and $\sigma$ and for the motive $M=\mathfrak{h}^1(E) \otimes \mathfrak{h}^1(E)^{\vee}$, \cref{Conj:GPCSell} has a negative answer.
\end{proposition}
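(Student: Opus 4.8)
The plan is to split the proof into two parts: first produce a pair $(\ell,\sigma)$ for which the whole algebra $D$ acts on $\rH^1_\ell(E)$ by matrices with algebraic entries relative to a $\bQ$-basis of $\rH^1_B(E)$, and then deduce the negative answer to \cref{Conj:GPCSell} by a dimension count. For the first part I would begin with a harmless reduction: enlarging the base field, one may assume $E$ is defined over a number field $K\supseteq L$ with $\End_K(E)=\cO_L$, where $L$ is the CM field; this changes neither the supersingularity of $E_p$ nor the algebra $D=\End^0(E_p)$, a quaternion algebra over $\bQ$ ramified exactly at $p$ and $\infty$. Fix once and for all a prime $\ell$ that splits in $L$, is of good reduction for $E$, and is distinct from $p$ (all but finitely many split primes qualify). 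Then $\rH^1_\ell(E)\simeq\rH^1_\ell(E_p)\simeq\rH^1_B(E)\otimes\bQl$ by Artin comparison and smooth proper base change, $D$ acts $\bQl$-linearly on $\rH^1_\ell(E_p)$, and $D\otimes_\bQ\bQl\simeq M_2(\bQl)$ since the quaternion algebra splits away from $p$ and $\infty$. The $L$-action on $\rH^1_B(E)$ inherited from $\End_K(E)$ in characteristic zero is $\bQ$-rational and compatible with the $\ell$-adic realisation.

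Next I would write down an \emph{algebraic model} of the $D$-action. Decompose $D=L\oplus Lj$ with $j^2=\delta\in\bQ^\times$ and $jaj^{-1}=\bar a$ for $a\in L$. Because $\ell$ splits in $L$, the $L\otimes\bQl$-module $\rH^1_\ell(E)$ splits into eigenlines $W^+\oplus W^-$; since the eigenvalues of $L$ are its two embeddings into $\obQ$, these eigenlines are defined over $L\subset\obQ$ with respect to $\rH^1_B(E)$. As $j$ conjugates the $L$-action into its complex conjugate, it exchanges $W^+$ and $W^-$. Picking an $L$-eigenvector $w^+\in W^+$ with algebraic coordinates and setting $w^-:=jw^+$, one gets a $\bQl$-basis $(w^+,w^-)$ of $\rH^1_\ell(E)$ in which $j=\bigl(\begin{smallmatrix}0&\delta\\1&0\end{smallmatrix}\bigr)$ and $L$ acts diagonally through its two (algebraic) embeddings; so in this basis $D$ already acts by algebraic matrices. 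The only obstruction to algebraicity in a $\bQ$-basis of $\rH^1_B(E)$ is that $w^-=jw^+$ equals $\gamma\cdot w^-_0$ for some algebraic eigenvector $w^-_0$ of $W^-$ and some scalar $\gamma\in\bQl^\times$ which is a priori transcendental; the desired conclusion holds exactly when $\gamma$ can be taken in $\obQ^\times$.

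The core of the argument is then to absorb $\gamma$ by varying $\sigma$, and this is the step I expect to need the most care. Replacing $\sigma$ by $\sigma\circ g$ for $g\in G_K$ changes the comparison isomorphism $\rH^1_B(E)\otimes\bQl\simeq\rH^1_\ell(E)$ by $\rho_\ell(g)$, which, by the theory of complex multiplication (\cite{SerreGL2}), lies in $(L\otimes\bQl)^\times$ and acts on $W^\pm$ by scalars $\chi_\pm(g)\in\bQl^\times$; hence it multiplies $\gamma$ by $\chi_+(g)/\chi_-(g)$. It therefore suffices to show $\gamma\in\obQ^\times\cdot(\chi_+/\chi_-)(G_K)$. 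For this I would use that at a place $v$ of $K$ above $\ell$ — where $E$ has ordinary good reduction, $\ell$ being split in $L$ — one of $\chi_+,\chi_-$ is unramified while the other restricts on the inertia group $I_v$ to (a finite-index power of) the Lubin--Tate character of the formal group of the reduction, whose image is an open subgroup of $\bZ_\ell^\times$; hence $(\chi_+/\chi_-)(I_v)$ contains an open subgroup $U\subseteq\bZ_\ell^\times$. Since $\obQ^\times$ surjects onto $v_\ell(\bQl^\times)=\bZ$ (powers of $\ell$) and the rational integers prime to $\ell$ surject onto the finite group $\bZ_\ell^\times/U$, one gets $\obQ^\times\cdot U=\bQl^\times$, so $\gamma$ automatically lies in $\obQ^\times\cdot(\chi_+/\chi_-)(G_K)$. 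Choosing $g$ accordingly, and rescaling $w^+$ by an algebraic scalar, yields $\ell$ and $\sigma$ for which $D$ acts on $\rH^1_\ell(E)$ through $M_2(\obQ)$ with respect to a $\bQ$-basis of $\rH^1_B(E)$.

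Finally I would read off the consequence for \cref{Conj:GPCSell} with $M=\fh^1(E)\otimes\fh^1(E)^\vee=\cEnd(\fh^1(E))$. For the $(\ell,\sigma)$ just constructed, the entries of the period matrix $\mathcal P_{\ell,\sigma}(M)$ are the coordinates of the algebraic classes $Z_p(\spe(M))=D$ inside $R_B(M)\otimes\bQl=\End(\rH^1_B(E))\otimes\bQl$, hence they are algebraic and $\trdeg(\mathcal P_{\ell,\sigma}(M))=0$. On the other hand $\dim G_B(M)-\dim G_\ell(\spe(M))=1$: the group $G_B(M)$ is one-dimensional by the same computation used for $G_\dR(M)$ in \cref{Eg:H_M for E=CM}, whereas $\spe(M)=\cEnd(\fh^1(E_p))\simeq\one^{\oplus4}$ — the four-dimensional algebra $D$ consists of algebraic classes and $\sim_\ho=\sim_\num$ holds for $E_p$ — so $G_\ell(\spe(M))$ is trivial. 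Since $0<1$, the inequality in \cref{Conj:GPCSell} is strict for this $M,\ell,\sigma$, so the question has a negative answer. The main obstacle, as indicated, is the third paragraph: one must pin down, via the complex multiplication description of $\rho_\ell$, that $\chi_+/\chi_-$ hits an open subgroup of $\bZ_\ell^\times$, which is exactly what forces $\gamma$ into $\obQ^\times\cdot(\chi_+/\chi_-)(G_K)$ and lets the transcendental scalar be absorbed.
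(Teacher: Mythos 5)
Your proof is correct, and it follows the same two--step strategy as the paper (first exhibit a $\bQl$-basis of $\rH^1_\ell(E)$ in which all of $D$ acts by algebraic matrices, then use the largeness of the Galois image to rotate that basis into the singular rational structure by modifying $\sigma$ via $(\sigma\circ g)^*=\sigma^*\circ g^*$), but the two executions differ in an instructive way. The paper writes $D=\langle f,h\rangle$ with $f$ in the CM field and $h$ the non-liftable generator, chooses $\ell$ so that the minimal polynomial $T^2-h^2$ splits over $\bZ_\ell$, diagonalizes $h$, and takes the basis $(v,f(v))$ for an $h$-eigenvector $v$; since $f$ lifts to characteristic zero it preserves $\Lambda\otimes\bQ$, so the whole problem reduces to moving the single vector $v$ into $\Lambda$, which is done by quoting Deuring's theorem that $\Gal(K)$ acts through an open subgroup of $(F\otimes\bZ_\ell)^\times$ on the rank-one $F\otimes\bZ_\ell$-module $T_\ell(E)$. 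You instead diagonalize the CM field (choosing $\ell$ split in $L$, a different density-$1/2$ set of primes), take the basis $(w^+,jw^+)$, and observe that the only obstruction is a single scalar $\gamma\in\bQl^\times$ measuring the failure of $jw^+$ to be an algebraic multiple of the algebraic eigenvector of $W^-$; you then absorb $\gamma$ by showing $(\chi_+/\chi_-)(G_K)$ contains an open subgroup $U$ of $\bZ_\ell^\times$ via the local (ordinary/Lubin--Tate) analysis at a place above $\ell$, together with the clean observation that $\obQ^\times\cdot U=\bQl^\times$. What your route buys is a completely explicit bookkeeping of the transcendental defect as one scalar and a self-contained local proof of the openness you need (you could equally have quoted Deuring here, as the open image in $(L\otimes\bZ_\ell)^\times\simeq\bZ_\ell^\times\times\bZ_\ell^\times$ immediately gives open image of the ratio character); what the paper's route buys is brevity, at the cost of leaving implicit the density argument showing that the open orbit of $v$ meets $\Lambda\otimes\bQ$. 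Your concluding dimension count ($\trdeg=0$ versus $\dim G_B(M)-\dim G_\ell(\spe(M))=1-0=1$) is also correct and is more explicit than the paper, which leaves this step to the reader.
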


\begin{proof}
Write $F$ for the CM quadratic number field. 
	Write the quaternion algebra $D=\<f,h\>$ with $f\in F$ satisfying $f^2=b\in\bZ_{<0}$ and $h^2=a\in \bQ^\times\setminus \bQ^{\times 2}$ with the relation $fh=-hf$.
	
	Consider the action of $D$ on $T_\ell(E)$. 
	The minimal polynomial of $h$ is $T^2-a$. We know that for half of the primes $\ell$ this polynomial splits over $\bF_\ell$ and hence over $\bZ_\ell$.
	Let $\ell$ be such a prime and let $v\in T_\ell(E)$ be an eigenvector for $h$. 
	Due to the relation $fh=-hf$, the vector $f(v)$ 
is also an eigenvector for $h$ but for the other eigenvalue.
	In particular $v,f(v)$ is a $\bQ_\ell$ basis of $ T_\ell(E)_{\bQ_\ell}$.
	In this basis, the action of $D$ on $T_\ell(E)_{\bQ_\ell}$ is through the following matrices:
	\begin{align}
	f\mapsto \begin{pmatrix}
	0 & b\\
	1 & 0
	\end{pmatrix} & &
	h\mapsto \begin{pmatrix}
	\alpha & 0\\
	0 & -\alpha
	\end{pmatrix}
	\end{align}
	where $\alpha\in\bZ_\ell$ is a square root of $a$, hence an algebraic number.
	
	We now claim that there exists $\sigma$ (and a suitable $\ell$) such that $v$ lives inside  the lattice of singular cohomology $\Lambda$ (and hence $f(v)$ does as well).
	This will conclude the proof as 	the algebra $D$ will act on $T_\ell(E)$ with respect to a basis of $\Lambda$ through algebraic numbers (more precisely, numbers in $\bQ[\sqrt a])$.

	To show the claim, we argue as in the proof of \cref{prop:SerrenonCM}. 
	Fix a $\sigma$ and consider the identification $T_\ell(E)=\Lambda \otimes_{\bZ} \bZ_\ell$. 
	We want to modify this $\sigma$ to obtain the one with the desired property using the group $G=\Gal(K)$ and the relation \eqref{change sigma}.
	
	Recall that  $T_\ell(E)$ is a free module of rank one over $F\otimes \bZ_\ell$ and that, moreover,
  for all but finitely many $\ell$, the group $G=\Gal(K)$ acts on $T_\ell(E)$ through $(F\otimes \bZ_\ell)^\times$ (this is due to Deuring; see \cite[\S 4.5]{SerreGL2} and the references therein).
  For such an $\ell$, there will be an element of $G$ that sends the vector $v$ to an element of $\Lambda$.
\end{proof}

\begin{remark}\label{finalrmk}
	In conclusion, despite some positive results, it does not seem reasonable to expect the existence of an $\ell$-adic  analogue of the Grothendieck period conjecture, or at least one independent of the embedding $\sigma: \obQ \hookrightarrow \bC$ as in Questions  \ref{Conj:GPCWell} and \ref{Conj:GPCSell}. Following the spirit of \cite[Proposition 2]{AndreBetti}, it seems reasonable to expect that for a generic choice of $\sigma$ these questions should have positive anwser and that they should be consequences of  the Mumford--Tate conjecture.
\end{remark}


\def\cprime{$'$}

\Addresses

\end{document}